\newtheorem{theorem}{Theorem}[section]
\newtheorem{lemma}{Lemma}[section]
\newtheorem{remark}{Remark}[section]
\newtheorem{corollary}{Corollary}[section]
\definecolor{yscol}{HTML}{6622AA}
\definecolor{jbcol}{HTML}{A51535}
\definecolor{lzcol}{HTML}{0000ff}
\date{\today}
\begin{document}

\title[Higher-order Far-field Boundary Conditions]{Higher-order Far-field Boundary Conditions \\
for Crystalline Defects}

\author{Julian Braun}
\address{Heriot-Watt University, Edinburgh, EH14 4AS, UK}
\email{j.braun@hw.ac.uk}
\author{Christoph Ortner}
\address{University of British Columbia, 1984 Mathematics Road, Vancouver, BC, Canada.}
\email{ortner@math.ubc.ca}
\author{Yangshuai Wang}
\address{University of British Columbia, 1984 Mathematics Road, Vancouver, BC, Canada.}
\email{yswang2021@math.ubc.ca}
\author{Lei Zhang}
\address{School of Mathematical Sciences, Institute of Natural Sciences and MOE-LSC, Shanghai Jiao Tong University, Shanghai 200240, China.}
\email{lzhang2012@sjtu.edu.cn}


\begin{abstract}
Crystalline materials exhibit long-range elastic fields due to the presence of defects, leading to significant domain size effects in atomistic simulations. 
A rigorous far-field expansion of these long-range fields identifies low-rank structure in the form of a sum of discrete multipole terms and continuum correctors~\cite{2021-defectexpansion}. We propose a novel numerical scheme that exploits this low-rank structure to accelerate material defect simulations by minimizing the domain size effects.
Our approach iteratively improves the boundary condition, systematically following the asymptotic expansion of the far field. We provide both rigorous error estimates for the method and a range of empirical numerical tests, to assess it's convergence and robustness. 
%
\end{abstract}

\maketitle


\renewcommand\arraystretch{1.5}


\newcommand{\G}{\mathcal{G}}

\def\<{\langle}
\def\>{\rangle}

\newcommand{\R}{\mathbb{R}}

\newcommand{\Rc}{\ensuremath{\mathcal{R}}}
\newcommand{\Rcc}{\ensuremath{\overline{\mathcal{R}}}}
\newcommand{\La}{\Lambda}

\def\HH{\mathcal{H}^{1}}
\def\HHc{\mathcal{H}^{\rm c}}
\def\Ladef{\La^{\rm def}}
\def\Rdef{R^{\rm def}}

\def\CC{{\bf C}}
\def\UU{{\bf U}}
\def\Gr{\mathcal{G}}

\def\mA{\mathsf{A}}
\def\Div{{\rm Div}}
\newcommand{\Q}{\mathbb{Q}}
\newcommand{\Z}{\mathbb{Z}}
\newcommand{\C}{\mathbb{C}}
\newcommand{\N}{\mathbb{N}}
\newcommand{\M}{\mathbb{M}}
\newcommand{\wto}{\rightharpoonup}
\newcommand{\wsto}{\stackrel{*}{\rightharpoonup}}
\newcommand{\css}{\subset\subset}
\newcommand{\eps}{\varepsilon}

\def\dt{\,{\textrm{d}}t}
\def\ds{\,{\textrm{d}}s}
\def\dsigma{\,{\textrm{d}}\sigma}

\section{Introduction}
\label{sec:intro}
This work is concerned with the precise characterization of geometric and energetic properties of defects within crystalline materials~\cite{groger2008multiscale, horstemeyer2009multiscale, steinhauser2017computational, yip2007handbook}. 
Practical simulation schemes operate within finite domains and hence cannot fully resolve the long-ranged elastic far-field, hence the computation of these properties requires the careful consideration of artificial (e.g., clamped or periodic) boundary conditions~\cite{2018-uniform, EOS2016}. 
The choice of boundary condition significantly influences the emergence of cell-size effects, an issue that will be studied in detail in this paper. 

Standard supercell methods for modeling defect equilibration are recognized for their relatively slow convergence concerning cell size~\cite{chen19, EOS2016}. Addressing this limitation systematically necessitates the development of {\em higher-order boundary conditions}, aiming to improve the  convergence rates in terms of cell size. An important step in this endeavor is the careful analysis of the elastic far-fields induced by the presence of defects.

A common approach to characterize the elastic far-field behavior is to leverage the low-rank structure of defect configurations. This involves modeling defects using continuum linear elasticity and the defect dipole tensor, first introduced in \cite{eshelby1956continuum, nowick1963anelasticity}. This concept was later applied to atomistic models of fracture \cite{sinclair1975influence, sinclair1972atomistic}. More recent related works utilize lattice Green's functions to improve accuracy in defect computations~\cite{tan2016computation, trinkle2008lattice}. However, these contributions tend to be application-focused, lacking a rigorous mathematical foundation and framework for systematically designing and improving the models and numerical algorithms.

Recently, Braun {\it et al.}~\cite{2017-bcscrew, 2021-defectexpansion} developed a unified mathematical framework that exploits the low-rank defect structure to characterize the elastic far-fields. In this framework, the defect equilibrium is decomposed into a sum of continuum correctors and discrete multipole terms. This novel formulation exposes avenues for improved convergence rates in cell problems concerning cell size, as demonstrated theoretically. However, a notable challenge arises in the practical implementation of multipole expansions for simulating crystalline defects, given that the terms associated with multipole moments are defined on an infinite lattice, rendering their direct evaluation unfeasible within finite computational domains. 

The purpose of the present paper is to propose a novel numerical framework that utilizes the multipole expansions of Braun {\it et al.}~\cite{2017-bcscrew, 2021-defectexpansion} to accelerate the simulation of crystalline defects. 
We design an iterative method that systematically enhances the accuracy of approximate multipole tensor evaluations, accompanied by rigorous error estimates.
Additionally, we leverage a continuous version of multipole expansions, employing continuous Green's functions to further enhance computational efficiency. To evaluate the effectiveness of our approach, we present numerical examples for a range of point defects, assessing both the geometry error and energy error convergence. Our numerical results demonstrate that the proposed framework for higher-order boundary conditions effectively achieves accelerated convergence rates with respect to computational cell size.

This paper concentrates on point defects to provide a comprehensive understanding of key concepts. Future research will delve into broader applications of this approach. However, extending the method to complex scenarios like edge dislocations, cracks or grain boundary structures poses fundamental challenges beyond the scope of the current analysis, necessitating also the development of new theory.

\subsection*{Outline} 
The paper is organized as follows: In Section~\ref{sec:pre}, we provide an overview of the variational formulation for the equilibration of crystalline defects and review the result on multipole expansions (cf.~\cite[Theorem 3.1]{2021-defectexpansion}) that provides a general characterization of the discrete elastic far-fields surrounding point defects. In Section~\ref{sec:moments}, we propose a numerical framework that leverage the multipole expansions to accelerate the simulation of crystalline defects. We utilize continuous multipole expansions instead of discrete ones to obtain an efficient implementation. In Section~\ref{sec:numexp}, we apply our main algorithm (cf. Algorithm~\ref{alg:moment_iter_a}) to various prototypical numerical examples of point defects. Section~\ref{sec:conclusion} presents a summary of our work and future directions. The proofs as well as additional analysis that can aid in understanding the main idea of this paper, are given in Section~\ref{sec:proofs}. 

\subsection*{Notation}

We use the symbol $\langle\cdot,\cdot\rangle$ to denote an abstract duality
pairing between a Banach space and its dual. The symbol $|\cdot|$ normally
denotes the Euclidean or Frobenius norm, while $\|\cdot\|$ denotes an operator
norm. We denote $A\backslash\{a\}$ by
$A\backslash a$, and $\{b-a~\vert ~b\in A\}$ by $A-a$. For $E \in C^2(X)$, the first and second variations are denoted by
$\<\delta E(u), v\>$ and $\<\delta^2 E(u) v, w\>$ for $u,v,w\in X$.

We write $|A| \lesssim B$ if there exists a constant $C$ such that $|A|\leq CB$, where $C$ may change from one line of an estimate to the next. When estimating rates of decay or convergence, $C$ will always remain independent of the system size, the configuration of the lattice and the the test functions. The dependence of $C$ will be normally clear from the context or stated explicitly. 

Given a $k$-tuple of vectors in $\R^d$, $\boldsymbol{\sigma}=(\sigma^{(1)}, \dots, \sigma^{(k)} ) \in (\R^d)^k$. We write the $k$-fold product $\boldsymbol{\sigma}^\otimes \in (\R^d)^{\otimes k}$ as
\[
\boldsymbol{\sigma}^\otimes := \bigotimes_{i=1}^k \sigma^{(i)} := \sigma^{(1)} \otimes \cdots \otimes \sigma^{(k)}.
\]
Similarly, we write $v^{\otimes k} := v \otimes ... \otimes v \in (\R^d)^{\otimes k}$ for $v \in \R^d$.

For any $\boldsymbol{\sigma}\in(\R^d)^k$, we define the symmetric tensor product by \[\boldsymbol{\sigma}^\odot:=  \sigma^{(1)} \odot \cdots \odot \sigma^{(k)} := {\rm sym~} \boldsymbol{\sigma}^\otimes := \frac{1}{k!} \sum_{g \in S_k} g(\boldsymbol{\sigma})^\otimes, \]
where $S_k$ is the usual symmetric group of all permutations acting on the integers $\{1,\ldots,k\}$ and $g(\boldsymbol{\sigma}) := (\sigma^{(g(1))}, \dots, \sigma^{(g(k))} )$ for any $g \in S_k$ and $\boldsymbol{\sigma} \in (\R^d)^k$. 

The natural scalar product on $(\R^d)^{\otimes k}$ is denoted by $A : B$ for $A,B \in (\R^d)^{\otimes k}$, which is defined to be the linear extension of
 \[\boldsymbol{\sigma}^\otimes : \boldsymbol{\rho}^\otimes := \prod_{i=1}^k \sigma^{(i)} \cdot \rho^{(i)} .\]

For two second-order tensors $\CC, \UU \in (\R^{A})^{\otimes k}$, given specifically as a sum $\CC = \sum_{\boldsymbol{\rho} \in A^{k}}  \CC_{\boldsymbol{\rho}} E_{\boldsymbol{\rho}}$ with $E_{\boldsymbol{\rho}}$ the natural basis of the space ${\R^d}^k$, we can then write 
\[
\CC \colon \UU = \sum_{\boldsymbol{\rho}\in A^k} \CC_{\boldsymbol{\rho}} \UU_{\boldsymbol{\rho}}.
\]

\section{Background: Equilibrium of crystalline defects and its multipole expansions}
\label{sec:pre}
Our work concerns the modeling of crystalline defects, with particular emphasis on single point defects embedded within a homogeneous crystalline bulk, a setting that allows a detailed and rigorous analysis of our approach. 
To motivate the formulation of our main results in this context, we first review and adapt the framework introduced in~\cite{chen19, EOS2016, 2014-dislift} in Section~\ref{sec:sub:model}. Subsequently, in Section~\ref{sec:sub:theory}, we provide a brief summary of the multipole expansion of equilibrium configurations proposed in~\cite{2021-defectexpansion}, which serves as the cornerstone for this work.

\subsection{Equilibrium of crystalline defects}
\label{sec:sub:model}

Let $d\in\{2,3\}$ be the dimension of the system. A homogeneous crystal reference configuration is given by the Bravais lattice $\La=\mathsf{A}\Z^d$, for some non-singular matrix $\mathsf{A} \in \mathbb{R}^{d \times d}$. We admit only single-species Bravais lattices. There are no conceptual obstacles to generalising our work to multi-lattices, however, the technical details become more involved. The reference configuration with defects is a set $\Ladef \subset \R^d$. The mismatch between $\Ladef$ and $\La$ represents possible defected configurations. We assume that the defect cores are localized, that is, there exists $R^{\rm def}>0$, such that $\Lambda^{\rm def} \backslash B_{R^{\rm def}} = \Lambda \backslash B_{R^{\rm def}}$. We refer to Figure \ref{figs:lattice} for a two dimensional example with $\mA$ defined by \cite[Eq.~(4.3)]{ortner2023framework} specifying a triangular lattice.

\begin{figure}[!htb]
\begin{center}
	\includegraphics[height=4.5cm]{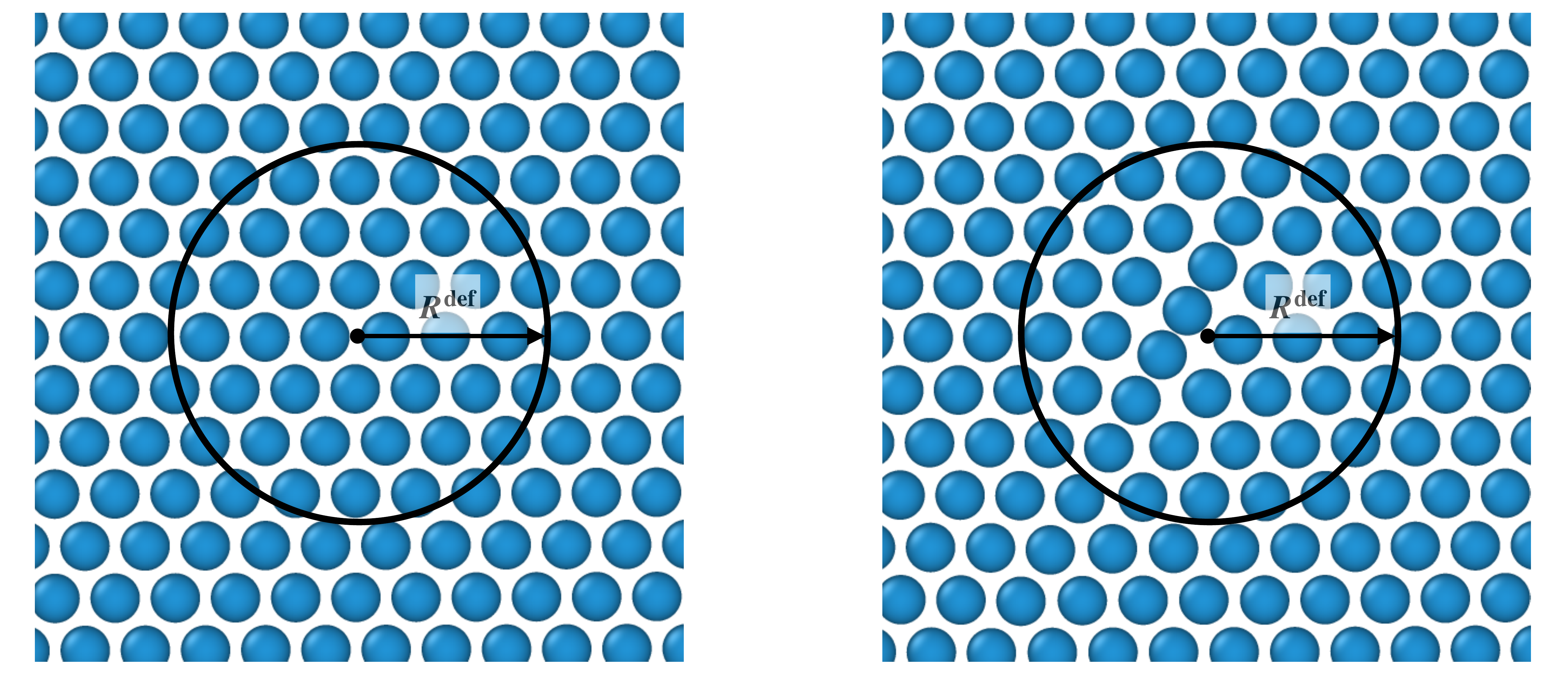}\qquad
	\caption{2D triangular lattice: Reference lattice $\Lambda$ (left); Defective lattice $\Ladef$ with one self-interstitial atom inside $B_{R^{\rm def}}$ (right).}
	\label{figs:lattice}
\end{center}
\end{figure}

The displacement of the infinite lattice $\Lambda$ is a map $u \colon \La \to \R^N$. Typically, $N = d$, but both $N < d$ and $N > d$ arise e.g. in dimension-reduced models. For $\ell, \rho \in \La$, we denote discrete differences by $D_\rho u (\ell) := u(\ell + \rho) - u(\ell)$. To introduce higher discrete differences we denote by $D_{\boldsymbol{\rho}} = D_{\rho_1} \cdots D_{\rho_j}$ for a $\boldsymbol{\rho} = (\rho_1, ..., \rho_j) \in \La^j$. For a subset $\Rc_{\ell} \subset \Lambda-\ell$, we
define $Du(\ell) := D_{\Rc_{\ell}} u(\ell) := \big(D_{\rho} u(\ell)\big)_{\rho\in\Rc}$. For the sake of simplicity, we assume throughout that $\Rc_{\ell}$ is {\em finite} for each $\ell\in\La$. An extension to infinite interaction range is possible but involves additional technical complexities~\cite{chen19, chen17}. 

We define two useful discrete energy space by 
\begin{align*}
  \HH(\Ladef) :=&~ 
  \big\{ 
    u : \Ladef \to \R^N ~\big|~ \|Du\|_{\ell^2} < \infty
  \big\}, \nonumber \\
  \HHc(\Ladef) :=&~ 
    \big\{ 
      u : \Ladef \to \R^N ~\big|~ {\rm supp}(Du) \text{~bounded~}
    \big\},
\end{align*}
where $\HHc$ is a dense subspace of $\HH$ with compact support. Analogously, $\HH(\La)$ and $\HHc(\La)$ can be defined for displacements on homogeneous lattice.

We consider the site potential to be a collection of mappings $V_{\ell}:(\R^N)^{\Rc_\ell}\rightarrow\R$, which represent the energy distributed to each atomic site. We make the following assumption on regularity and symmetry:
$V_\ell \in C^K((\R^N)^{\Rc_\ell})$ for some $K$ and $V_\ell$ is homogeneous outside the defect region $B_{R^{\rm def}}$, namely, $V_\ell = V$ and $\Rc_\ell = \Rc$ for $\ell \in \Lambda \setminus B_{R^{\rm def}}$. Furthermore, $V$ and $\Rc$ have the following point symmetry: $\Rc = -\Rc$, it spans the lattice $\textrm{span}_\Z \Rc= \La$, and $V(\{-A_{-\rho}\}_{\rho\in\Rc}) = V(A)$. We refer to~\cite[\S 2.3 and \S 4]{chen19} for a detailed discussion of those assumptions and symmetry.

The potential energy under the displacements field are given by
\begin{align}\label{eq:E}
   \mathcal{E}^{\rm def}(u) := \sum_{\ell \in \Ladef} \Big[V_\ell \big(D_{\mathcal{R}_\ell} u(\ell)\big) - V_\ell\big({\bf 0}\big)\Big], \qquad
   \mathcal{E}(u) := \sum_{\ell \in \La} \Big[ V \big(D u(\ell)\big) - V\big({\bf 0}\big) \Big].
\end{align}
It is shown in \cite[Lemma 1]{EOS2016} that $\mathcal{E}^{\rm def}$ (resp. $\mathcal{E}$) is well defined on $\HHc(\Ladef)$ (resp. $\HHc(\La)$) and has a unique continuous extension to $\HH(\Ladef)$ (resp. $\HH(\La)$). 

The main objective in this work is the characterisation of the far-field behaviour of lattice displacements $u : \La \to \R^N$ that are close to equilibrium. Following the works~\cite{2021-defectexpansion, EOS2016}, it is important to characterise the linearised residual forces $f(\ell) := H[u](\ell)$, where
\begin{equation}\label{eq:Hdefinition}
 H[u](\ell) := -\Div \big( \nabla^2 V({\bf 0})[Du]\big),
\end{equation}
with $\Div~A = -\sum_{\rho \in \Rc} D_{-\rho} A_{\cdot \rho}$ the discrete divergence for a matrix field $A \colon \La \to \R^{N\times \Rc}$. Given $i\in\N$, if $\ell \mapsto H[u](\ell) \otimes \ell^{\otimes i} \in \ell^1(\La)$, we define the $i$-th force moment 
\begin{equation}\label{eq:results:defn_Ij}
  \mathcal{I}_i[u] = \sum_{\ell \in \La} H [u](\ell) \otimes \ell^{\otimes i},
\end{equation}
which serves as a key concept in the following analysis and algorithms.

We assume throughout that the Hamiltonian $H=\delta^2 \mathcal{E}(\bf{0})$ is stable (cf.~\cite[Eq.~(4)]{2021-defectexpansion}). For stable operator $H$ there exists a {\it lattice Green's function} (inverse of $H$) $\mathcal{G}:\Lambda \rightarrow \R^{N\times N}$ such that
\begin{eqnarray}\label{eq:def_lattice_G}
    H[\mathcal{G}e_k](\ell) = e_k \delta_{\ell, 0}, \qquad \textrm{for}~1 \leq k \leq N. 
\end{eqnarray}
We write $\mathcal{G}_k:=\mathcal{G}e_k$ for simplicity.

The equilibrium displacement $\bar{u}^{\rm def} \in \HH(\Ladef)$ satisfies
\begin{equation}
  \label{eq:equil_point}
   \delta \mathcal{E}^{\rm def}(\bar{u}^{\rm def})[v] = 0 \qquad \forall v \in \HHc(\Ladef).
\end{equation}
For the purpose of the following analysis, it is advantageous to project $\bar{u}^{\rm def}$ onto the homogeneous lattice $\Lambda$ denoted by $\bar{u}$. The possible projections are not unique and will not influence the main results; we therefore refer to \cite[Section 3.1]{2021-defectexpansion} for details.

It was shown~\cite{chen19, EOS2016} that the equilibrium displacement has a generic decay $|D\bar{u}^{\rm def}(\ell)| \leq C |\ell|^{-d}$, which gives rise to a slow convergence of standard supercell methods with cell size~\cite{chen19, EOS2016}. The multipole expansion we introduce next gives additional information about the equilibrium far field and allows us to construct improved boundary conditions.

\subsection{Multipole expansion of equilibrium fields}
\label{sec:sub:theory}
In this section, we briefly review the results on the multipole expansion of the equilibrium $\bar{u}$ for point defects~\cite{2021-defectexpansion}, which provides the general structure for characterising the discrete elastic far-fields induced by defects.

Since $\bar{u} = \bar{u}^{\rm def}$ outside of the defect core, for $\lvert \ell \rvert$ large enough, we can obtain
\[
   \delta \mathcal{E}(\bar{u})(\ell) = \delta \mathcal{E}(\bar{u})[\delta_\ell] = 0,
\]
where $\delta_\ell(\ell') := \delta_{\ell\ell'}$. As a matter of fact, it is shown in \cite{EOS2016} that
\[
   \delta \mathcal{E}(\bar{u})[v] = (g, Dv)_{\ell^2} \qquad \forall v \in \HH(\La),
\]
where $g : \La \to \R^{d \times \Rc}$ with ${\rm supp}(g) \subset B_{\Rdef}$. 
%
The following Theorem taken from~\cite[Theorem 3.1]{2021-defectexpansion} is provided here for the sake of completeness.

\begin{theorem}\label{thm:pointdef}
   Choose $p \geq 0, J \geq 0$ and suppose that $V \in C^{K}(\R^{d \times  \Rc})$, such that 
   $K \geq J+2+ \max \{0,\lfloor \frac{p-1}{d} \rfloor\}$. Let $g : \La \to \R^{d \times \Rc}$ with compact support, and let
    $\bar{u} \in \HH(\La)$ such that
   \[
      \delta \mathcal{E}(\bar{u})[v] = ( g, D v )_{\ell^2}
      \quad \forall v \in \HHc(\La).
   \]
   Furthermore, let $\mathcal{S} \subset \Lambda$ be linearly independent with ${\rm span}_{\mathbb{Z}}\mathcal{S}=\Lambda$ and $\mathcal{G}:\La \rightarrow \R^{N\times N}$ be a lattice Green's function defined by \eqref{eq:def_lattice_G}. Then, there exist $u_i^{C} \in C^\infty$ and coefficients $b_{\rm exact}^{(i,k)}\in (\R^{\mathcal{S}})^{\odot i}$ such that 
\begin{eqnarray}\label{eq:exp_b}
\bar{u} = \sum_{i=d+1}^p u_i^C + \sum_{i=1}^{p} \sum_{k=1}^N b_{\rm exact}^{(i,k)} : D_{\rm \mathcal{S}}^i \mathcal{G}_k + r_{p+1},
\end{eqnarray}
where $u_i^C$ satisfies the PDEs in \cite[Eq.(59)]{2021-defectexpansion} for $d+1\leq i\leq p$ while $u_i^C=0$ for $0 \leq i \leq d$. Furthermore, for $j= 1, \dots,J$, the remainder term $r_{p+1}$ satisfies the estimate
   \begin{eqnarray}\label{eq:maindecay}
  \lvert D^j r_{p+1} \rvert \lesssim \lvert\ell\rvert^{1-d-j-p}\log^{p+1} (\lvert\ell\rvert).
   \end{eqnarray}
\end{theorem}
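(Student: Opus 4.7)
The result is quoted verbatim from \cite[Theorem~3.1]{2021-defectexpansion}, so the plan is to sketch how one would reproduce that argument using the tools already introduced.

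The starting point is the equilibrium equation $\delta \mathcal{E}(\bar u)[v] = (g,Dv)_{\ell^2}$ together with the a priori decay $|D\bar u(\ell)|\lesssim |\ell|^{-d}$ recalled at the end of Section~\ref{sec:sub:model}. First I would split the nonlinear equilibrium operator into its linearisation and the nonlinear remainder, writing $\delta \mathcal{E}(\bar u) = H\bar u + \delta N(\bar u)$ where $H=\delta^2\mathcal{E}(\mathbf{0})$. Using the defining property of the Hamiltonian $H$ in \eqref{eq:Hdefinition} this recasts the problem as a discrete Poisson-type equation
\begin{equation*}
H\bar u(\ell) \;=\; f(\ell), \qquad f := \Div g - \delta N(\bar u),
\end{equation*}
where the pointwise decay of $\delta N(\bar u)$ is controlled by $|D\bar u|^2$, hence by $|\ell|^{-2d}$ to leading order. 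This is what unlocks the iterative improvement: each substitution of an improved approximation of $\bar u$ into $N$ yields a force with faster decay, generating the expansion to one higher order.

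The second step is to represent $\bar u$ as the convolution $\bar u = \mathcal{G}*f$ using the lattice Green's function from \eqref{eq:def_lattice_G}, and then Taylor expand the kernel in the source variable. Concretely, since $\mathcal{S}\subset\La$ is an integer basis, any lattice translate can be written as an iterated discrete shift along $\mathcal{S}$, and a finite Taylor-type identity gives
\begin{equation*}
\mathcal{G}(\ell-m) \;=\; \sum_{i=0}^{p} \frac{(-1)^i}{i!}\, m^{\odot i}\colon D_{\mathcal{S}}^i \mathcal{G}(\ell) \;+\; R_p(\ell,m),
\end{equation*}
where $R_p$ is controlled by higher discrete derivatives of $\mathcal{G}$ evaluated on the segment from $\ell$ to $\ell-m$. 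Summing against $f$ groups the principal terms into exactly the multipole form $b_{\rm exact}^{(i,k)}\colon D_{\mathcal{S}}^i \mathcal{G}_k$, with coefficients given by moments $\sum_m f(m)\otimes m^{\otimes i}$ (symmetrised to land in $(\R^{\mathcal{S}})^{\odot i}$), which are finite under the decay of $f$ obtained above; these are precisely the force moments $\mathcal{I}_i$ from \eqref{eq:results:defn_Ij}.

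The third step is to show that the continuum correctors $u_i^C$ account for the fact that, from order $i=d+1$ onwards, the force $f$ produced by the nonlinearity is no longer summable against $|m|^i$ and the naive moment diverges. Here I would pass to the continuum limit of $H$ via Cauchy--Born, matching $D_{\mathcal{S}}^i \mathcal{G}$ to derivatives of the continuum elastic Green's function, and identify $u_i^C$ as the solution of the PDE hierarchy referenced in \cite[Eq.~(59)]{2021-defectexpansion} with source obtained from lower-order terms in the expansion. The iteration proceeds order by order: at each order $i$, subtracting the already identified multipole and continuum pieces from $\bar u$ leaves a residual that is a solution of the same type of equation with a source decaying one order faster; stability of $H$ (through the decay estimates on $\nabla^j\mathcal{G}$) converts this into the pointwise bound $|D^j r_{p+1}(\ell)|\lesssim |\ell|^{1-d-j-p}$, with the logarithmic factor $\log^{p+1}|\ell|$ arising precisely at the critical orders where the discrete moment sums diverge only logarithmically and must be truncated.

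The main obstacle is exactly this last point: tracking the interplay between the discrete multipole corrections and the continuum correctors $u_i^C$ through the inductive hierarchy, ensuring at each step that (i) the coefficients $b_{\rm exact}^{(i,k)}$ remain well defined despite the loss of summability of high-order moments, (ii) the continuum PDE sources are consistent with the subtracted discrete tails, and (iii) the accumulated logarithmic losses stay at the stated order $\log^{p+1}$ rather than compounding. Everything else (existence of $\mathcal{G}$, its derivative decay, summation by parts to move between $D^j u$ and $\Div$-type sources, and the $C^K$-regularity needed for the nonlinear Taylor expansions) follows from the standing assumptions and is essentially bookkeeping on top of the core induction.
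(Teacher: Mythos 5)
The paper gives no internal proof of this statement: it is imported verbatim from \cite[Theorem 3.1]{2021-defectexpansion}, so the ``paper's proof'' is simply that citation, and your proposal correctly recognises this. Your sketch --- linearising $\delta\mathcal{E}$ to recast the problem as $H\bar u = f$ with a quadratically small nonlinear residual, representing $\bar u$ via the lattice Green's function, Taylor-expanding the kernel to extract the multipole moments $\mathcal{I}_i$, and bootstrapping order by order with the continuum correctors $u_i^C$ absorbing the moments that cease to be summable beyond order $d$ --- is a faithful outline of how the cited reference actually argues, and you rightly flag the divergent high-order moments and the accounting of logarithmic losses as the genuinely delicate points rather than claiming to have dispatched them.
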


\begin{remark}\label{rem:pointdef}
Since $u_i^{C}=0$ for $0\leq i \leq d$, one can obtain the pure multipole expansion up to the order $p=d$
\begin{align} \label{eq:puremultipole}
\bar{u} = \sum_{i=1}^{d} \sum_{k=1}^N b_{\rm exact}^{(i,k)} : D_{\rm \mathcal{S}}^i \mathcal{G}_k + r_{d+1},
\ \text{where} \ 
\lvert D^j r_{d+1} \rvert \lesssim \lvert\ell\rvert^{1-2d-j} \log^{d+1} (\lvert\ell\rvert).
\end{align}
Moreover, as discussed in \cite[Remark 3.4]{2021-defectexpansion}, it is possible to reduce the number of logarithms in the estimate somewhat for all orders in \eqref{eq:maindecay} and \eqref{eq:puremultipole}. Despite this, we choose to include them as they do not have a significant impact on the core algorithms presented in this work.
\end{remark}

The foregoing theorem offers a framework for characterizing the discrete elastic far-fields encompassing crystalline point defects. This characterization relies on the decomposition~\eqref{eq:puremultipole}, with a particular focus on the multipole terms comprising the coefficients $b^{(i, k)}_{\rm exact}$ and the lattice Green's function $\mathcal{G}$. By determining these two crucial components, it becomes theoretically feasible to attain the desired regularity and decay of the remaining term $r_{d+1}$. This, in turn, facilitates the establishment of higher-order boundary conditions, which constitutes the primary objective of our present work.

As demonstrated in \cite[Lemma 5.6]{2021-defectexpansion}, the coefficients $b^{(i, k)}_{\rm exact}$ are theoretically obtainable through a linear transformation
\begin{eqnarray}\label{eq:bIrelation}
\big(\mathcal{I}_i(\bar{u})\big)_{\cdot k} = (-1)^i i! \sum_{\boldsymbol{\rho}\in \mathcal{S}^i} (b^{(i,k)}_{\rm exact})_{\boldsymbol{\rho}} \cdot \boldsymbol{\rho}^{\odot},
\end{eqnarray}
where the force moments are defined by \eqref{eq:results:defn_Ij}. For a detailed derivation of \eqref{eq:bIrelation} especially when $i=1,2,3$, we refer to Section~\ref{sec:sub:abIrelation} (cf.~\eqref{eq:bI}). It is worth noting, however, that both the force moments and the coefficients are defined globally, making them impractical to compute exactly. We will introduce an iterative procedure to approximate these quantities within a bounded domain, accompanied by a sharp estimate of the error committed by this truncation (cf.~Section~\ref{sec:sub:mominter}).

Regarding the lattice Green's function $\mathcal{G}$, it is important to note that the computational cost associated with $\mathcal{G}$, particularly its discrete higher-order derivatives, can be overwhelming. To establish an efficient numerical approach, we need to eliminate the necessity to work directly with $\mathcal{G}$. This is made feasible through a well-established connection between the continuous and lattice Green's functions, complete with rigorously controlled errors, as shown in \cite[Theorem 2.5 and Theorem 2.6]{2021-defectexpansion}. As a result, we can shift our focus away from handling the discrete coefficients $b^{(i,k)}_{\rm exact}$, and instead explore a pure continuous decomposition of $\bar{u}$ (cf.~\eqref{eq:exp_a}) using continuous Green's functions. Detailed construction will be provided in Section~\ref{sec:sub:coeffts}. 

\section{Moment Iterations and Continuous Computational Scheme}
\label{sec:moments}
A direct implication of Theorem~\ref{thm:pointdef} is the potential to accelerate standard cell approximations in solving the defect equilibration problem \eqref{eq:equil_point}, improving the relatively slow convergence with respect to the cell size in standard approaches~\cite{EOS2016}. In this section, we first revisit a conventional Galerkin approximation scheme for cell problems and point out its practical limitations. Following that, we introduce a numerical framework of integrating moment iterations and continuous Green's functions, which form the basis for implementing higher-order boundary conditions. 

\subsection{Accelerated convergence of cell problems}
\label{sec:sub:acc}
To apply the Galerkin approximation scheme, we define a family of restricted displacement spaces 
\begin{equation*}
   \mathcal{U}_R := \big\{  v : \Lambda \to \R^N ~\big|~
                    v(\ell) = 0 \text{ for $|\ell| > R$} \big\}, 
\end{equation*}
where atoms are clamped in their reference configurations outside a ball
with radius $R$. Then we can approximate \eqref{eq:equil_point} by the Galerkin projection as follows
\begin{equation}
   \label{eq:cellp:galerkin}
   \delta \mathcal{E}(\bar{u}_R)[v] = 0 \qquad \forall v \in \mathcal{U}_R,
\end{equation}
where $\bar{u}_R \in \mathcal{U}_R$. Under suitable stability conditions it is shown in \cite{EOS2016} that, for $R$ sufficiently large, a solution $\bar{u}_R$ exists that satisfies the explicit convergence rate  
\begin{equation} \label{eq:slow_convergence_cell}
   \| D\bar{u}_R - D\bar{u} \|_{\ell^2} \leq C R^{-d/2}.
\end{equation}
The rate is an immediate corollary of the decay estimate $|Dr_1(\ell)|\lesssim\lvert\ell\rvert^{-d}$ (let $p=0$ in \eqref{eq:maindecay} and ignore the logarithms). 

We introduce the following three steps to accelerate the cell problem \eqref{eq:cellp:galerkin}, motivated by the multiple expansions:
\begin{enumerate} 
  \item In general we replace the naive far-field {\it predictor}
   $\hat{u}_0=0$ with the higher-order continuum {\it predictor}
\[
  \hat{u}_p := \sum_{i = d+1}^p u_i^{\rm C},
\]
where $u_i^{\rm C}$ are given in Theorem~\ref{thm:pointdef}. (This step is skipped when $p \leq d$.)
   \item Then, the admissible {\it corrector} space is enlarged with the multipole moments 
  \begin{align}\label{eq:exact_space}
    \mathcal{U}_R^{(p)} := \bigg\{
        v : \Lambda \to \mathbb{R}^N \,\Big|\, & \,\,
        v = \hat{u}_p + \sum_{i = 1}^p \sum_{k = 1}^N b^{(i,k)} : D^i_{\mathcal{S}} \mathcal{G}_k
              + w, \nonumber \\[-1em] 
          & \text{for free coefficients $b^{(i,k)}$ and 
          $w \in \mathcal{U}_R$}
    \bigg\}.
  \end{align}
  That is, the {\it corrector} displacement is now parameterised by its values in 
  the computational domain $\Lambda \cap B_R$ and by the discrete coefficients $b^{(i,k)}$
  of the multipole terms.
  \item We consider the pure Galerkin approximation scheme: Find $\bar{u}_{p, R}\in\mathcal{U}^{(p)}_R$ such that 
\begin{equation} \label{eq:galerkin}
    \delta \mathcal{E}(\bar{u}_{p, R}) [v_R] = 0 \qquad \forall v_R \in   \{u - \hat{u}_p~|~u \in \mathcal{U}_R^{(p)}\}. 
\end{equation}
\end{enumerate}

Our numerical tests below are confined to the scenario where $p\leq d$ and thus $\hat{u}_p=0$. But for the sake of generality of our theorems we will still consider general $p$ which requires $\hat{u}_p$. This allows for broader applicability of our method in potential future work.

The following theorem provides the error estimates of the Galerkin approximation~\eqref{eq:galerkin} in both geometry and energy error, which integrates~\cite[Theorem 2]{EOS2016} with \cite[Theorem 3.7]{2021-defectexpansion}. The improved rate should be contrasted with the rate for a naive scheme \eqref{eq:slow_convergence_cell}. The proof is given in the Section~\ref{sec:sub:proofs3}. 

\begin{theorem}\label{th:galerkin}
  Suppose that $\bar{u}$ is a strongly stable solution of \eqref{eq:equil_point}; that is, there exists a stability constant $c_0 > 0$ such that 
  \[
      \delta^2 \mathcal{E}(\bar{u})[v,v] 
      \geq c_0 \| Dv \|^2_{\ell^2}, \qquad \forall v \in \HH(\Lambda),
  \]
  then, for $R$ sufficiently large, there exists a solution $\bar{u}_{p, R} \in \mathcal{U}_R^{(p)}$ to~\eqref{eq:galerkin} with $b^{(i,k)} = b^{(i,k)}_{\rm exact}$ and such that 
  \begin{align}
     \big\| D\bar{u} - D\bar{u}_{p,R} \big\|_{\ell^2}
     &\leq C_{\rm G} \cdot 
     R^{- d/2 - p} \cdot \log^{p+1}(R), \\
     \big|\mathcal{E}(\bar{u})-\mathcal{E}(\bar{u}_{p, R})\big| &\leq C_{\rm E} \cdot 
     R^{- d - 2p} \cdot \log^{2p+2}(R).
  \end{align}
\end{theorem}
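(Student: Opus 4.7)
The plan is to follow the standard Galerkin framework for nonlinear variational problems and combine it with the sharper approximation theory provided by Theorem~\ref{thm:pointdef}. First I would establish the existence and quasi-best approximation property of the Galerkin solution: under the strong stability assumption $\delta^2\mathcal{E}(\bar u)[v,v] \geq c_0 \|Dv\|_{\ell^2}^2$ and a Lipschitz bound on $\delta^2\mathcal{E}$ near $\bar u$, a standard inverse-function-theorem argument (as in the proof of \cite[Theorem 2]{EOS2016}) yields, for all sufficiently large $R$, the existence of $\bar u_{p,R} \in \mathcal{U}_R^{(p)}$ solving \eqref{eq:galerkin}, together with a quasi-optimal error estimate of the form
\begin{equation*}
\|D\bar u - D\bar u_{p,R}\|_{\ell^2} \;\leq\; C \inf_{v \in \mathcal{U}_R^{(p)}} \|D\bar u - Dv\|_{\ell^2}.
\end{equation*}
The only subtlety at this stage is that $\mathcal{U}_R^{(p)}$ is an affine space shifted by $\hat u_p$ and augmented with the finite-dimensional multipole block, but the argument transfers verbatim once $\{D^i_{\mathcal S} \mathcal{G}_k\}$ is included as extra admissible directions.

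Next, I would construct an explicit good candidate $v \in \mathcal{U}_R^{(p)}$ by reading off the multipole expansion \eqref{eq:exp_b}. Choosing $b^{(i,k)} := b^{(i,k)}_{\rm exact}$ and $w := \eta r_{p+1}$, where $\eta$ is a smooth cutoff equal to $1$ on $B_{R/2}$, supported in $B_R$, with $|\nabla \eta| \lesssim R^{-1}$, one has
\begin{equation*}
\bar u - v \;=\; (1-\eta) \, r_{p+1},
\end{equation*}
so that $|D(\bar u - v)(\ell)| \lesssim |Dr_{p+1}(\ell)| + R^{-1} |r_{p+1}(\ell)| \mathbf{1}_{R/2 \leq |\ell| \leq R}$. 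The decay bound \eqref{eq:maindecay} with $j=1$ then gives $|Dr_{p+1}(\ell)| \lesssim |\ell|^{-d-p} \log^{p+1}|\ell|$, and integrating (summing) the square of this tail over $|\ell|>R/2$ produces
\begin{equation*}
\|D\bar u - Dv\|_{\ell^2}^2 \;\lesssim\; \sum_{|\ell|>R/2} |\ell|^{-2d-2p} \log^{2p+2}|\ell| \;\lesssim\; R^{-d-2p}\log^{2p+2}(R),
\end{equation*}
which, combined with the quasi-optimality, delivers the first estimate in the theorem.

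For the energy error I would use the usual Galerkin-orthogonality upgrade. A second-order Taylor expansion around $\bar u$ gives
\begin{equation*}
\mathcal{E}(\bar u_{p,R}) - \mathcal{E}(\bar u) \;=\; \langle \delta \mathcal{E}(\bar u), \bar u_{p,R} - \bar u\rangle + \tfrac{1}{2} \langle \delta^2\mathcal{E}(\bar u)(\bar u_{p,R}-\bar u), \bar u_{p,R}-\bar u\rangle + O\!\left(\|D(\bar u_{p,R}-\bar u)\|_{\ell^2}^3\right).
\end{equation*}
Since $\delta \mathcal E(\bar u)$ is equivalent to a compactly supported force on $\Lambda$, and since $\bar u_{p,R}-\bar u$ belongs (modulo the fixed predictor $\hat u_p$) to the corrector space for $R$ large enough to contain this support, one can insert the interpolant $v$ constructed above via Galerkin orthogonality, $\delta\mathcal{E}(\bar u_{p,R})[v - \bar u_{p,R}]=0$, to cancel the linear term in the usual way. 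What remains is controlled by $\|D(\bar u - \bar u_{p,R})\|_{\ell^2}^2$, yielding the squared rate $R^{-d-2p}\log^{2p+2}(R)$.

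The main obstacle I anticipate is the energy-error analysis rather than the geometry part: one must verify that Galerkin orthogonality applies cleanly \emph{across the multipole block}, i.e., that the admissible test space used in \eqref{eq:galerkin} contains both the truncated remainder $w$ and the multipole directions $D^i_{\mathcal S}\mathcal G_k$, so that the quadratic form involving $\bar u_{p,R} - \bar u = (\bar u_{p,R} - v) + (v - \bar u)$ can be split without picking up uncontrolled boundary contributions from the non-decaying Green's-function tails. Handling the logarithmic factors accumulated through \eqref{eq:maindecay} when squaring the geometry error, and making sure the cubic remainder $\|D(\bar u_{p,R}-\bar u)\|_{\ell^2}^3$ is dominated by the quadratic term (which requires $R$ sufficiently large depending on $p$), are the only other technical points; both are routine once the quasi-optimality is in hand.
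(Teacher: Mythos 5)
Your proposal is correct and follows essentially the same route as the paper: the geometry estimate is exactly the content of \cite[Theorem 3.7]{2021-defectexpansion}, which the paper simply cites and whose proof (quasi-best approximation plus the cutoff interpolant $v$ with $\bar u - v = (1-\eta)r_{p+1}$ and the tail sum of \eqref{eq:maindecay}) you have reconstructed faithfully. For the energy error the paper uses the slightly more compact first-order integral remainder $\mathcal{E}(\bar u)-\mathcal{E}(\bar u_{p,R})=\int_0^1\langle\delta\mathcal{E}((1-s)\bar u+s\bar u_{p,R}),\bar u-\bar u_{p,R}\rangle\,ds$ together with the Lipschitz continuity of $\delta\mathcal{E}$ and the equilibrium condition, rather than your second-order Taylor expansion with Galerkin orthogonality, but both arguments reduce to the same quadratic bound $|\mathcal{E}(\bar u)-\mathcal{E}(\bar u_{p,R})|\lesssim\|D\bar u-D\bar u_{p,R}\|_{\ell^2}^2$.
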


The foregoing theorem is an important theoretical milestone, showcasing the accelerated convergence of cell problems that can in principle be achieved. However, the implementation of the scheme \eqref{eq:galerkin} is not feasible directly as the $b^{(i,k)}$ affect $v$ on the entire lattice and not just on a finite domain. The exact multipole tensors $b_{\rm exact}^{(i, \cdot)}$ can also not be computed trivially in practice (cf.~Section~\ref{sec:sub:theory}). In the next section, we will approximate $b^{(i,k)}_{\rm exact}$ within a bounded domain, accompanied by a sharp error estimate for this truncation.

\subsection{Multipole moment approximations}
\label{sec:sub:calc_moments}
We modify our computational scheme from a pure Galerkin approximation \eqref{eq:galerkin} to one that involves evaluating the multipole tensors within a finite domain. To achieve this, we impose a constraint by fixing the multipole tensors in the corrector space in advance leading to the approximate corrector space:
\begin{align}\label{eq:approx_space}
    \mathcal{U}_{b,R}^{(p)} := \bigg\{
        v : \Lambda \to \mathbb{R}^N \,\bigg|\, \,\,
        v = \hat{u}_p + \sum_{i = 1}^p \sum_{k = 1}^N b^{(i,k)} : D^i_{\mathcal{S}} \mathcal{G}_k
              + w, \quad 
          \text{for $b$ fixed and $w \in \mathcal{U}_R$ }
    \bigg\}.
  \end{align}
  Compared to \eqref{eq:exact_space}, the only difference is that we fix the moment tensor $b = (b^{(i,k)})_{i, k}$. This makes it possible to implement the scheme as now all degrees of freedom and computations are in a finite domain. The choice for $b^{(i,k)}$ should of course be an approximation $b^{(i,k)}_{\rm exact}$.

  We consider the corresponding Galerkin approximation scheme: Find $\bar{u}_{b, R} \in \mathcal{U}^{(p)}_{b, R}$ such that
\begin{equation} \label{eq:galerkin_approx}
    \delta \mathcal{E}(\bar{u}_{b, R}) [v_{R}] = 0 \qquad \forall v_{R} \in \mathcal{U}_{R}. 
\end{equation}

For future reference, let $b$ and $b_{\rm exact}$ be the collections of tensors $b^{(i,k)}$ and $b^{(i,k)}_{\rm exact}$ for all possible pairs $(i,k)$.  By considering a solution within the space $\mathcal{U}_{b,R}^{(p)}$ for a given $b$, we obtain a specific variation of Theorem~\ref{th:galerkin}. The proof will be given in the Section~\ref{sec:sub:proofs3}. 

\begin{lemma}\label{th:galerkinfixedb}
  Suppose that $\bar{u}$ is a strongly stable solution of \eqref{eq:equil_point}. Then, for $R$ sufficiently large, there exists a solution $\bar{u}_{b,R} \in \mathcal{U}_{b,R}^{(p)}$ to \eqref{eq:galerkin_approx} such that  
  \begin{eqnarray}\label{eq:ubR}
     \big\| D\bar{u} - D\bar{u}_{b,R} \big\|_{\ell^2}
     \lesssim R^{- d/2 - p} \cdot \log^{p+1}(R) + \sum_{i=1}^p \lvert b^{(i,\cdot)} - b_{\rm exact}^{(i,\cdot)} \rvert \cdot R^{1-d/2-i}.
  \end{eqnarray}
\end{lemma}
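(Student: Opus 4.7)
My plan is to mimic the inverse-function-theorem argument behind Theorem~\ref{th:galerkin}, adapted to the fact that the multipole tensors $b$ are now fixed rather than optimised over. The key observation is that Theorem~\ref{thm:pointdef} gives the exact expansion
\[
   \bar{u} = \hat{u}_p + \sum_{i=1}^p \sum_{k=1}^N b_{\rm exact}^{(i,k)} : D^i_{\mathcal{S}} \mathcal{G}_k + r_{p+1},
\]
so that $\bar{u}$ can be rewritten with the prescribed $b$ in place of $b_{\rm exact}$ at the price of an extra remainder
\[
   \tilde{r} := \sum_{i=1}^p \sum_{k=1}^N \bigl(b_{\rm exact}^{(i,k)} - b^{(i,k)}\bigr) : D^i_{\mathcal{S}} \mathcal{G}_k + r_{p+1}.
\]
Proving the lemma then reduces to producing a trial function in $\mathcal{U}_{b,R}^{(p)}$ that approximates $\bar{u}$ up to the size of $\tilde{r}$ after truncation to $B_R$, and transferring this best-approximation estimate to the Galerkin solution via stability.

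\textbf{Trial function and best-approximation estimate.} I would define
\[
   u_{\rm trial} := \hat{u}_p + \sum_{i=1}^p \sum_{k=1}^N b^{(i,k)} : D^i_{\mathcal{S}} \mathcal{G}_k + T_R \tilde{r},
\]
where $T_R$ is a nodal truncation supported in $B_R$, so that $u_{\rm trial} \in \mathcal{U}_{b,R}^{(p)}$. The error $\|D\bar{u} - Du_{\rm trial}\|_{\ell^2}$ splits into two contributions. The $r_{p+1}$ part is controlled by the decay \eqref{eq:maindecay} exactly as in the proof of Theorem~\ref{th:galerkin}, yielding the rate $R^{-d/2-p}\log^{p+1}(R)$. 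The multipole-mismatch part is controlled via the standard discrete decay estimate $|D^{i+1}\mathcal{G}(\ell)| \lesssim |\ell|^{1-d-i}$, producing for each $i$ the tail bound
\[
   \Bigl(\sum_{|\ell|>R}|\ell|^{2(1-d-i)}\Bigr)^{1/2} \lesssim R^{1-d/2-i},
\]
and hence the summand $|b^{(i,\cdot)}-b_{\rm exact}^{(i,\cdot)}|\cdot R^{1-d/2-i}$ after multiplying by the tensor norm.

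\textbf{Inverse function theorem and main obstacle.} With $u_{\rm trial}$ in hand, I would apply an inverse-function-theorem argument (as in \cite{EOS2016, 2021-defectexpansion}) to the residual map $w \mapsto \delta\mathcal{E}(u_{\rm trial}+w)$ on $w \in \mathcal{U}_R$. Strong stability of $\bar{u}$ transfers to uniform coercivity of $\delta^2\mathcal{E}(u_{\rm trial})$ on $\mathcal{U}_R$ once $\|D\bar{u}-Du_{\rm trial}\|_{\ell^2}$ is sufficiently small, while local Lipschitz continuity of $\delta\mathcal{E}$ around $\bar{u}$ bounds the consistency residual $\|\delta\mathcal{E}(u_{\rm trial})\|_{\mathcal{U}_R^{*}}$ by the same quantity. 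The inverse function theorem then produces $\bar{u}_{b,R}$ solving \eqref{eq:galerkin_approx} together with the quasi-best-approximation bound, which gives \eqref{eq:ubR}. The main technical obstacle I anticipate is making the stability neighbourhood and Lipschitz constants uniform in $b$: this implicitly requires $|b-b_{\rm exact}|$ to be sufficiently small so that $u_{\rm trial}$ remains in a fixed neighbourhood of $\bar{u}$ for all admissible $b$, a smallness condition that is mild and consistent with the regime in which the lemma will be applied.
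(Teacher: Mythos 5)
Your argument is correct, but it follows a genuinely different route from the paper's. The paper proves the lemma by a triangle inequality against the solution $\bar{u}_{p,R}$ of the exact-coefficient Galerkin problem \eqref{eq:galerkin}: the term $\|D\bar{u}-D\bar{u}_{p,R}\|_{\ell^2}$ is handled by simply citing Theorem~\ref{th:galerkin}, and the term $\|D\bar{u}_{p,R}-D\bar{u}_{b,R}\|_{\ell^2}$ is bounded by the coefficient mismatch times the $\ell^2$-tail of the relevant discrete Green's function derivatives over $|\ell|>R$, giving exactly your factor $R^{1-d/2-i}$. You instead rebuild the quasi-best-approximation argument from scratch: add and subtract the exact multipole expansion of Theorem~\ref{thm:pointdef}, truncate the combined remainder $\tilde r$ to obtain a trial function in $\mathcal{U}_{b,R}^{(p)}$, and invoke the inverse function theorem for existence and the error transfer. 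The two approaches use the same two ingredients (the decay \eqref{eq:maindecay} of $r_{p+1}$ and the decay $|D^{i+1}\mathcal{G}(\ell)|\lesssim|\ell|^{1-d-i}$), but your version is self-contained and makes the existence of $\bar{u}_{b,R}$ and the required smallness explicit, whereas the paper's version is shorter because it treats Theorem~\ref{th:galerkin} as a black box and leaves the comparison of the two finite-dimensional correctors largely implicit. Your closing caveat about uniformity in $b$ is well placed but harmless here: since $1-d/2-i<0$ for all $i\geq 1$ and $d\geq 2$, the mismatch contribution vanishes as $R\to\infty$ for any fixed $b$, so ``$R$ sufficiently large'' (with the threshold depending on $|b-b_{\rm exact}|$) already puts the trial function in the stability neighbourhood.
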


Compared to the error estimates shown in Theorem~\ref{th:galerkin}, it is worth noting that an additional term arises in \eqref{eq:ubR} as a consequence of introducing an approximation for the moment tensor. Leveraging the linear transformation~\eqref{eq:bIrelation}, our approach shifts the focus towards evaluating the force moments $\mathcal{I}_i$ instead of $b^{(i, \cdot)}$.

We first introduce a truncation operator following the constructions in~\cite{EOS2016, 2014-dislift}. Let $\eta_R: \Lambda\rightarrow\R$ be a smooth cut-off function such that $\eta_R=1$ for $\lvert \ell \rvert \leq R/3$, $\eta_R =0$ for $\lvert \ell \rvert > 2R/3$ and $\lvert \nabla^j \eta_R \rvert \leq C_j R^{-j}$ for $0\leq j\leq3$. Then, we define the truncated force moments \eqref{eq:results:defn_Ij} by 
\begin{equation} \label{eq:defn_IjR}
  \mathcal{I}_{i,R}[u] := \sum_{\ell \in \La} \big(H [u](\ell) \otimes \ell^{\otimes i}\big)\cdot \eta_R(\ell).
\end{equation}

We introduce the following theorem, offering a qualitative estimate of the moment error stemming from truncation. For the sake of simplicity of presentation, we provide a sketch of the proof here, reserving the technical details for Section~\ref{sec:sub:proofs3}.

\begin{theorem}\label{thm:moments}
For $1\leq i\leq p$, let $\mathcal{I}_{i}$ and $\mathcal{I}_{i, R}$ be the exact and approximate $i$-th moment defined by \eqref{eq:results:defn_Ij} and \eqref{eq:defn_IjR}, respectively. Under the conditions of Theorem~\ref{th:galerkinfixedb}, we have 
\begin{equation}\label{eq:momenterror}
    \big\lvert \mathcal{I}_{i,R}[\bar{u}_{b, R}] - \mathcal{I}_{i}[\bar{u}] \big\rvert \lesssim R^{i-1-p} \cdot \log^p(R) + R^{i-1} \cdot \lVert D\bar{u}_{b, R} - D\bar{u} \rVert_{\ell^2}^2 + \alpha_i(R) \cdot \lVert D\bar{u}_{b, R} - D\bar{u} \rVert_{\ell^2},
\end{equation}
where the factors $\alpha_i(R)$ for $i=1,\ldots,p$ are given by
\[
\alpha_i(R) := \begin{cases} 
          1 & i<1+d/2, \\
          \log(R) & i=1+d/2, \\
          R^{i-1-d/2} & i>1+d/2. 
       \end{cases}
\]
\end{theorem}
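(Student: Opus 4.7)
The plan is to split the total error into a truncation contribution (from replacing $\ell^{\otimes i}$ by $\ell^{\otimes i}\eta_R(\ell)$) and a Galerkin-approximation contribution (from replacing $\bar u$ by $\bar u_{b,R}$). Setting $e := \bar u_{b,R}-\bar u$ and using linearity of $H$,
\[
\mathcal{I}_{i,R}[\bar u_{b,R}] - \mathcal{I}_i[\bar u] \;=\; \mathcal{I}_{i,R}[e] \;+\; \bigl(\mathcal{I}_{i,R}[\bar u] - \mathcal{I}_i[\bar u]\bigr).
\]

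To estimate the truncation term, I would note that $\mathcal{I}_{i,R}[\bar u] - \mathcal{I}_i[\bar u] = -\sum_\ell H[\bar u](\ell)\otimes\ell^{\otimes i}(1-\eta_R(\ell))$ is supported on $\{|\ell|\gtrsim R\}$. The nonlinear Euler--Lagrange relation $\delta\mathcal{E}(\bar u)[\delta_\ell]=0$ for $|\ell|>R^{\rm def}$, combined with Taylor expansion of $\nabla V$ about the reference configuration, rewrites as $H[\bar u](\ell) = \Div\bigl(Q(D\bar u)\bigr)(\ell)$ with $Q = O(|\cdot|^2)$; hence $|H[\bar u](\ell)|\lesssim|D\bar u(\ell)|\,|D^2\bar u(\ell)|$. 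Inserting the decay estimates from Theorem~\ref{thm:pointdef} (with the continuum correctors $u_i^C$ used quantitatively to improve the decay when $p>d$) yields $|H[\bar u](\ell)|\lesssim|\ell|^{-d-p-1}\log^p|\ell|$, and summing $|\ell|^i\cdot|\ell|^{-d-p-1}\log^p|\ell|$ over $|\ell|\gtrsim R$ produces the $R^{i-1-p}\log^p(R)$ contribution.

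For the Galerkin term, introduce the test function $\phi_R(\ell) := \ell^{\otimes i}\eta_R(\ell)$. Since $\mathrm{supp}(\phi_R)\subset B_{2R/3}\subset B_R$, each scalar component of $\phi_R$ is admissible in the Galerkin equation~\eqref{eq:galerkin_approx}, and a summation by parts converts $\mathcal{I}_{i,R}[e]$ into $\delta^2\mathcal{E}({\bf 0})[e,\phi_R]$. Since both $\bar u$ and $\bar u_{b,R}$ satisfy the same variational identity when tested against elements of $\mathcal{U}_R$ (the Galerkin scheme inheriting the same compactly supported defect-force contribution as $\bar u$), the fundamental theorem of calculus gives $\int_0^1\delta^2\mathcal{E}(\bar u+te)[e,\phi_R]\,dt = 0$, and Taylor expansion of $\delta^2\mathcal{E}(\bar u+te)$ around ${\bf 0}$ recasts $\mathcal{I}_{i,R}[e]$ as minus the sum of a linear-in-$\bar u$ correction (with leading part $\delta^3\mathcal{E}({\bf 0})[\bar u,e,\phi_R]$) and a quadratic-in-$e$ Taylor remainder.

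Finally, the correction terms are controlled using the elementary bound $|D\phi_R(\ell)|\lesssim|\ell|^{i-1}\leq R^{i-1}$ on the support of $\phi_R$. The quadratic-in-$e$ remainder satisfies $\sum_\ell|De|^2|D\phi_R| \leq R^{i-1}\|De\|_{\ell^2}^2$, producing the middle term in~\eqref{eq:momenterror}. For the linear correction, Cauchy--Schwarz combined with the pointwise estimate $|D\bar u(\ell)|\lesssim|\ell|^{-d}$ (from Theorem~\ref{thm:pointdef}) yields
\[
\bigl|\delta^3\mathcal{E}({\bf 0})[\bar u,e,\phi_R]\bigr| \;\lesssim\; \|De\|_{\ell^2}\Bigl(\sum_{|\ell|\lesssim R}|\ell|^{2i-2-2d}\Bigr)^{1/2},
\]
and evaluating the inner sum via $\int_0^R r^{2i-3-d}\,dr$ recovers exactly the three regimes defining $\alpha_i(R)$ according to the sign of $2i-2-d$. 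The principal technical obstacle is the truncation estimate when $p>d$: there the naive nonlinear bound yields only $|H[\bar u]|\lesssim|\ell|^{-2d-1}$, and the continuum correctors from the multipole expansion must be exploited quantitatively to recover the sharper $p$-dependent decay; for $p\leq d$ (which covers the numerical experiments), the nonlinear bound already suffices to match the stated rate.
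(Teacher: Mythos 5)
Your proposal is correct and follows essentially the same route as the paper: the identical splitting into a truncation error $\mathcal{I}_{i,R}[\bar u]-\mathcal{I}_i[\bar u]$ (bounded via the decay of the nonlinear residual $H[\bar u](\ell)$, which as you note requires the continuum correctors only when $p>d$) plus a difference term, which the paper likewise reduces by partial summation to $\sum_{|\ell|\le 2R/3}|\ell|^{i-1}\,|De|\,(|\ell|^{-d}+|De|)$ and then to the $R^{i-1}\|De\|_{\ell^2}^2$ and $\alpha_i(R)\|De\|_{\ell^2}$ terms. The only cosmetic difference is bookkeeping: the paper writes $H[\bar u_{b,R}]-H[\bar u]$ as an integral of $\nabla^3 V$ along the two segments and applies a pointwise Lipschitz bound, whereas you test the variational identity with $\phi_R=\ell^{\otimes i}\eta_R$ and Taylor-expand $\delta^2\mathcal{E}$ about $\mathbf{0}$; both yield the same estimates.
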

\begin{proof}[Sketch of the proof]
We split the target moment error into two parts, that is, 
\[
\big\lvert \mathcal{I}_{i,R}[\bar{u}_{b, R}] - \mathcal{I}_{i}[\bar{u}] \big\rvert \leq \big\lvert \mathcal{I}_{i,R}[\bar{u}] - \mathcal{I}_{i}[\bar{u}] \big\rvert + \big\lvert \mathcal{I}_{i,R}[\bar{u}_{b, R}] - \mathcal{I}_{i, R}[\bar{u}] \big\rvert,
\]
where the first part originates from the truncation of moments that can be directly bounded by $\big\lvert \mathcal{I}_{i,R}[\bar{u}] - \mathcal{I}_{i}[\bar{u}] \big\rvert \lesssim R^{i-p-1}\cdot \log^{p}(R)$. For the second part, it arises from the difference between $\bar{u}$ and $\bar{u}_b$ measured in the energy norm, which can be estimated by comparing their linearised residual forces. The detailed proof will be given in Section~\ref{sec:sub:proofs3}.
\end{proof}

Theorem~\ref{thm:moments} plays a pivotal role in this work, as it offers a qualitatively sharp estimate of the approximation of moment tensors arising from the finite domain truncation. It furnishes a rigorous analysis pertaining to the defect dipole tensor (with the specific case of $i=2$), often referred to as the elastic dipole tensor. The significance of this analysis is underscored by its enduring relevance within the realm of defect physics~\cite{dudarev2018elastic, nazarov2016first}. Notably, our result is naturally extensible to encompass higher-order multipole tensors, including tripole ($i=3$) and quadrupole ($i=4$) tensors.

More importantly, it establishes the dependence of moment errors on the corresponding geometry error, with a specific convergence rate in terms of computational cell size. This observation serves as a motivation for achieving accelerated convergence of cell problems in practical implementation. 

\subsection{The moment iteration}
\label{sec:sub:mominter}
In light of the estimate~\eqref{eq:momenterror}, our objective is to refine the accuracy of the approximation for $b_{\rm exact}$ by incorporating the evolving approximated solution $\bar{u}_{b}$. This results in an iterative refinement process, where we initiate with $b=0$ and systematically improve the accuracy of $\lvert b^{(i,\cdot)} - b_{\rm exact}^{(i,\cdot)} \rvert$ by continually updating the current-stage approximated solution $\bar{u}_b$. 

As the iterative process unfolds, the dominant term indicated in~\eqref{eq:momenterror} gradually evolves into $R^{i-1-p} \log^{p}(R)$. This, in turn, results in the primary term within~\eqref{eq:ubR} being $R^{- d/2 - p} \log^{p+1}(R)$, which achieves the desired convergence rate as stated in Theorem~\ref{thm:pointdef}. The subsequent algorithm illustrates this step-by-step refinement procedure.

\begin{algorithm}[H]
\caption{Moment iteration}
\label{alg:moment_iter}
\hskip-8.0cm {\bf Prescribe} $p, d, m=0, b^{(i, \cdot)}_0=0$ for all $1\leq i\leq p$. 

\begin{algorithmic}[1]
\REPEAT
    \STATE{ \textit{Evaluate}: Given $b_m^{(i, \cdot)}$, compute $\bar{u}_{b_m, R}$ and the resulting moments $\mathcal{I}_{i,R}[\bar{u}_{b_m, R}]$. } 
    \STATE{ \textit{Update}: Given $\mathcal{I}_{i,R}[\bar{u}_{b_{m, R}}]$, compute new coefficients $b_{m+1}^{(i,\cdot)}$ by \eqref{eq:bIrelation}. Apply Lemma~\ref{thm:moments} to estimate the accuracy of $b_{m+1}^{(i,\cdot)}$. Let $m=m+1$.}
\UNTIL{\begin{equation}\label{eq:stop}
    \lvert b_m^{(i,\cdot)} - b_{\rm exact}^{(i,\cdot)} \rvert = O\big(R^{i-1-p}\cdot \log^{p+1}(R)\big)\qquad \textrm{for all}~1\leq i\leq p.
\end{equation}} 
Let $M=m$. Save $\bar{u}_{b_M, R}$ and $b_M$ as a collection of $b_M^{(i, \cdot)}$ for all $i$.
\end{algorithmic}
\end{algorithm}

We note that the stopping condition is purely algebraic. The number of iterative steps $M$ in Algorithm~\ref{alg:moment_iter} can and should be determined {\it a priori} based on the estimates in Theorem \ref{th:galerkinfixedb} and Theorem \ref{thm:moments}. As a general upper bound, it is straightforward to see that the stopping criterion \eqref{eq:stop} can be fulfilled within a maximum of $p$ moment iterations. This implies that the number of iteration steps is bounded by $M \leq p$. 

Taking into account the estimates \eqref{eq:ubR} and \eqref{eq:momenterror}, the stopping criterion \eqref{eq:stop} ensures that
 \[
     \lVert D\bar{u} - D\bar{u}_{b_M, R} \rVert_{\ell^2}
     \leq C_{\rm G} \cdot
     R^{- d/2 - p} \cdot \log^{p+1}(R).
  \]
This is the optimal estimate of the geometry error for cell problems that can be obtained (cf. Theorem \ref{th:galerkinfixedb}). The corresponding convergence rate for energy error can be achieved analogously by applying the techniques used in proving Theorem \ref{th:galerkin}. We have the following Corollary, followed by a concrete example (Algorithm~\ref{alg:3d_example}) for demonstration.

\begin{corollary}\label{th:galerkinbM}
Under the conditions of Theorem \ref{th:galerkinfixedb}, if $b_{M}$ is a collection of $b^{(i,\cdot)}_{M}$ constructed by the Algorithm~\ref{alg:moment_iter} for all $1\leq i\leq p$, then for $R$ sufficiently large, there exists a corrector $\bar{u}_{b_M} \in \mathcal{U}_{b_M, R}^{(p)}$ the solution to \eqref{eq:galerkin} such that 
 \begin{align}
     \lVert D\bar{u} - D\bar{u}_{b_M, R} \rVert_{\ell^2}
     &\leq C_{\rm G} \cdot 
     R^{-d/2-p} \cdot \log^{p+1}(R), \qquad \text{and} \\
     \big|\mathcal{E}(\bar{u})-\mathcal{E}(\bar{u}_{b_M, R})\big| &\leq C_{\rm E} \cdot R^{-d-2p} \cdot \log^{2p+2}(R).
 \end{align}
\end{corollary}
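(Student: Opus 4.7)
The plan is to combine the per-iteration bounds from Lemma~\ref{th:galerkinfixedb} and Theorem~\ref{thm:moments} via a finite induction on the iteration counter $m$, verify that Algorithm~\ref{alg:moment_iter} terminates after at most $p$ steps with moment accuracy of the required order, and then read off the geometry bound directly from Lemma~\ref{th:galerkinfixedb}, while obtaining the energy bound by repeating the Taylor-plus-stability argument already used in the proof of Theorem~\ref{th:galerkin}.

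Concretely, introduce the shorthand $\epsilon_m^{(i)} := \lvert b_m^{(i,\cdot)} - b_{\rm exact}^{(i,\cdot)}\rvert$ and $E_m := \lVert D\bar{u} - D\bar{u}_{b_m,R}\rVert_{\ell^2}$. Lemma~\ref{th:galerkinfixedb} supplies the geometry-from-moments bound
\[
E_m \lesssim R^{-d/2-p}\log^{p+1}(R) + \sum_{i=1}^p \epsilon_m^{(i)}\, R^{1-d/2-i},
\]
while the linear transformation \eqref{eq:bIrelation} combined with Theorem~\ref{thm:moments} produces the moments-from-geometry recursion
\[
\epsilon_{m+1}^{(i)} \lesssim R^{i-1-p}\log^p(R) + R^{i-1} E_m^2 + \alpha_i(R)\, E_m.
\]
Starting from $b_0 = 0$, so that $\epsilon_0^{(i)} = \lvert b_{\rm exact}^{(i,\cdot)}\rvert = O(1)$, the first inequality yields $E_0 \lesssim R^{-d/2}$ (the $i=1$ contribution dominates). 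Substituting this back into the recursion, $\epsilon_1^{(i)}$ picks up an additional factor of roughly $R^{-d/2}$ compared with $\epsilon_0^{(i)}$, up to logarithms and the unavoidable floor $R^{i-1-p}\log^{p}(R)$. Iterating the two inequalities, the exponent of $R$ in each $\epsilon_m^{(i)}$ strictly decreases at every step until the floor is reached, and a direct counting argument shows that no more than $p$ iterations are needed to satisfy \eqref{eq:stop} for every $i\in\{1,\dots,p\}$, giving the asserted bound $M\leq p$.

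Once \eqref{eq:stop} is active, each cross term $\epsilon_M^{(i)} R^{1-d/2-i}$ is bounded by $R^{-d/2-p}\log^{p+1}(R)$, so Lemma~\ref{th:galerkinfixedb} immediately delivers the geometry estimate. For the energy bound, I would reuse the proof of Theorem~\ref{th:galerkin}: Taylor-expand $\mathcal{E}(\bar{u}_{b_M,R})$ around $\bar{u}$, kill the first-order contribution by using the compactly supported residual structure $\delta\mathcal{E}(\bar{u})[v] = (g,Dv)_{\ell^2}$ together with a best-approximation argument against the corrector space $\mathcal{U}_{b_M,R}^{(p)}$, and control the second-order term by strong stability. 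The standard conclusion $\lvert\mathcal{E}(\bar{u}) - \mathcal{E}(\bar{u}_{b_M,R})\rvert \lesssim \lVert D\bar{u} - D\bar{u}_{b_M,R}\rVert_{\ell^2}^2$ then squares the geometry rate into the advertised $R^{-d-2p}\log^{2p+2}(R)$.

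The main obstacle is the inductive bookkeeping in the middle step: one must verify carefully that neither the quadratic term $R^{i-1}E_m^2$ nor the case-dependent linear term $\alpha_i(R) E_m$ can stall the geometric decay before the floor $R^{i-1-p}\log^p(R)$ is reached, and this has to be checked separately for the three ranges $i<1+d/2$, $i=1+d/2$, $i>1+d/2$ and uniformly in $i\in\{1,\dots,p\}$. Once this case analysis is in hand, the rest of the argument amounts to a direct invocation of results already proved earlier in the paper.
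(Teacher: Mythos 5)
Your proposal is correct and follows essentially the same route as the paper: the paper also couples the geometry-from-moments bound of Lemma~\ref{th:galerkinfixedb} with the moments-from-geometry bound of Theorem~\ref{thm:moments} in an iteration started from $b_0=0$, notes that the stopping criterion \eqref{eq:stop} is reached in at most $p$ steps, plugs \eqref{eq:stop} into \eqref{eq:ubR} so that each term $\lvert b_M^{(i,\cdot)}-b_{\rm exact}^{(i,\cdot)}\rvert\,R^{1-d/2-i}$ is absorbed into $R^{-d/2-p}\log^{p+1}(R)$, and obtains the energy rate from the quadratic relation established in the proof of Theorem~\ref{th:galerkin}. The only difference is presentational: the paper carries out the inductive bookkeeping you describe only for the concrete case $d=p=3$ (Algorithm~\ref{alg:3d_example}) rather than uniformly in $i$ and $d$, so your explicit case analysis over the three regimes of $\alpha_i(R)$ is, if anything, a more complete version of the same argument.
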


We give a concrete example of Algorithm~\ref{alg:moment_iter} to demonstrate how to iteratively achieve the results shown in Corollary~\ref{th:galerkinbM}. We choose $d=3$ and $p=3$, in this case the predictor $\hat{u}_p=0$. We will extensively employ this particular case in the numerical experiments conducted in Section~\ref{sec:numexp}. The procedure can be summarized as follows:

\begin{algorithm}[H]
\caption{A 3D example for $p=3$}
\label{alg:3d_example}
\begin{algorithmic}[1]
\STATE Let $b_0^{(1,\cdot)}=b_0^{(2,\cdot)}=b_0^{(3,\cdot)}=0$. Compute the zeroth-order corrector $\bar{u}_{b_0, R} = \bar{u}_R$. Lemma \ref{th:galerkinfixedb} shows
    \[\lVert D\bar{u} - D\bar{u}_{b_0, R} \rVert_{\ell^2} \lesssim R^{-3/2}. \]
\vskip0.2cm
\STATE Evaluate $b^{(i,\cdot)}_1$ based on $\mathcal{I}_{i,R}[\bar{u}_{b_0, R}]$ for $i=1,2,3$ by \eqref{eq:bIrelation}. Theorem~\ref{thm:moments} gives 
    \begin{align*}
        \lvert b_1^{(1,\cdot)}-b_{\rm exact}^{(1,\cdot)} \rvert \lesssim R^{-3/2}, \quad
        \lvert b_1^{(2,\cdot)}-b_{\rm exact}^{(2,\cdot)} \rvert \lesssim R^{-3/2},
        \quad
        \lvert b_1^{(3,\cdot)}-b_{\rm exact}^{(3,\cdot)} \rvert \lesssim R^{-1} \cdot \log^4(R), 
    \end{align*}
    where the estimate for $b_1^{(3,\cdot)}$ has already obtained the desired accuracy.
    Compute the first-order corrector $\bar{u}_{b_1, R}$ and we have
    \[\lVert D\bar{u} - D\bar{u}_{b_1, R} \rVert_{\ell^2} \lesssim R^{-3}. \]
\vskip-0.8cm
\STATE Evaluate $b^{(i,\cdot)}_2$ based on $\mathcal{I}_{i,R}[\bar{u}_{b_1, R}]$ for $i=1,2,3$ by \eqref{eq:bIrelation}. Theorem~\ref{thm:moments} gives 
    \begin{align*}
        \lvert b_2^{(1,\cdot)}-b_{\rm exact}^{(1,\cdot)} \rvert \lesssim R^{-3} \cdot \log^4(R), \quad
        \lvert b_2^{(2,\cdot)}-b_{\rm exact}^{(2,\cdot)} \rvert \lesssim R^{-2} \cdot \log^4(R), \quad
        \lvert b_2^{(3,\cdot)}-b_{\rm exact}^{(3,\cdot)} \rvert \lesssim R^{-1} \cdot \log^4(R),
    \end{align*}
    where all of them have the desired accuracy.
    The stopping criterion \eqref{eq:stop} is therefore satisfied. Compute the second-order corrector $\bar{u}_{b_2, R}$. Lemma \ref{th:galerkinfixedb} gives
    \[\lVert D\bar{u} - D\bar{u}_{b_2, R} \rVert_{\ell^2} \lesssim R^{-9/2}\cdot\log^4(R). \]
Hence, the desired convergence rate of the geometry error for $d=3$, $p=3$ is achieved by applying two iterations ($M=2$). 
\end{algorithmic}
\end{algorithm}

\subsection{Continuous coefficients of multipole expansion}
\label{sec:sub:coeffts}

In the iterative moment scheme described previously, we have theoretically tackled the challenge of accelerating the convergence of cell problems. However, it is essential to note that in practical applications, the computational cost associated with obtaining higher-order correctors becomes prohibitively demanding. This computational burden primarily arises from the necessity to solve for discrete moment coefficients using lattice Green's functions (cf.~\eqref{eq:approx_space}).

As mentioned in Section~\ref{sec:sub:theory}, it is pragmatically advantageous to adopt a continuous reformulation of the multipole expansion for practical implementation. This strategic choice allows us to circumvent the complexities associated with discrete Green's functions and their discrete derivatives. Our primary focus in this section is thus directed toward the continuous Green's functions and their continuous derivatives.

We restrict the remainder of this section to the case $p = d = 3$, aligned with most application scenarios and consistent with our numerical experiments shown in Section~\ref{sec:numexp}. Similar procedures apply for other dimensionalities. We leverage the theoretical framework presented in~\cite{2021-defectexpansion}, which deduces higher-order continuum approximations of the discrete Green's function $\mathcal{G}$~\cite[Theorem 3]{2021-defectexpansion}. For $p = d = 3$, this framework only necessitates the continuum Green's function $G_0$ and the corrector $G_1$ to resolve the atomistic-continuum error. The precise definition and further computational details are provided in Appendix~\ref{sec:sub:gf}. Based on this, we can establish a purely continuous multipole expansion. Although the subsequent lemma has been previously introduced in~\cite[Theorem 4]{2021-defectexpansion}, we include it here for the sake of completeness.

\begin{lemma} \label{thm:structurewithmomentscont}
Under the conditions of Theorem~\ref{thm:pointdef}, there exist $a^{(i,n,k)} \in (\R^{d})^{\odot i}$ with $1\leq i,k \leq 3$ and $n=0,1$ such that 
\begin{align}\label{eq:exp_a}
\bar{u} = \sum_{k=1}^3 \Bigg( \sum^{3}_{i=1} a^{(i,0,k)}:\nabla^i (G_0)_{\cdot k} + a^{(1,1,k)}:\nabla (G_1)_{\cdot k}\Bigg) + w 
\end{align}
 and for $j=1,2$, $\alpha\in\N_0$, the remainder decays as 
\begin{equation} \label{eq:structurewithmomentscontremainder}
 \lvert D^j w \rvert \lesssim {\lvert\ell\rvert}^{-4-j}  \log^{\alpha +1}(\lvert\ell\rvert).
\end{equation}
\end{lemma}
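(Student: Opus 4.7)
The plan is to derive the continuous expansion \eqref{eq:exp_a} by starting from the discrete multipole expansion in Remark~\ref{rem:pointdef} (specialised to $p = d = 3$) and converting every occurrence of the lattice Green's function $\mathcal{G}_k$ and every discrete difference operator $D_{\mathcal{S}}^i$ into their continuum counterparts $G_0$, $G_1$ and $\nabla^i$, while carefully tracking which error contributions can be absorbed into the remainder $w$. The first step is to invoke \eqref{eq:puremultipole}:
\[
\bar{u} = \sum_{i=1}^{3}\sum_{k=1}^{3} b_{\rm exact}^{(i,k)} : D_{\mathcal{S}}^i \mathcal{G}_k + r_4,
\]
where $|D^j r_4|\lesssim |\ell|^{-5-j}\log^4(|\ell|)$; this remainder already satisfies the target decay \eqref{eq:structurewithmomentscontremainder} and can be placed directly into $w$. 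The remaining task is to rewrite each $D_{\mathcal{S}}^i \mathcal{G}_k$ in continuum form.

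Next I would apply the atomistic-to-continuum expansion of the lattice Green's function from \cite[Theorems~2.5, 2.6 and~3]{2021-defectexpansion}, which, schematically, provides $\mathcal{G}_k \approx (G_0)_{\cdot k} + (G_1)_{\cdot k}$ plus a remainder whose $j$-th derivative decays strictly faster than $|\ell|^{-4-j}\log^{\alpha+1}(|\ell|)$. I would substitute this into each $D_{\mathcal{S}}^i \mathcal{G}_k$ and then Taylor expand the discrete differences against the smooth functions $G_n$; for any smooth $G$,
\[
D_{\sigma}G(\ell) = \sigma\cdot\nabla G(\ell) + \tfrac{1}{2}\sigma^{\otimes 2}:\nabla^2 G(\ell) + \cdots,
\]
and iterating $i$ times expresses $D_{\mathcal{S}}^i (G_n)_{\cdot k}$ as a finite sum of terms $\sigma^{(1)}\odot\cdots\odot\sigma^{(m)}:\nabla^m (G_n)_{\cdot k}$ with $m\geq i$. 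Grouping contributions by the order $m$ of $\nabla$ and the corrector index $n$, the coefficients $a^{(i,n,k)}$ are defined as the symmetric combinations of the discrete moments $b_{\rm exact}^{(j,k)}$ inherited from this procedure, which automatically places them in $(\R^d)^{\odot i}$.

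The last step is the decay accounting. Using $|\nabla^m G_0(\ell)|\lesssim |\ell|^{-1-m}$ and $|\nabla^m G_1(\ell)|\lesssim |\ell|^{-2-m}$ (up to logarithms), one checks that the only continuum terms whose first derivative fails to satisfy the bound $|\ell|^{-5}\log^{\alpha+1}(|\ell|)$ are precisely $\nabla^i (G_0)_{\cdot k}$ for $i=1,2,3$ and $\nabla (G_1)_{\cdot k}$; these are therefore the ones that must be kept explicitly, while all terms of higher Taylor order, all $\nabla^m (G_1)_{\cdot k}$ with $m\geq 2$, and all Green's function and Taylor remainders meet \eqref{eq:structurewithmomentscontremainder} and are absorbed into $w$ alongside $r_4$.

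The main obstacle will be the bookkeeping across three simultaneous expansion indices (multipole order $i$, corrector index $n$, and Taylor order $m$), and in particular verifying sharply that each cross-term lands on the correct side of the cutoff. The delicate cases are showing that $D_{\mathcal{S}}^i(G_1)_{\cdot k}$ for $i\geq 2$ decays fast enough to be discarded, and that the Taylor remainders from the discrete-to-continuous conversion of $D_{\mathcal{S}}^i(G_0)_{\cdot k}$ at orders beyond $i=3$ also satisfy \eqref{eq:structurewithmomentscontremainder}; both require the refined pointwise estimates for $G_0$ and $G_1$ established in \cite{2021-defectexpansion}.
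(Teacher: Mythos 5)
Your proposal is correct and follows essentially the same route as the paper: the paper itself does not re-prove this lemma (it defers to \cite[Theorem 4]{2021-defectexpansion}), but the mechanism you describe --- substituting the atomistic-to-continuum expansion $\mathcal{G}\approx G_0+G_1+\text{remainder}$ into the discrete multipole expansion \eqref{eq:puremultipole} with $p=d=3$, Taylor-expanding the stencils $D^i_{\mathcal{S}}$ against the smooth $G_n$, and sorting terms by decay --- is exactly the computation the paper carries out in Section~\ref{sec:sub:abIrelation} to obtain \eqref{eq:a-b}. Your decay accounting (keeping $\nabla^i G_0$, $i=1,2,3$, and $\nabla G_1$, and absorbing $r_4$, the Green's-function remainder, $\nabla^m G_1$ for $m\geq 2$, and higher Taylor orders into $w$) matches the intended argument.
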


The lemma highlights the possibility of addressing the coefficients $a^{(i, n, k)}$ within continuous multipole expansions, as opposed to the conventional approach of solving for the coefficients $b^{(i, k)}$ in discrete multipole expansions (cf. Theorem~\ref{thm:pointdef}). Taking into account \eqref{eq:exp_b} with \eqref{eq:exp_a}, we can establish a specific relationship between $a^{(i, n, k)}$ and $b^{(i, k)}$ by utilizing the Taylor expansion of the discrete difference stencil. A detailed derivation of this relationship is provided in Section~\ref{sec:sub:abIrelation}. For $1\leq i,k \leq 3$ and $n=0,1$, the relation can be summarized as follows:
\begin{align}\label{eq:a-b}
    (a^{(1,0,k)})_{\cdot j} &= (a^{(1,1,k)})_{\cdot j} = \sum_{\rho \in \mathcal{R}} (b^{(1,k)})_\rho \cdot \rho_j, \nonumber \\[1ex]
    (a^{(2,0,k)})_{\cdot jm} &= \sum_{\rho, \sigma \in \mathcal{R}} (b^{(2,k)})_{\rho\sigma} \cdot \rho_j \sigma_m + \frac{1}{2} \sum_{\rho\in\mathcal{R}} (b^{(1,k)})_\rho \cdot \rho_j\rho_m \\[1ex]
    (a^{(3,0,k)})_{\cdot jmn} &= \sum_{\rho,\sigma,\tau \in \mathcal{R}} (b^{(3,k)})_{\rho\sigma\tau} \cdot \rho_j \sigma_m \tau_n + \frac{1}{2} \sum_{\rho,\sigma \in \mathcal{R}} (b^{(2,k)})_{\rho\sigma} \cdot (\rho_j\sigma_m\sigma_n + \rho_j\rho_m\sigma_n) \nonumber \\ 
    &\qquad \qquad \qquad + \frac{1}{6} \sum_{\rho\in \mathcal{R}} (b^{(1,k)})_\rho \cdot \rho_j\rho_m\rho_n. \nonumber
\end{align}
As an immediate consequence of the foregoing developments and in conjunction with~\eqref{eq:bIrelation}, we can establish a direct relationship between the continuous coefficients and moments. To be more precise, for each $i$ and $n$, if we denote $a^{(i,n,\cdot)}$ as a collection of $(a^{(i,n,k)})_{\cdot j}$ for all $j,k$, we can obtain
\begin{align}\label{eq:a-I}
    a^{(1,0,\cdot)} &= -\mathcal{I}_1[\bar{u}], \quad a^{(1,1,\cdot)} = -\mathcal{I}_1[\bar{u}], \nonumber \\[1ex]
    a^{(2,0,\cdot)} &= \frac{1}{2}\mathcal{I}_2[\bar{u}], \quad 
    a^{(3,0,\cdot)} = -\frac{1}{6}\mathcal{I}_3[\bar{u}].
\end{align}
This approach offers a practical method for computing the continuous coefficients $a^{(i,n,\cdot)}$ by means of force moments. The detailed derivations for this computational process are available in Section~\ref{sec:sub:abIrelation}. It is crucial to emphasize that the moment iteration introduced in the preceding section remains applicable to the continuous coefficients, $a^{(i,n,\cdot)}$, owing to their linear relationship~\eqref{eq:a-b}.
Consequently, the computation of the continuous coefficients $a^{(i,n,\cdot)}$ can be readily achieved using~\eqref{eq:a-I}. As a result, we propose a continuous version of Algorithm~\ref{alg:moment_iter} in the following. This adapted algorithm will serve as a fundamental component in our forthcoming numerical experiments (cf.~Section~\ref{sec:numexp}).

\begin{algorithm}[H]
\caption{Computation of {\it correctors} with higher-order boundary conditions}
\label{alg:moment_iter_a}
\begin{algorithmic}[1]
\STATE Compute the zeroth-order {\it corrector} : $\bar{u}_{0, R}=\bar{u}_R$ such that \eqref{eq:cellp:galerkin} holds. 
The convergence $\lVert D\bar{u} - D\bar{u}_{0, R} \rVert_{\ell^2} \lesssim R^{-3/2}$ is then obtained.

\vskip0.2cm
\STATE Evaluate $a^{(1,0)}_1, a^{(1,1)}_1, a^{(2,0)}_1, a^{(3,0)}_1$ by applying \eqref{eq:a-I} with $\mathcal{I}_{j, R}[\bar{u}_{0,R}]$ (cf.~\eqref{eq:a-I-R}). Compute the first-order far-field {\it predictor} (boundary condition) by Lemma~\ref{thm:structurewithmomentscont}
\begin{eqnarray}\label{eq:u1ff}
\hat{g}_1 := a^{(1,0)}_1:\nabla G_0 +a^{(1,1)}_1 : \nabla G_1 + a^{(2,0)}_1 :\nabla^2 G_0 + a^{(3,0)}_1:\nabla^3 G_0.
\end{eqnarray}
\vskip-0.8cm
\STATE Compute the first-order {\it corrector} : $\bar{u}_{1,R} \in \hat{g}_1 + \mathcal{U}_R$ such that 
\[
    \delta\mathcal{E}(\bar{u}_{1,R})[v_R] = 0 \qquad \forall v_R \in \mathcal{U}_R.
\]
The corresponding convergence is $\lVert D\bar{u} - D\bar{u}_{1, R} \rVert_{\ell^2} \lesssim R^{-3}$. 
\vskip0.2cm
\STATE Evaluate $a^{(1,0)}_2, a^{(1,1)}_2, a^{(2,0)}_2, a^{(3,0)}_2$ by applying \eqref{eq:a-I} with $\mathcal{I}_{j, R}[\bar{u}_{1,R}]$ (cf.~\eqref{eq:a-I-R}). Compute the second-order far-field {\it predictor} (boundary condition) by Lemma~\ref{thm:structurewithmomentscont}
\begin{eqnarray}\label{eq:u2ff}
\hat{g}_2 := a^{(1,0)}_2:\nabla G_0 +a^{(1,1)}_2 : \nabla G_1 + a^{(2,0)}_2 :\nabla^2 G_0 + a^{(3,0)}_2:\nabla^3 G_0.
\end{eqnarray}
\vskip-0.8cm
\STATE Compute the second-order {\it corrector} : $\bar{u}_{2,R} \in \hat{g}_2 + \mathcal{U}_R$ such that 
\[
    \delta\mathcal{E}(\bar{u}_{2,R})[v_R] = 0 \qquad \forall v_R \in \mathcal{U}_R.
\]
The desired accuracy $\lVert D\bar{u} - D\bar{u}_{2, R} \rVert_{\ell^2} \lesssim R^{-9/2}\cdot\log^4(R)$ is then achieved.
\end{algorithmic}
\end{algorithm}

To be more clear, the computation of continuous coefficients for $i=0,1$ in Algorithm~\ref{alg:moment_iter_a} follows:
\begin{align}\label{eq:a-I-R}
    a^{(1,0,\cdot)}_{i+1} &= -\mathcal{I}_{1,R}[\bar{u}_{i, R}], \quad a^{(1,1,\cdot)}_{i+1} = -\mathcal{I}_{1, R}[\bar{u}_{i, R}], \nonumber \\[1ex]
    a^{(2,0,\cdot)}_{i+1} &= \frac{1}{2}\mathcal{I}_{2, R}[\bar{u}_{i, R}], \quad 
    a^{(3,0,\cdot)}_{i+1} = -\frac{1}{6}\mathcal{I}_{3, R}[\bar{u}_{i, R}].
\end{align}

While the algorithm above only includes the convergence of geometry error, the energy error can be directly estimated by applying
\begin{eqnarray}\label{eq:GE}
|\mathcal{E}(\bar{u})-\mathcal{E}(\bar{u}_{i, R})\big| \lesssim \lVert D\bar{u} - D\bar{u}_{i, R} \rVert^2_{\ell^2} \qquad \textrm{for}~0\leq i\leq 2. 
\end{eqnarray}
In contrast to Algorithm~\ref{alg:moment_iter}, Algorithm~\ref{alg:moment_iter_a} not only provides a practical method for obtaining the correctors with optimal convergence rates, but it also clarifies the construction of higher-order far-field {\it predictors}, which is also referred as higher-order boundary conditions.

\section{Numerical Experiments}
\label{sec:numexp}

In this section, we apply the main algorithm (cf.~Algorithm~\ref{alg:moment_iter_a}) to a selection of representative numerical examples featuring point defects (in the topological sense). Our objective is to demonstrate improved convergence rates with computational cell size for the derived higher-order boundary conditions. These improved convergence rates can be attained without a substantial rise in computational complexity, thanks to the utilization of continuous Green's functions.

\subsection{Model problems}
\label{sec:sub:numer_model}

In all our numerical experiments, we employ tungsten (W), a material characterized by a body-centered cubic (BCC) crystal structure in its solid state. The interatomic interaction is modelled via an embedded atom model (EAM) potential~\cite{Daw1984a}. For these simulations, the cut-off radius is set at $r_{\rm cut} = 5.5$\AA, encompassing interactions up to the third neighbor in the crystal lattice.

We consider five characteristic instances of localized point defects, each with their core geometries depicted on the (001) plane, as illustrated in Figure~\ref{figs:geom}:

\begin{itemize}
	\item {\it Single vacancy (Figure~\ref{fig:sub:geom_singvac}):} a single vacancy located at the origin, defined by $\Lambda^{\rm def}:=\Lambda\setminus\{{\bm 0}\}$; 
	\item {\it Divacancy (Figure~\ref{fig:sub:geom_divac}):} two adjacent vacancies;
	\item {\it Interstitial (Figure~\ref{fig:sub:geom_int}):} an additional W atom located at the centre of a bond between two nearest neighbors, defined by $\Lambda^{\rm def}:=\Lambda \cup \{(r_0/4, r_0/4, r_0/4)\}$, where $r_0$ is the lattice constant of W;
	\item {\it Microcrack2 (Figure~\ref{fig:sub:geom_micro2}):} a row of five adjacent vacancies; 
	\item {\it Microcrack3 (Figure~\ref{fig:sub:geom_micro3}):} a row of seven adjacent vacancies. 
\end{itemize}

The two examples labelled ``micro-cracks'' are not cracks in the conventional sense. They serve as examples of localized defects with an anisotropic shape. We will see how this leads to a significant pre-asymptotic regime.

\begin{figure}[!htb]
    \centering
    \subfloat[Vacancy \label{fig:sub:geom_singvac}]{
    \includegraphics[height=4.91cm]{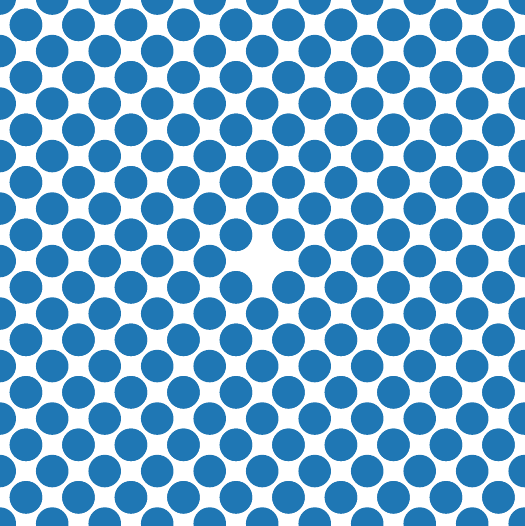}} \quad
    \subfloat[Divacancy \label{fig:sub:geom_divac}]{
    \includegraphics[height=4.91cm]{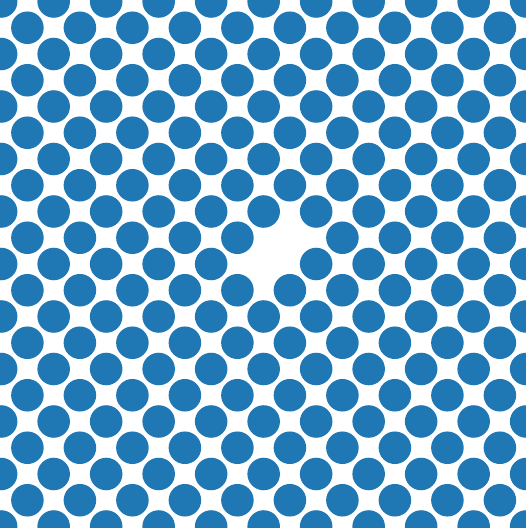}} \quad
    \subfloat[Interstitial \label{fig:sub:geom_int}]{
    \includegraphics[height=4.91cm]{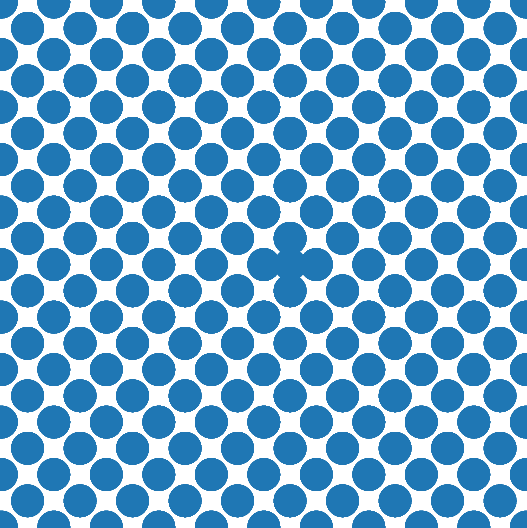}} \\
    \subfloat[Microcrack2 \label{fig:sub:geom_micro2}]{
    \includegraphics[height=4.91cm]{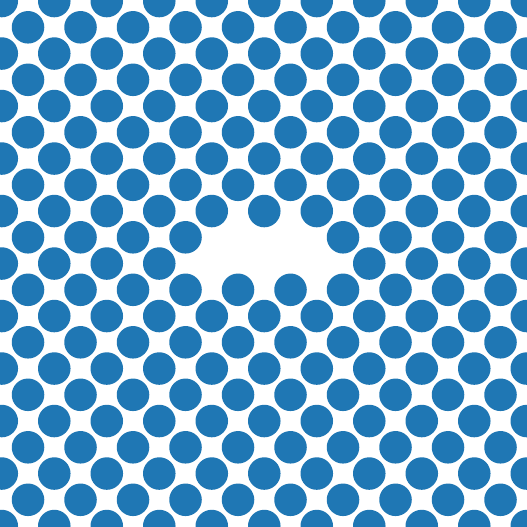}} \quad
    \subfloat[Microcrack3 \label{fig:sub:geom_micro3}]{
    \includegraphics[height=4.91cm]{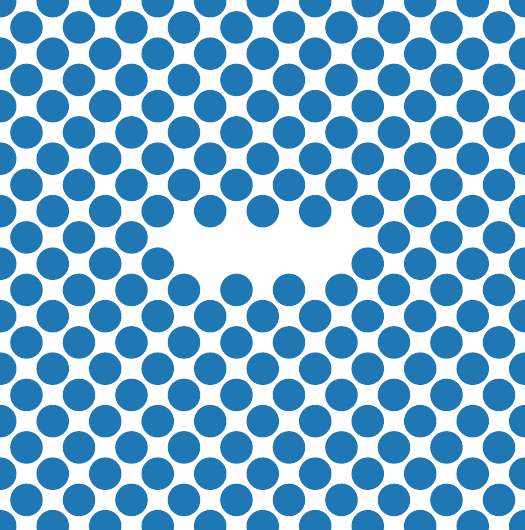}}
    \caption{Defect cores for the five cases considered in this work, illustrated on the (001) plane, serving as benchmark problems for the numerical tests.} 
    \label{figs:geom}
\end{figure}

For the computation of the equilibrium displacement in \eqref{eq:equil_point} with a high degree of accuracy, we utilize a preconditioned Limited-memory Broyden-Fletcher-Goldfarb-Shanno (LBFGS) algorithm, as outlined in \cite{liu1989limited}. Subsequently, we employ post-processing with a standard Newton scheme. The minimization process is halted when the force residual, measured in the $\ell^{\infty}$-norm, reaches a value of $10^{-8}$, i.e., $\|\nabla \mathcal{E}(u)\|_{\infty} < {\rm tol} = 10^{-8}$.

In practical implementation, the computation of the continuous Green's function $G_0$ and its first-order correction $G_1$ necessitates the application of Barnett's formula~\cite{barnett1972precise}. A comprehensive derivation can be found in Section~\ref{sec:sub:gf}. To obtain the higher-order derivatives of $G_0$ up to the third order, denoted as $\nabla^j G_0$ with $j=1,2,3$, we employ forward-mode automatic differentiation based on dual numbers~\cite{revels2016forward}.

\subsection{Convergence of cell problems}
\label{sec:sub:conv}

We perform the convergence studies for both geometry error $\|D\bar{u}-D\bar{u}_{i, R}\|_{\ell^2}$ and energy error $\big|\mathcal{E}(\bar{u})-\mathcal{E}(\bar{u}_{i, R})\big|$ by increasing the radius of computational domain $R$. The approximate equilibrium $\bar{u}_{i, R}$ are obtained iteratively using the moment iteration approach (cf.~Algorithm~\ref{alg:moment_iter_a}) with the zeroth-order ($i=0$), the first-order ($i=2$) and the second-order ($i=1$) boundary conditions. The reference solution $\bar{u}$ is computed by solving the numerical problem on a significantly larger computational domain with a radius of $R_{\rm dom}=100a_0$, where $a_0$ represents the lattice constant of material W. 

\subsubsection*{Decay of strains}

In this section, we verify the decay of the {\it correctors} in strains, which is a direct application of Theorem~\ref{thm:pointdef}. Figure~\ref{figs:decay} illustrates the decay of the strains $|D\bar{u}_{i, R_{\rm dom}}(\ell)|$ for different orders of {\it predictors} obtained using Algorithm~\ref{alg:moment_iter_a} for all the crystalline defects investigated in this study. The transparent dots represent the corresponding data points ($|\ell|, |D\bar{u}_{i, R_{\rm dom}}(\ell)|$) for $i=0,1,2$, while the solid curves depict their envelopes. The observed numerical improvement in decay for higher-order {\it predictors} aligns with our theoretical findings.

\begin{figure}[!htb]
    \centering
    \subfloat[Vacancy \label{fig:sub:decay_singvac}]{
    \includegraphics[height=4.5cm]{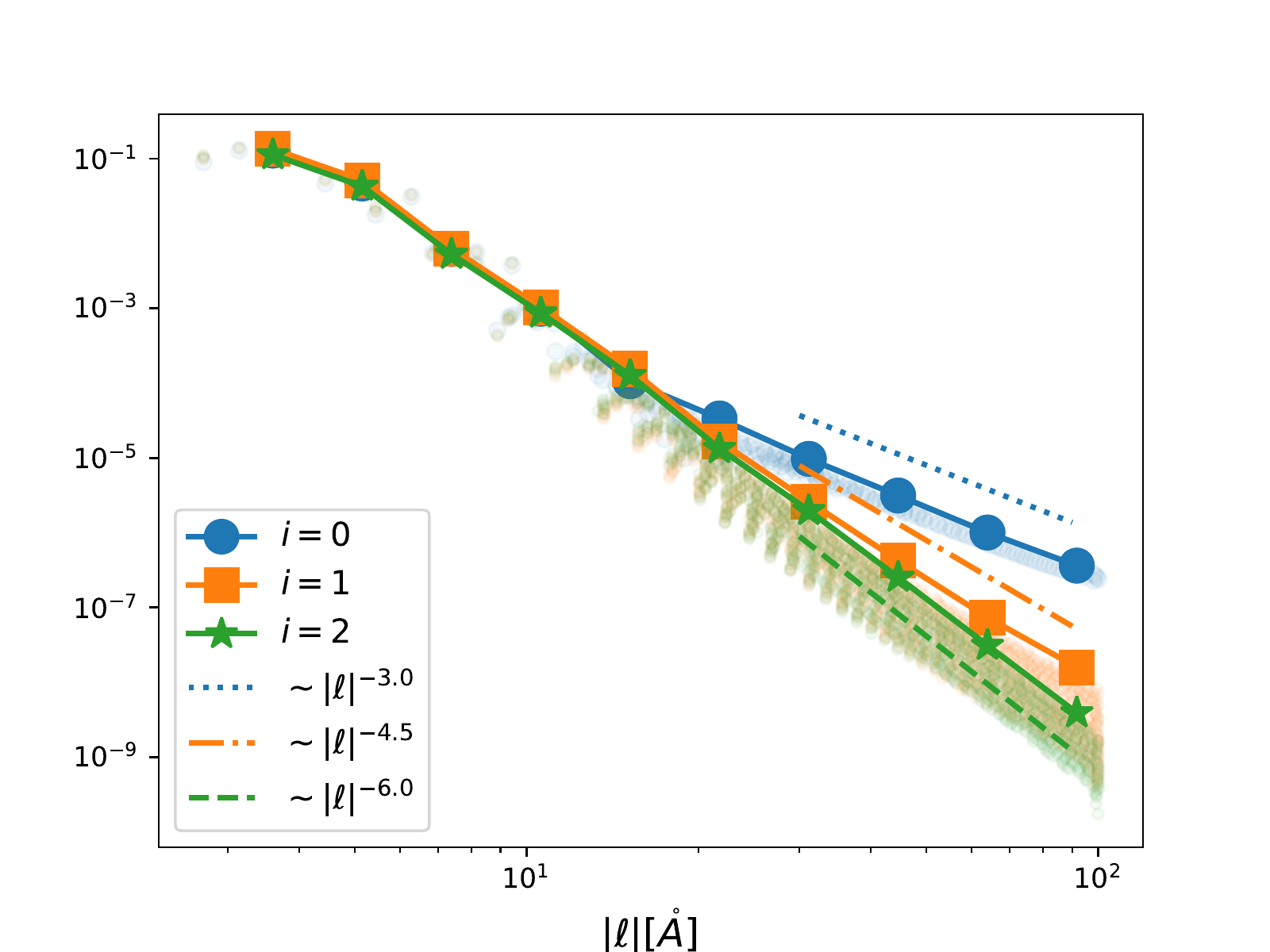}}~ 
    \subfloat[Divacancy \label{fig:sub:decay_divac}]{
    \includegraphics[height=4.5cm]{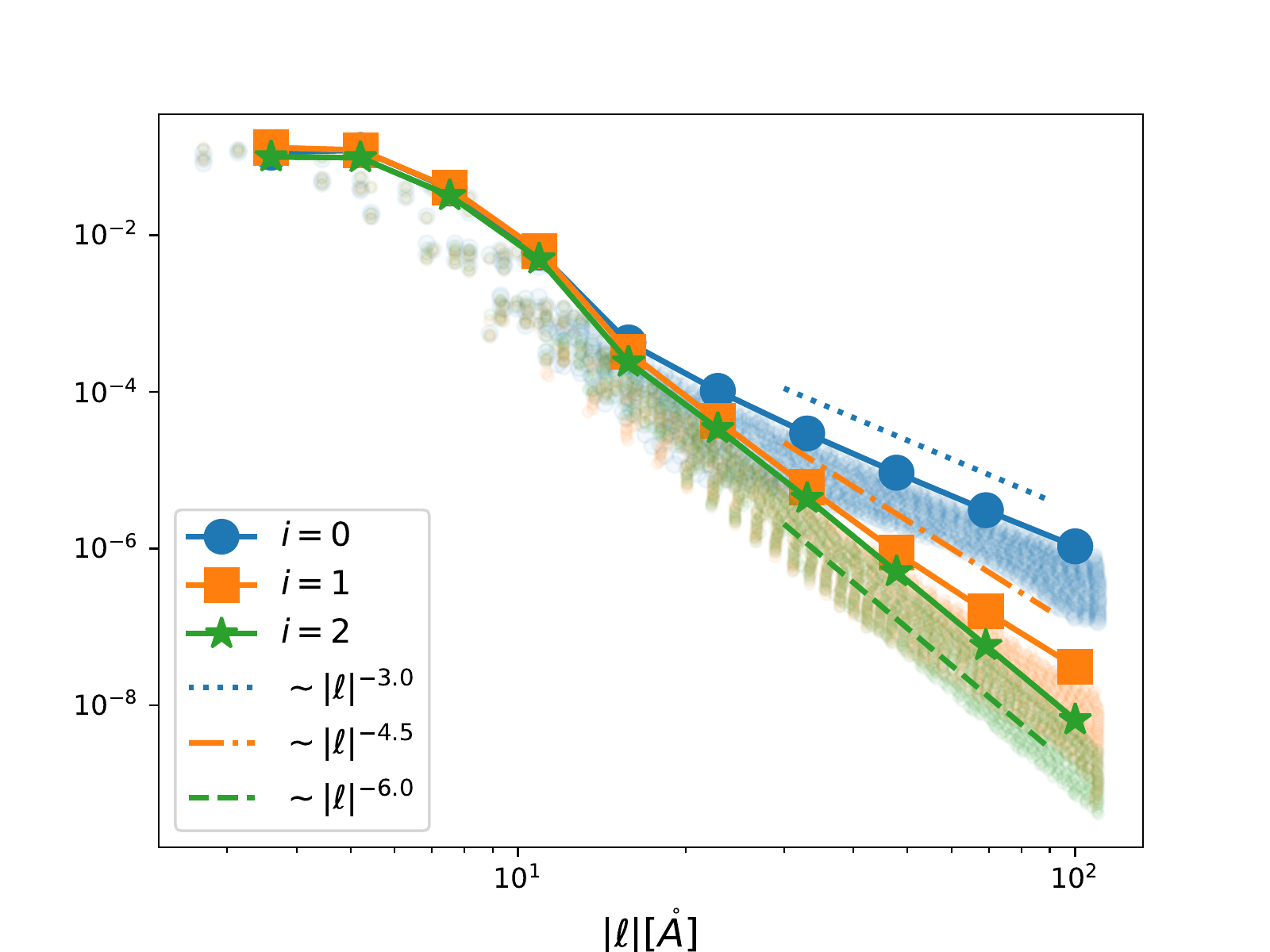}}~
    \subfloat[Interstitial \label{fig:sub:decay_int}]{
    \includegraphics[height=4.5cm]{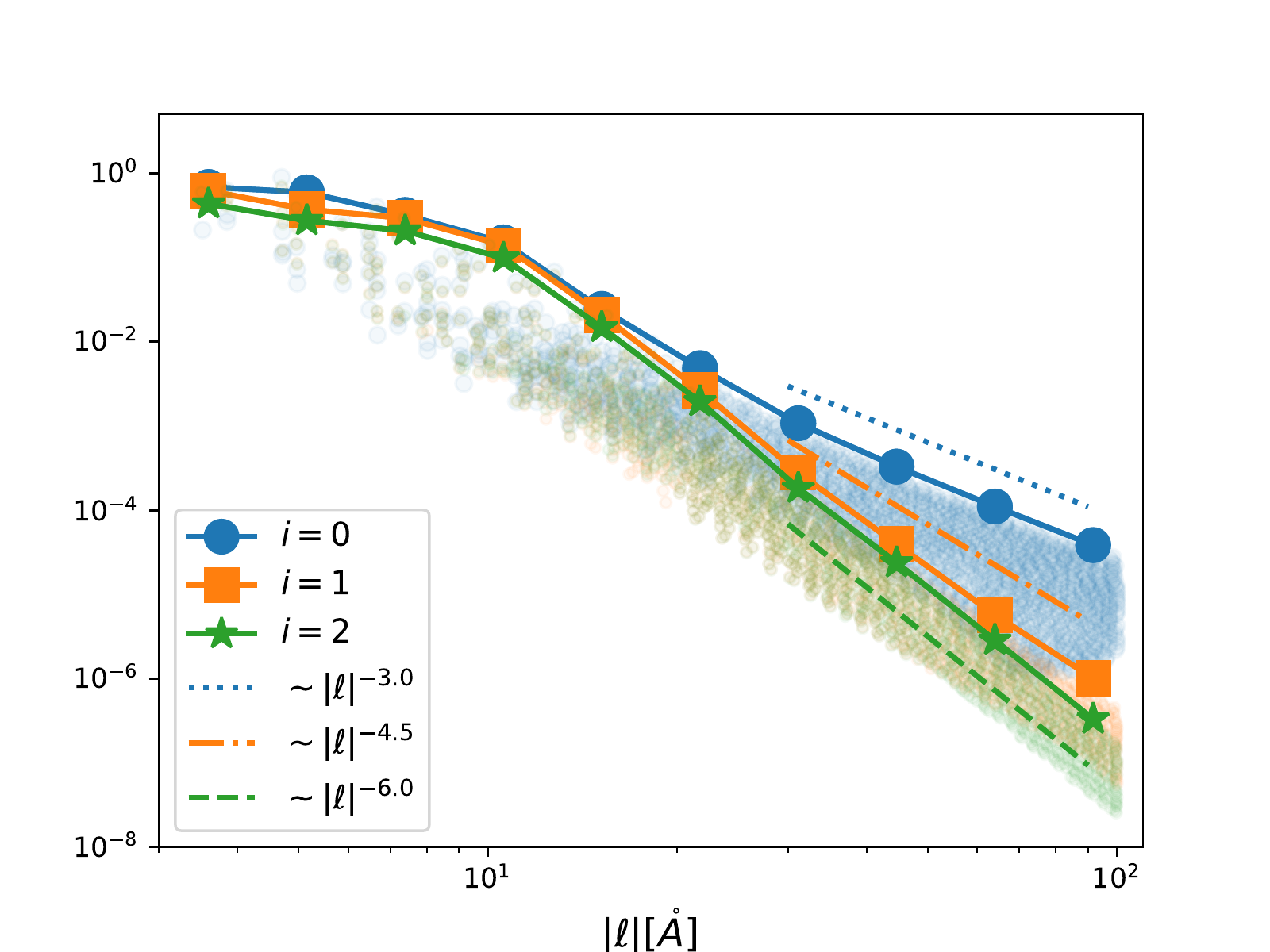}}\\
    \subfloat[Microcrack2 \label{fig:sub:decay_micro2}]{
    \includegraphics[height=4.5cm]{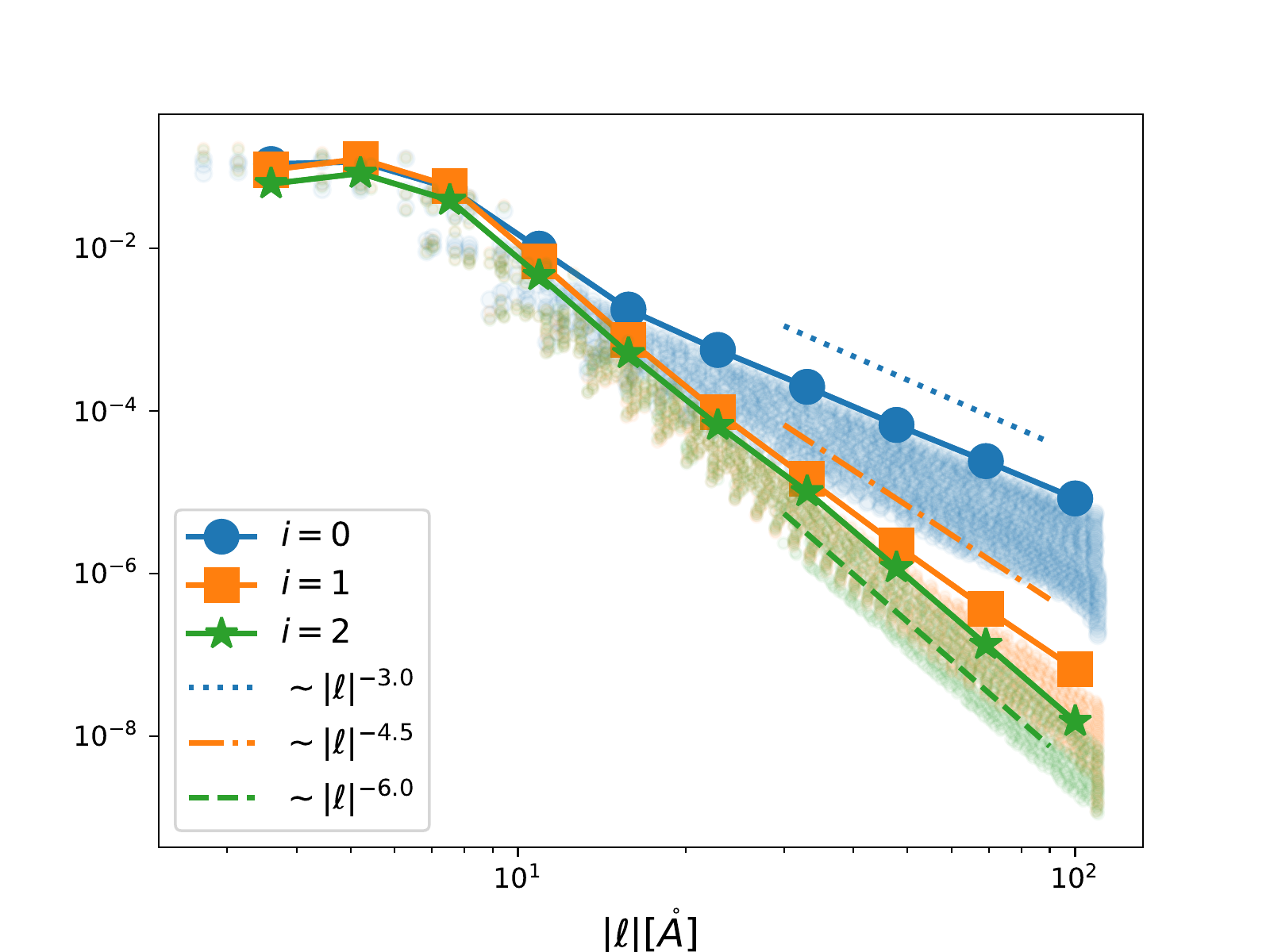}}~
    \subfloat[Microcrack3 \label{fig:sub:decay_micro3}]{
    \includegraphics[height=4.5cm]{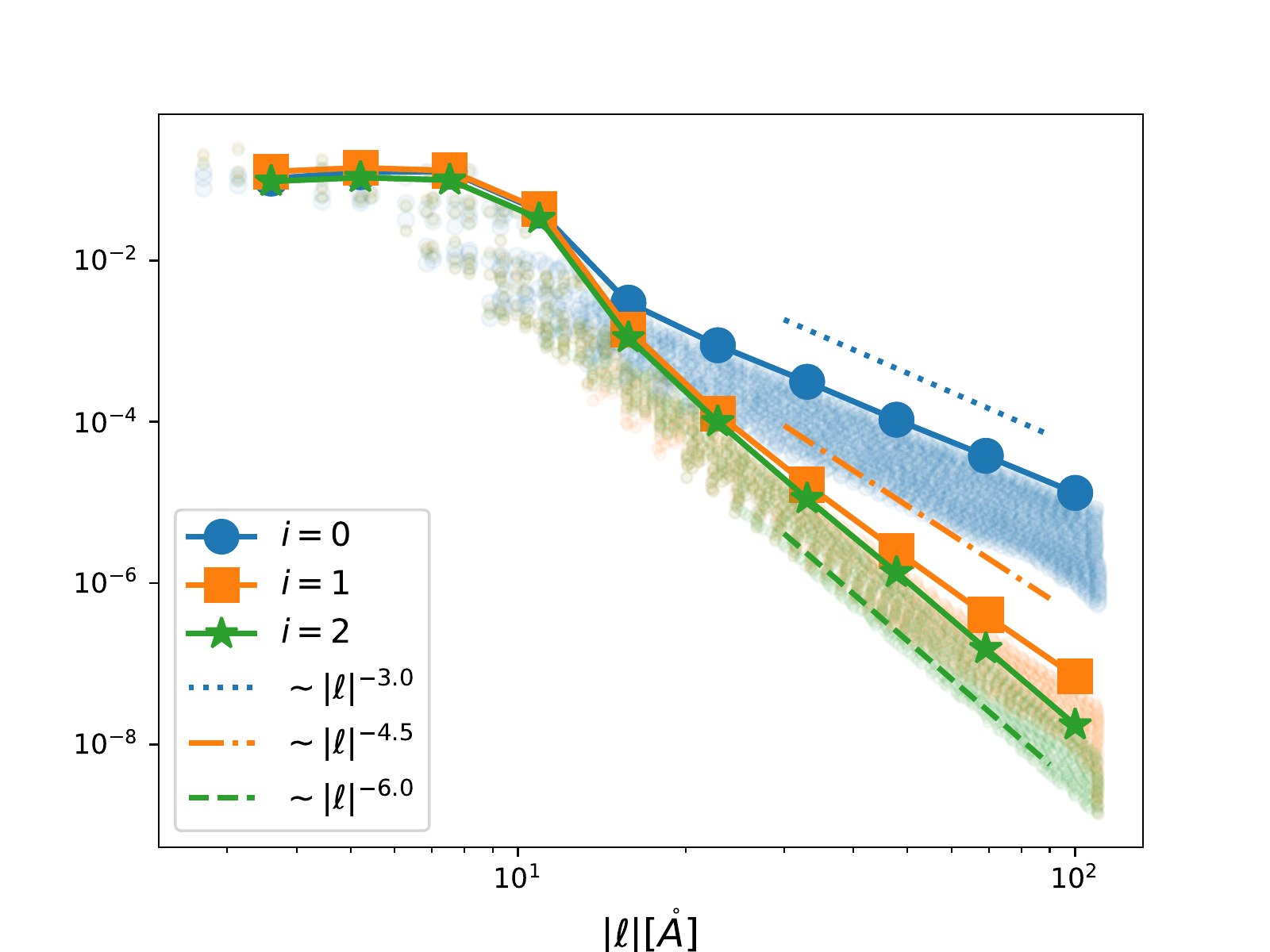}}~
    \caption{Decay of strains $\big|D\bar{u}_{i, R_{\rm dom}}(\ell)\big|$ for $i=0,1,2$ for all types of crystalline defect considered in this work.}
    \label{figs:decay}
\end{figure}

\subsubsection*{Moments convergence}

To begin, we investigate the convergence of force moments. The relative error of the $k$-th moment evaluated at $\bar{u}_{i, R}$ is defined by
\begin{eqnarray}\label{eq:num:moments_err}
{\rm ME}_{ki} := \frac{\big|\mathcal{I}_{k}[\bar{u}_{i, R}] - \mathcal{I}_k[\bar{u}]\big|}{\big|\mathcal{I}_k[\bar{u}]\big|},
\end{eqnarray}
where $\mathcal{I}_k$ is the $k$-th moment given by \eqref{eq:results:defn_Ij}. 

Figure~\ref{figs:conv_M} illustrates the convergence of the moments error \eqref{eq:num:moments_err} for $i=0,1$ and $k=1,2,3$, with respect to the domain size $R$, considering the vacancy, divacancy, and interstitial defects. The observed convergence behavior aligns with the theoretical predictions outlined in Theorem \ref{thm:moments}, demonstrating the potential for achieving accelerated convergence through the moment iteration process.
The numerical evidence not only validates the theoretical framework but also underscores the relevance of our results in defect physics~\cite{dudarev2018elastic, nazarov2016first}. Our approach employs an iterative method aimed at systematically enhancing the accuracy of approximate multipole tensor evaluations. The robust numerical results presented here for the defect dipole tensor remain of ongoing interest in point defect simulations. Importantly, our approach inherently accommodates the anisotropic case and facilitates extensions to higher-order multipole tensors.

\begin{figure}[!htb]
    \centering
    \subfloat[Vacancy \label{fig:sub:conv_M_singvac}]{
    \includegraphics[height=4.5cm]{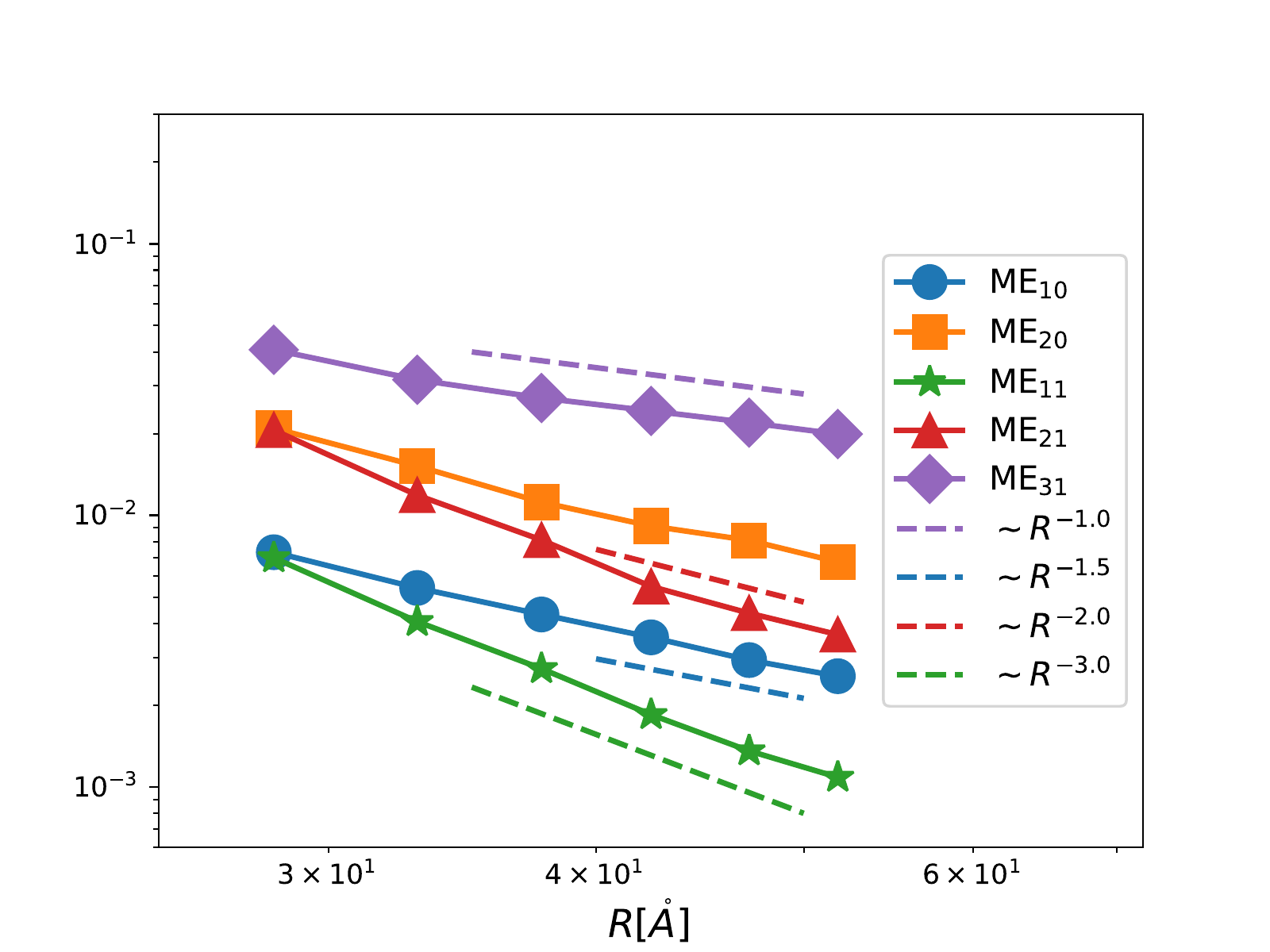}}
    \subfloat[Divacancy \label{fig:sub:conv_M_divac}]{
    \includegraphics[height=4.5cm]{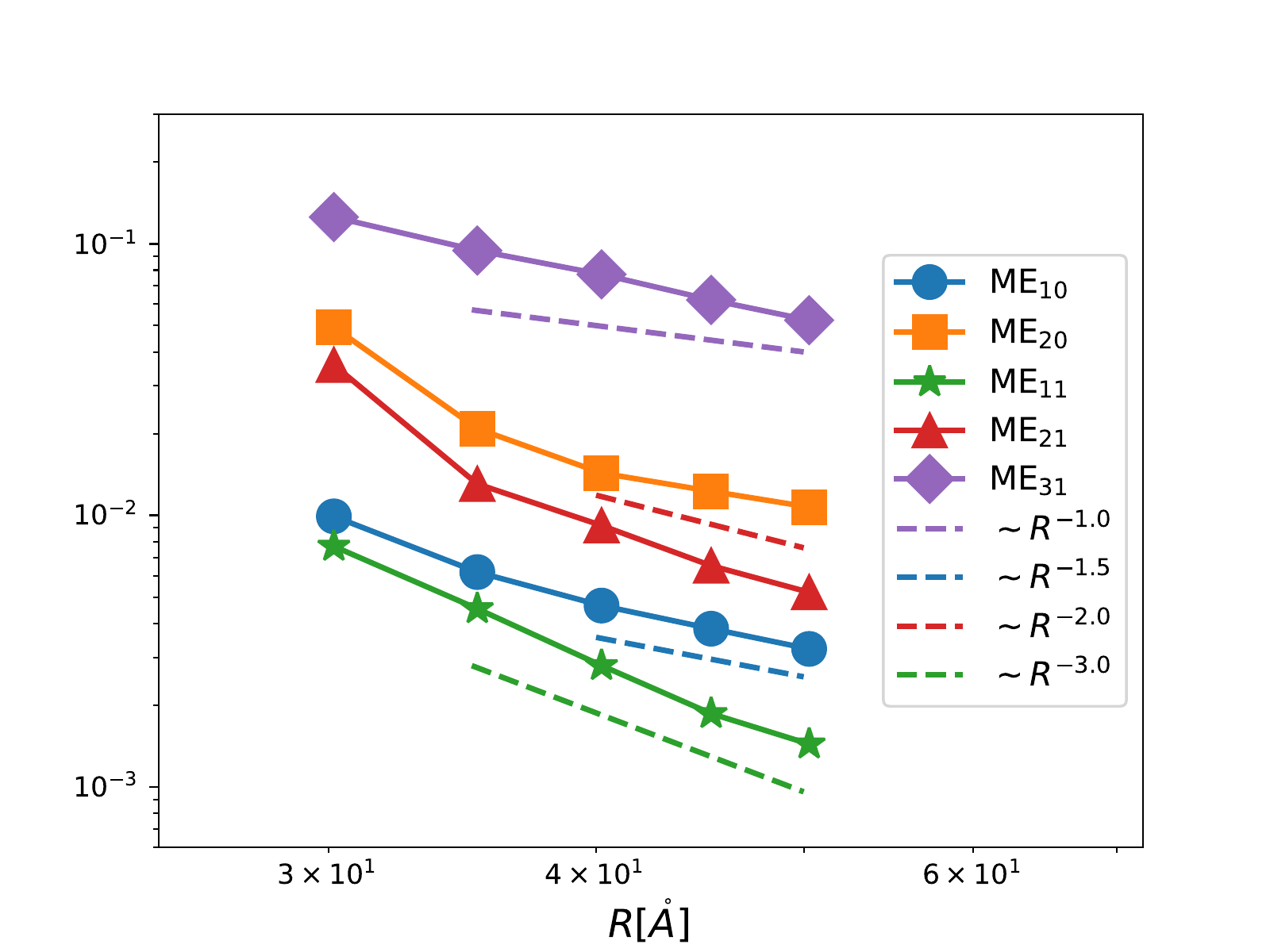}}
    \subfloat[Interstitial \label{fig:sub:conv_M_int}]{
    \includegraphics[height=4.5cm]{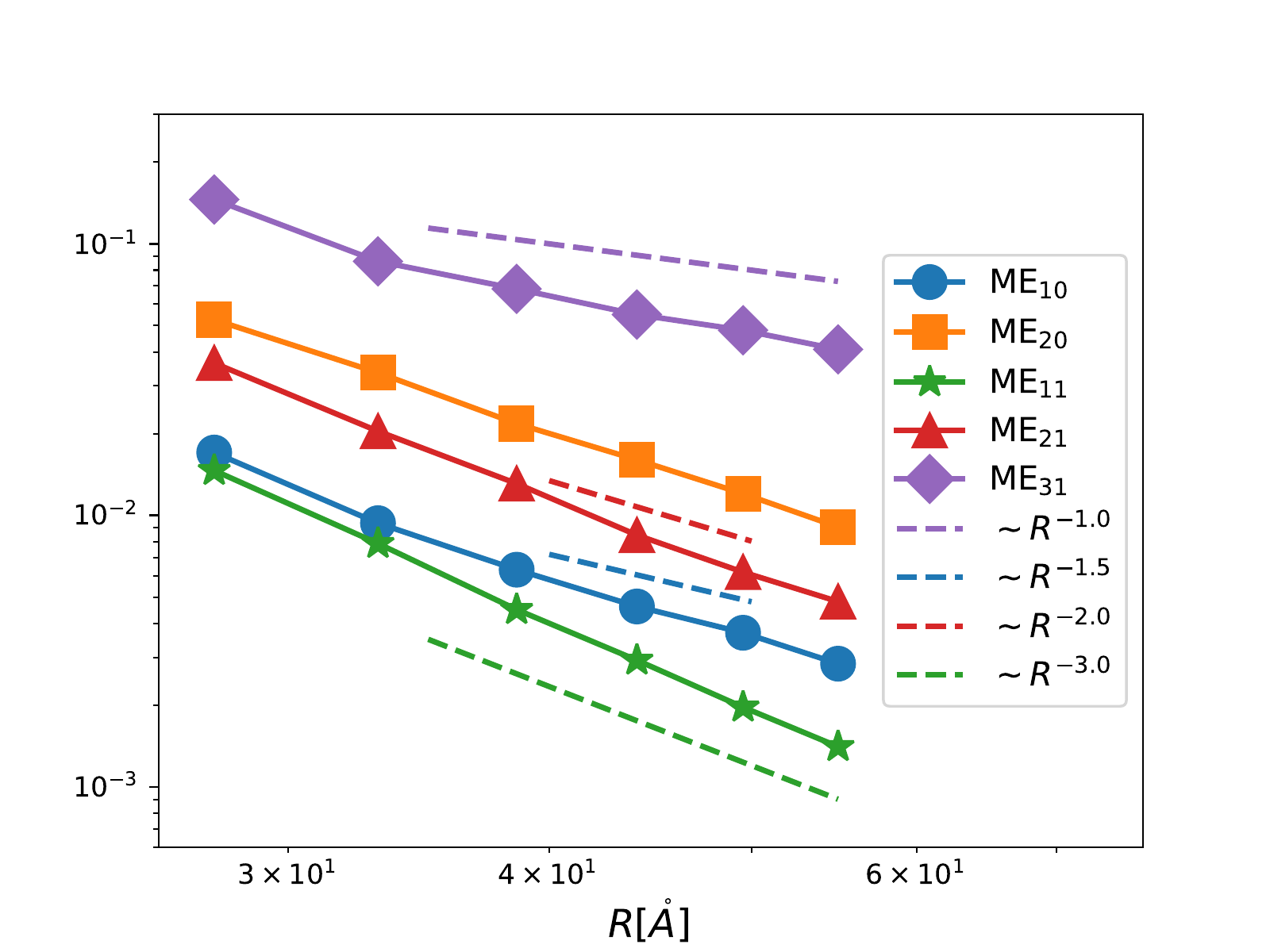}} \\
    \subfloat[Microcrack2 \label{fig:sub:conv_M_micro2}]{
    \includegraphics[height=4.5cm]{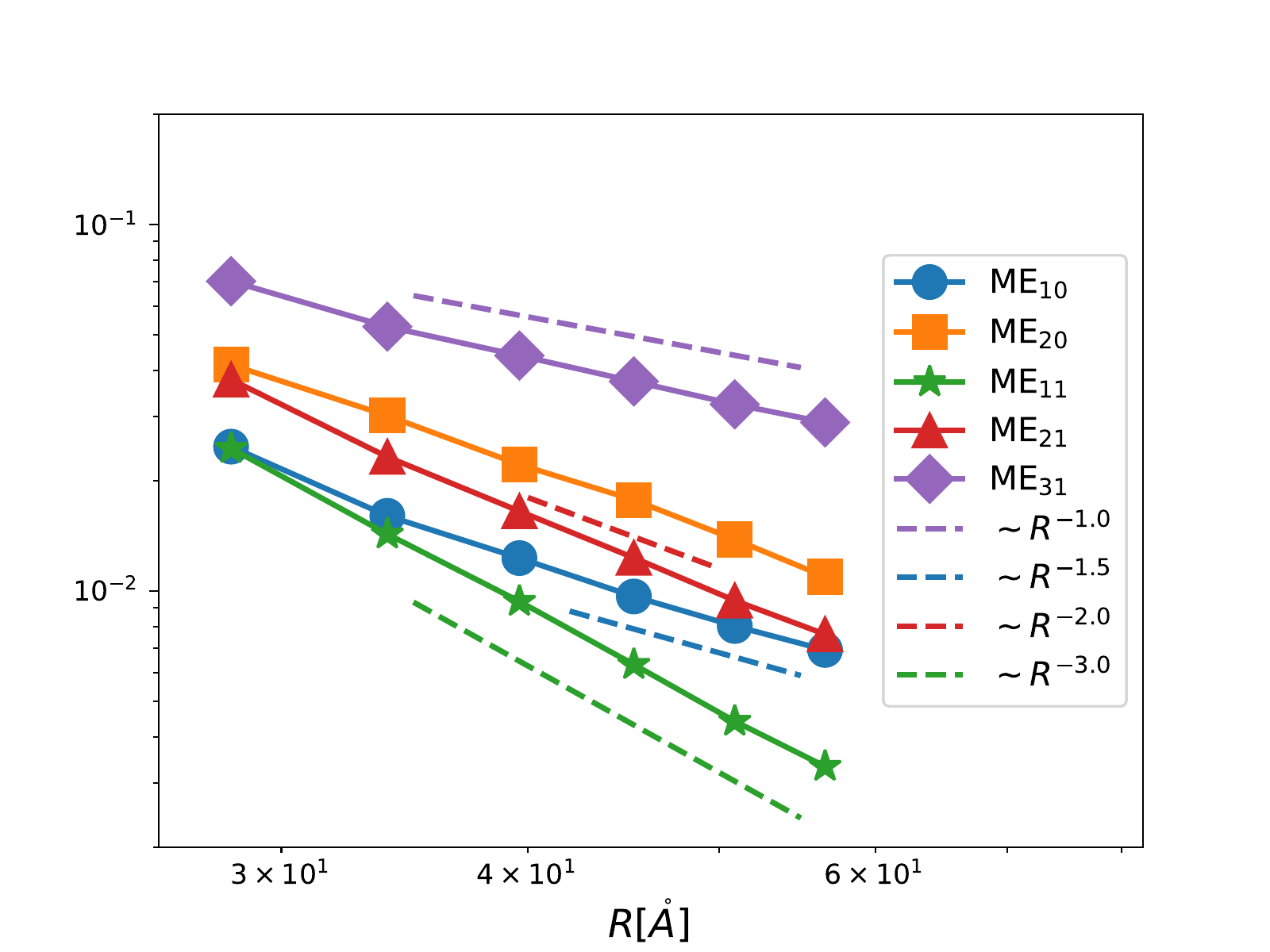}}
    \subfloat[Microcrack3 \label{fig:sub:conv_M_micro3}]{
    \includegraphics[height=4.5cm]{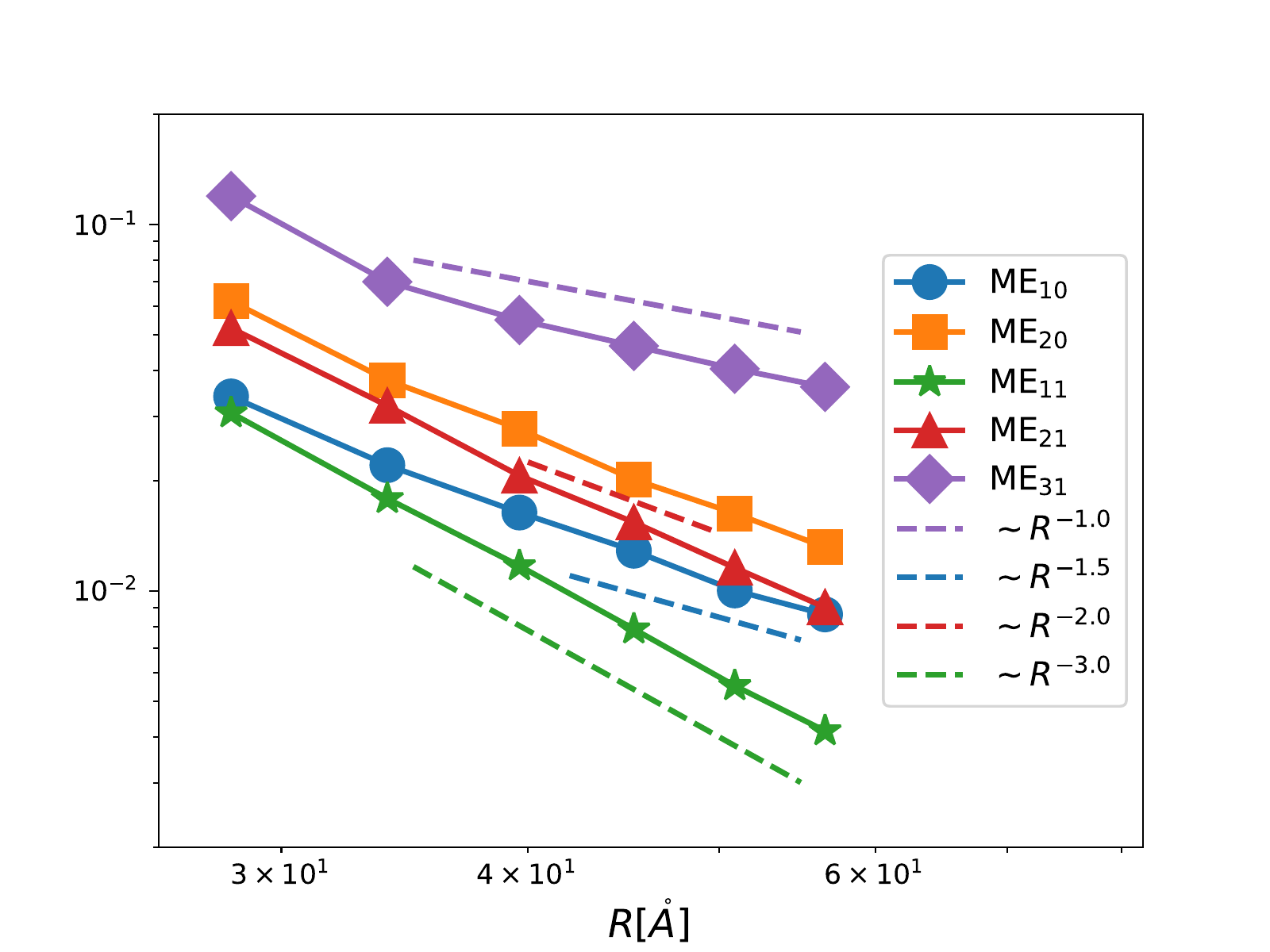}}
    \caption{Convergence of the relative moments error ${\rm ME}_{ki}$ defined by \eqref{eq:num:moments_err} against domain size $R$. The blue and green lines illustrate the accelerate convergence of the dipole moment tensor ($k=1$) when improved boundary conditions are considered.} 
    \label{figs:conv_M}
    \captionsetup{labelformat=empty}
\end{figure}

\subsubsection*{Geometry error}

The main observation that highlights the significance of this work is summarized in Figure~\ref{figs:conv_Du}, which showcases the convergence of the geometry error $\|D\bar{u} - D\bar{u}_{i, R}\|_{\ell^2}$ with respect to the domain size $R$. 

Figure~\ref{figs:conv_Du} provides a clear depiction of the improved convergence rates achieved through the implementation of higher-order boundary conditions, employing the moment iteration process. These observed convergence rates closely align with the theoretical predictions stated in Corollary~\ref{th:galerkinbM}. It is noteworthy that the higher-order convergence becomes more apparent as $R$ exceeds $30\mathring{\mathrm{A}}$. However, it is important to acknowledge that this poses increased computational challenges, especially in the context of electronic structure calculations. While addressing the associated computational costs falls outside the scope of this study, future research endeavors may explore potential solutions by combining the current scheme with the flexible boundary conditions presented in \cite{buze2021numerical}, which appear to reduce the range of the pre-asymptotic regime.

\begin{figure}[!htb]
    \centering
    \subfloat[Vacancy \label{fig:sub:conv_Du_singvac}]{
    \includegraphics[height=4.5cm]{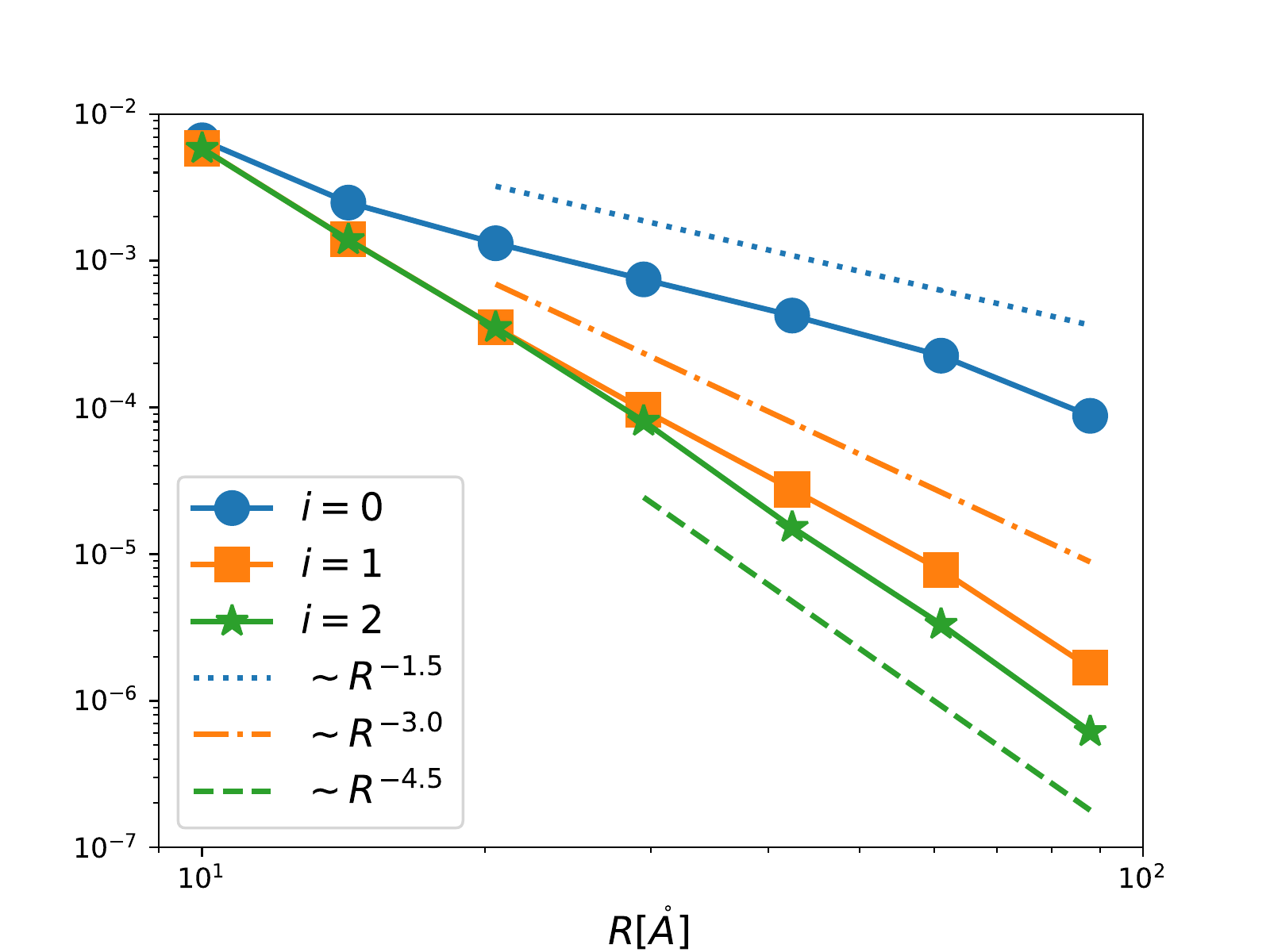}} 
    \subfloat[Divacancy \label{fig:sub:conv_Du_divac}]{
    \includegraphics[height=4.5cm]{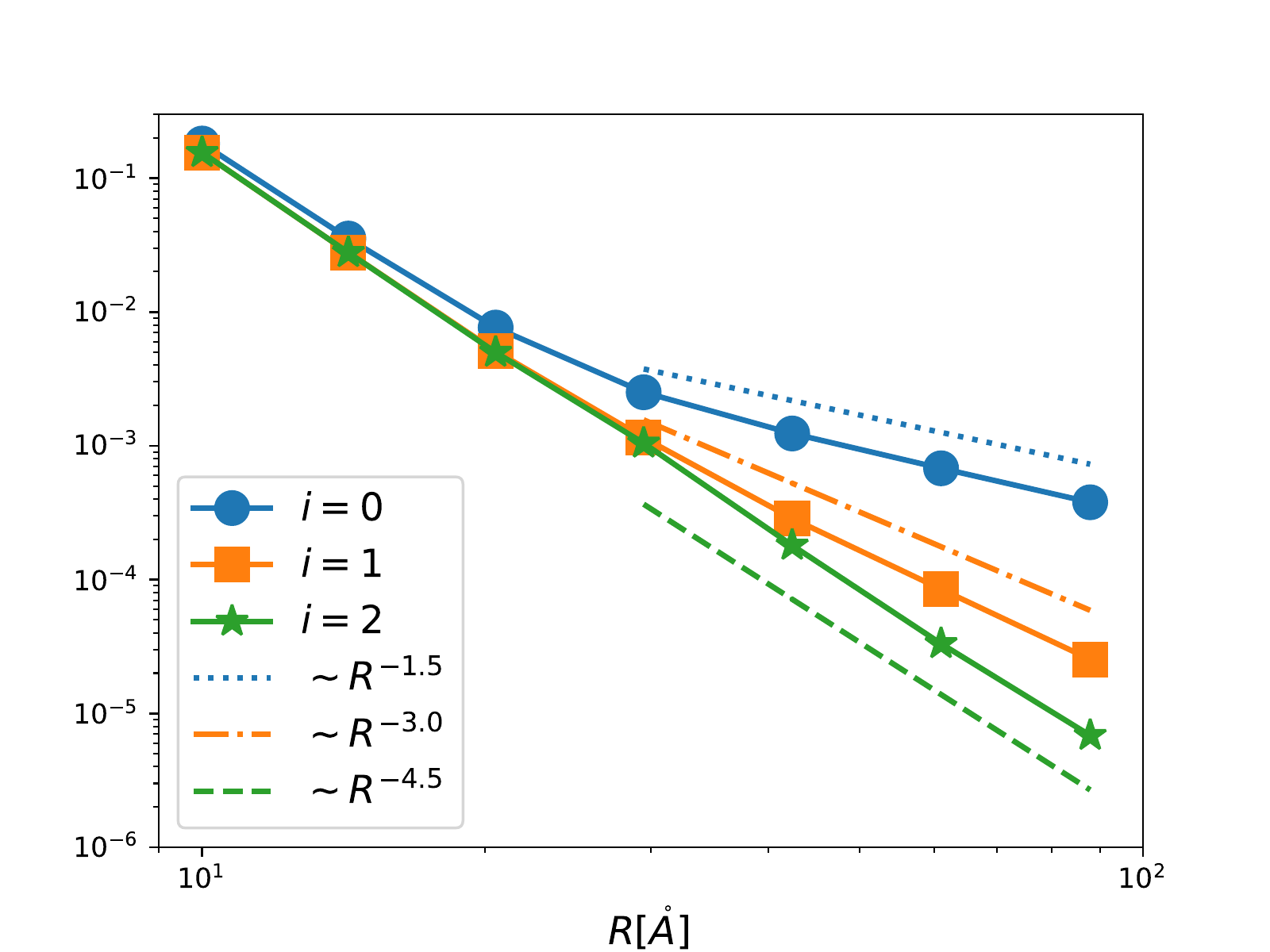}}
    \subfloat[Interstitial \label{fig:sub:conv_Du_int}]{
    \includegraphics[height=4.5cm]{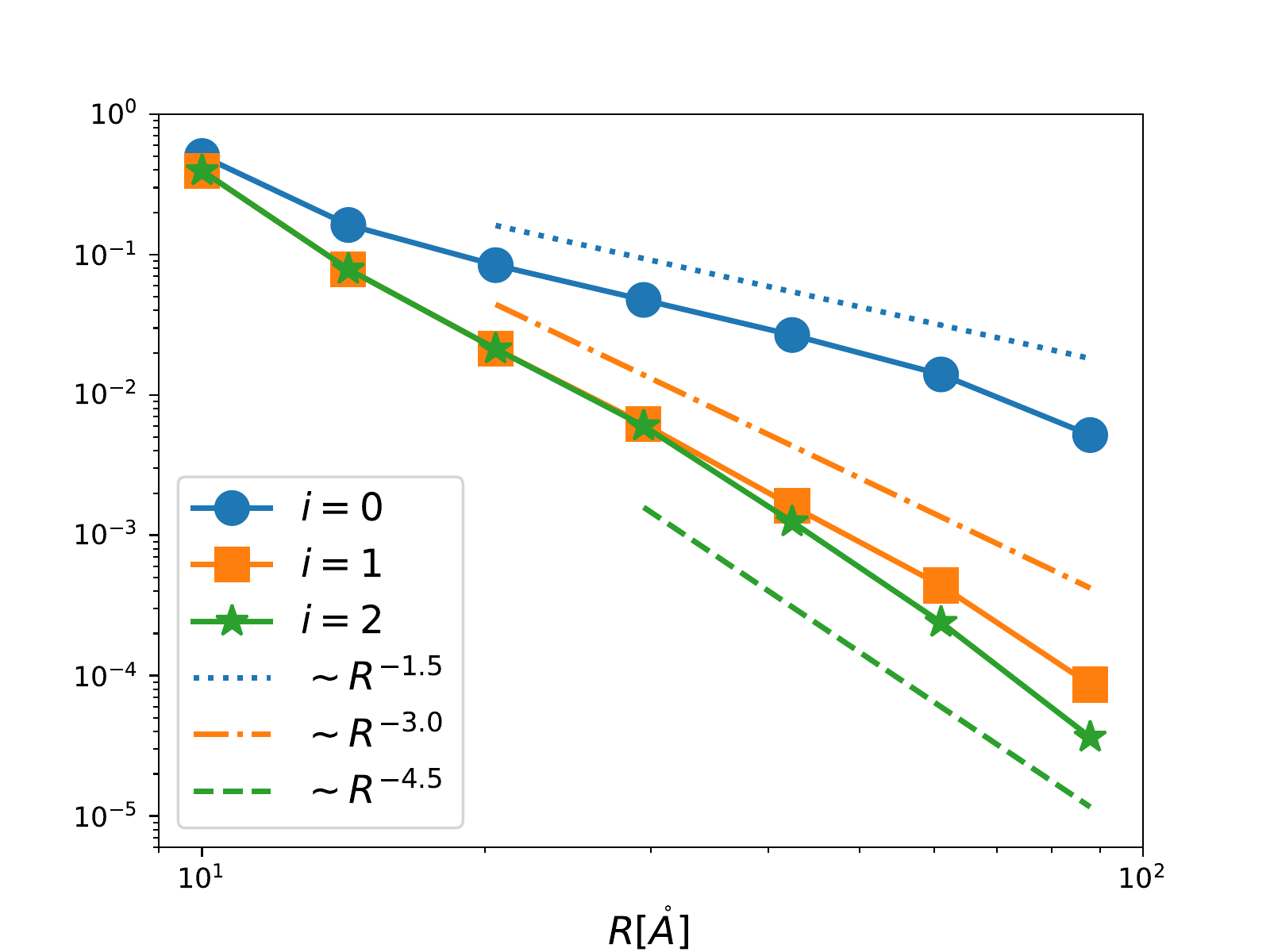}} \\
    \subfloat[Microcrack2 \label{fig:sub:conv_Du_micro2}]{
    \includegraphics[height=4.5cm]{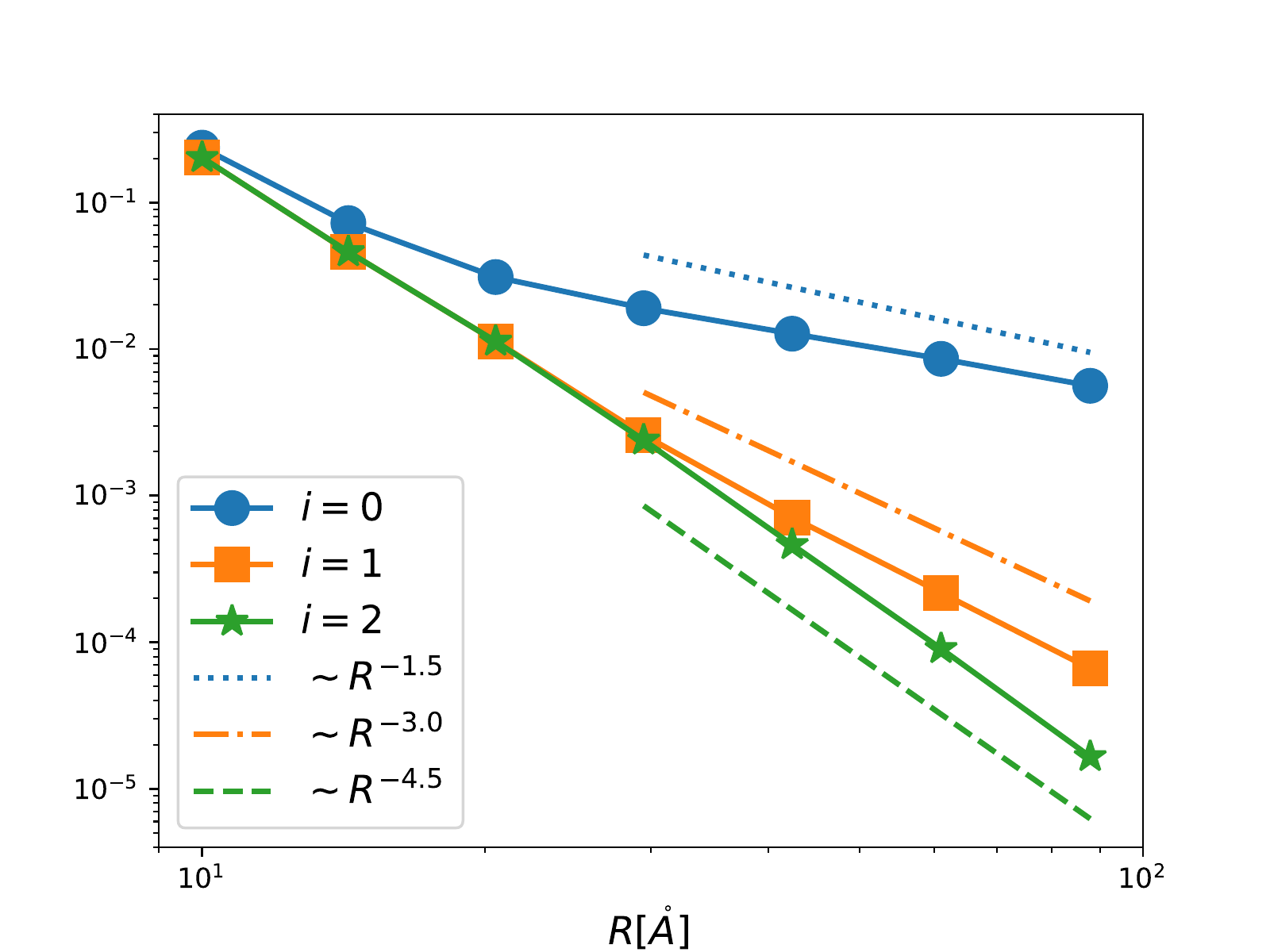}} 
    \subfloat[Microcrack3 \label{fig:sub:conv_Du_micro3}]{
    \includegraphics[height=4.5cm]{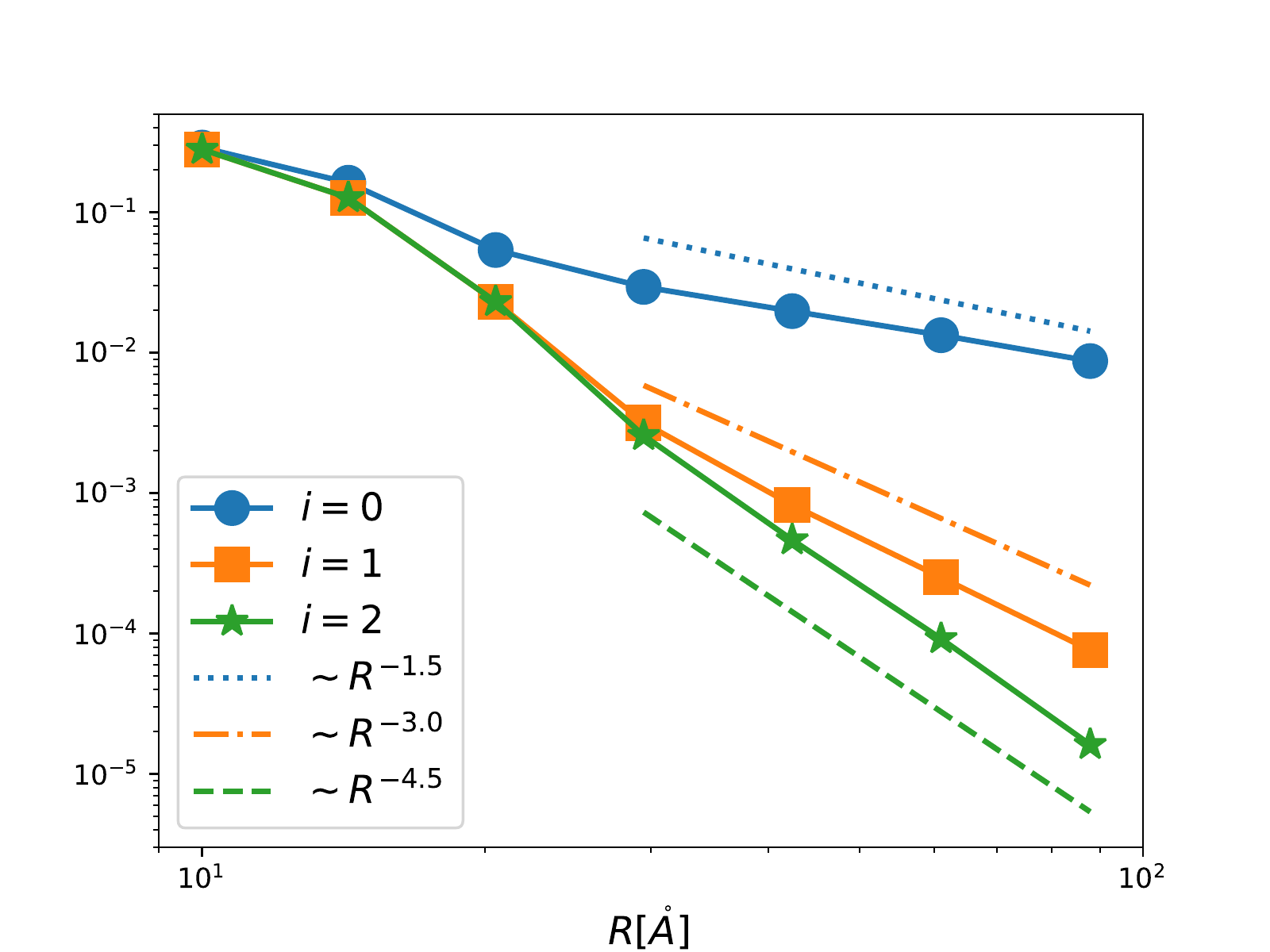}}
    \caption{Convergence of geometry error $\|D\bar{u} - D\bar{u}_{i, R}\|_{\ell^2}$ for $i=0,1,2$ against domain size $R$.}
    \label{figs:conv_Du}
\end{figure}

\subsubsection*{Energy error}

The convergence of the energy error is a natural consequence of the relationship between the geometry error and the energy error, as indicated by the quadratic relationship in \eqref{eq:GE}. Figure~\ref{figs:conv_E} plots the energy error $\big|\mathcal{E}(\bar{u})-\mathcal{E}(\bar{u}_{i, R})\big|$ for $i=0,1,2$ in the cell problems with respect to the domain size $R$. These plots further validate the predictions made in Corollary~\ref{th:galerkinbM} regarding the convergence of the energy error. It is interesting to observe that higher order boundary conditions dramatically improve the energy errors even in the pre-asymptotic regime. Since most defect simulations are often concerned primarily with accuracy energies, this is a promising result. 

\begin{figure}[!htb]
    \centering
    \subfloat[Vacancy \label{fig:sub:conv_E_singvac}]{
    \includegraphics[height=4.5cm]{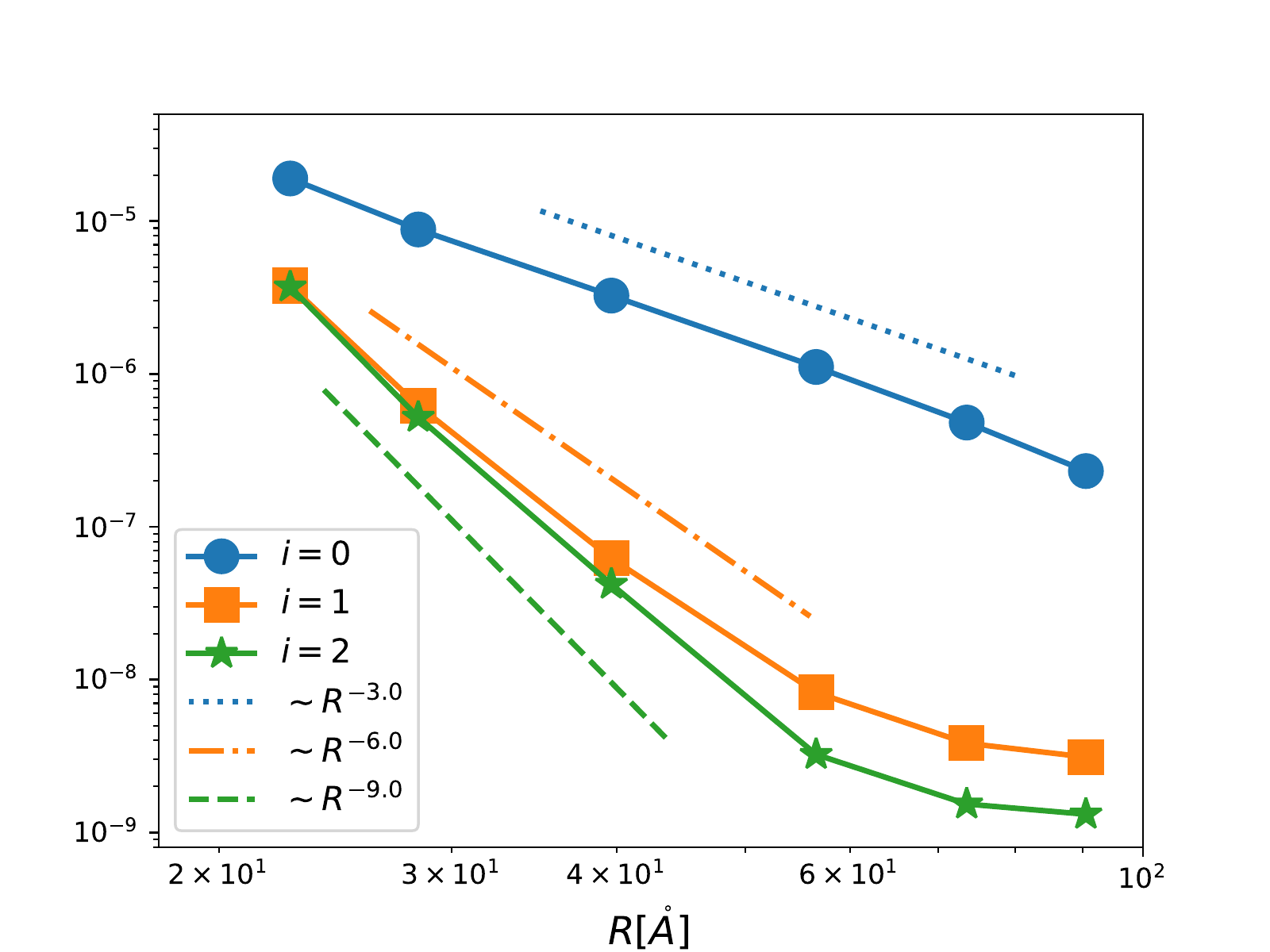}}~
    \subfloat[Divacancy \label{fig:sub:conv_E_divac}]{
    \includegraphics[height=4.5cm]{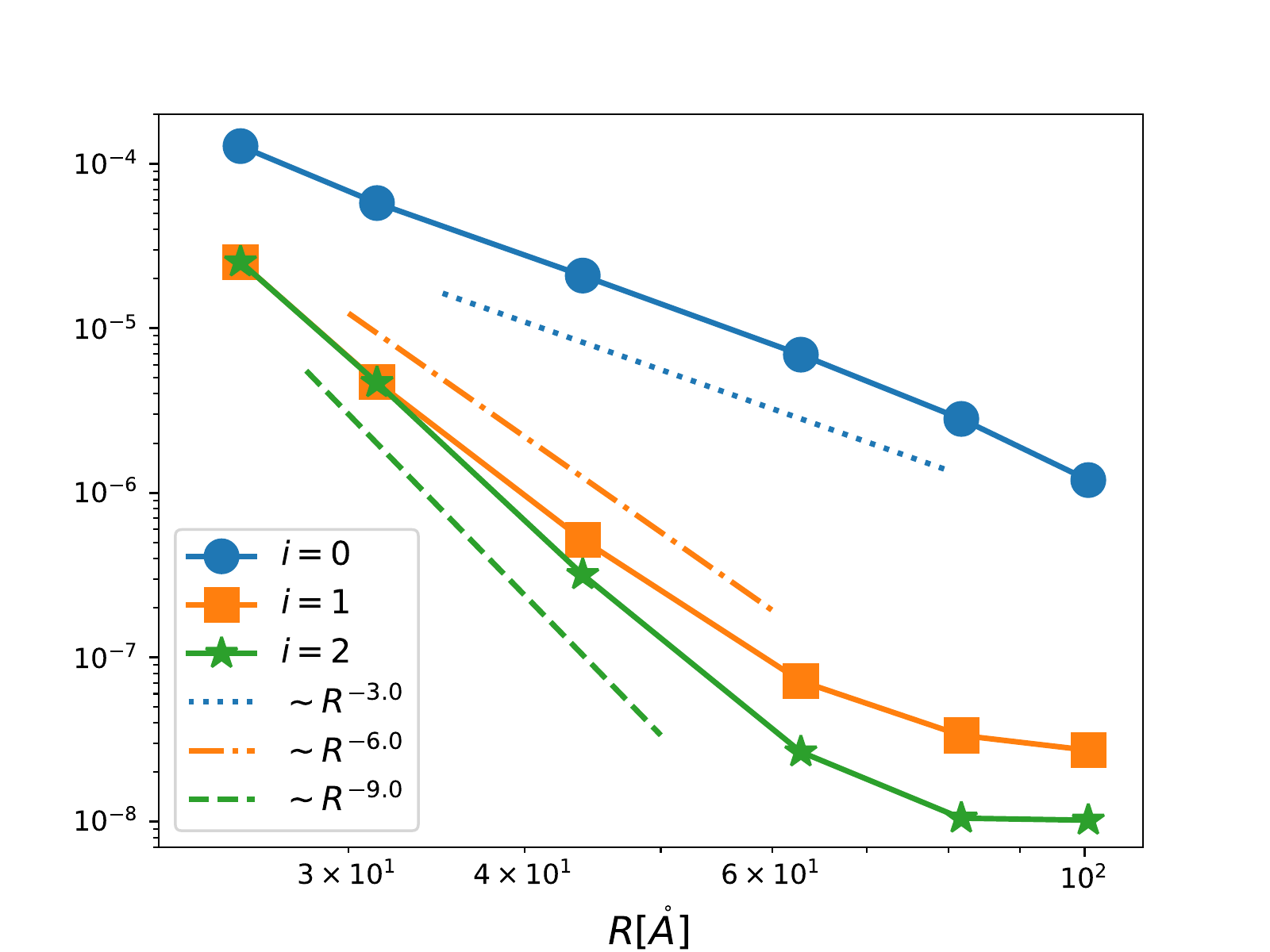}}~
    \subfloat[Interstitial \label{fig:sub:conv_E_int}]{
    \includegraphics[height=4.5cm]{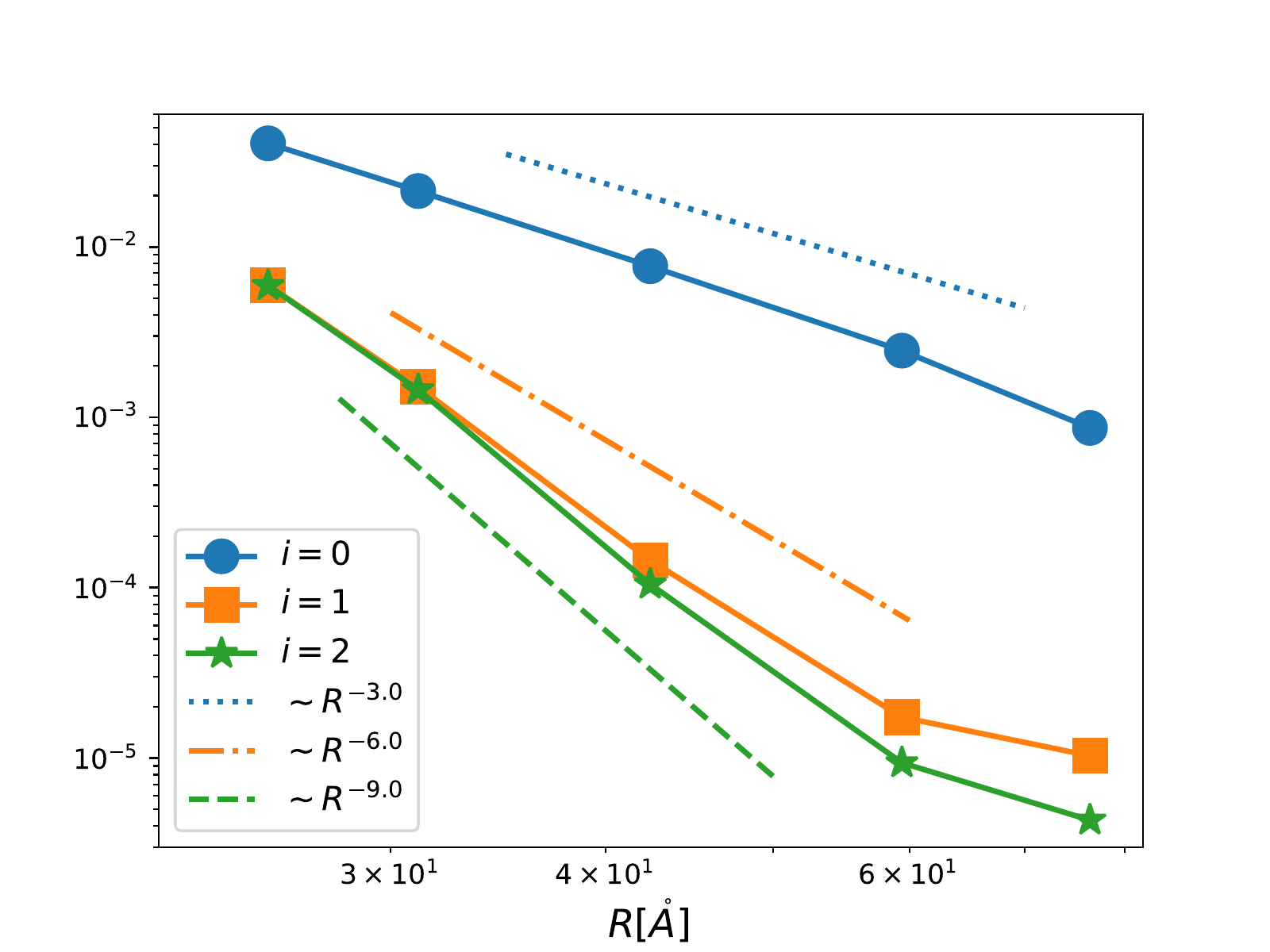}} \\
    \subfloat[Micro-crack-2 \label{fig:sub:conv_E_micro2}]{
    \includegraphics[height=4.5cm]{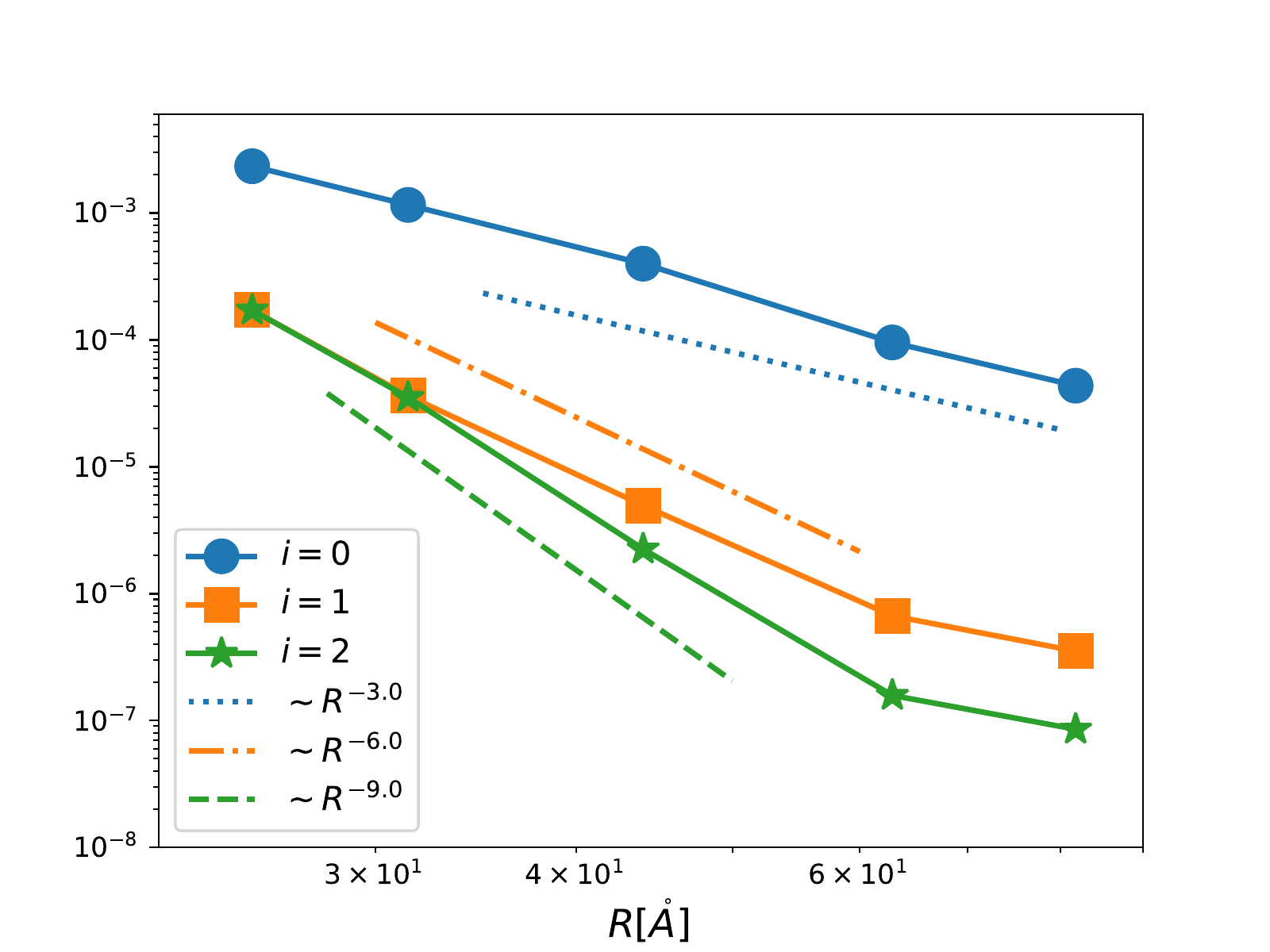}}
    \subfloat[Micro-crack-3 \label{fig:sub:conv_E_micro3}]{
    \includegraphics[height=4.5cm]{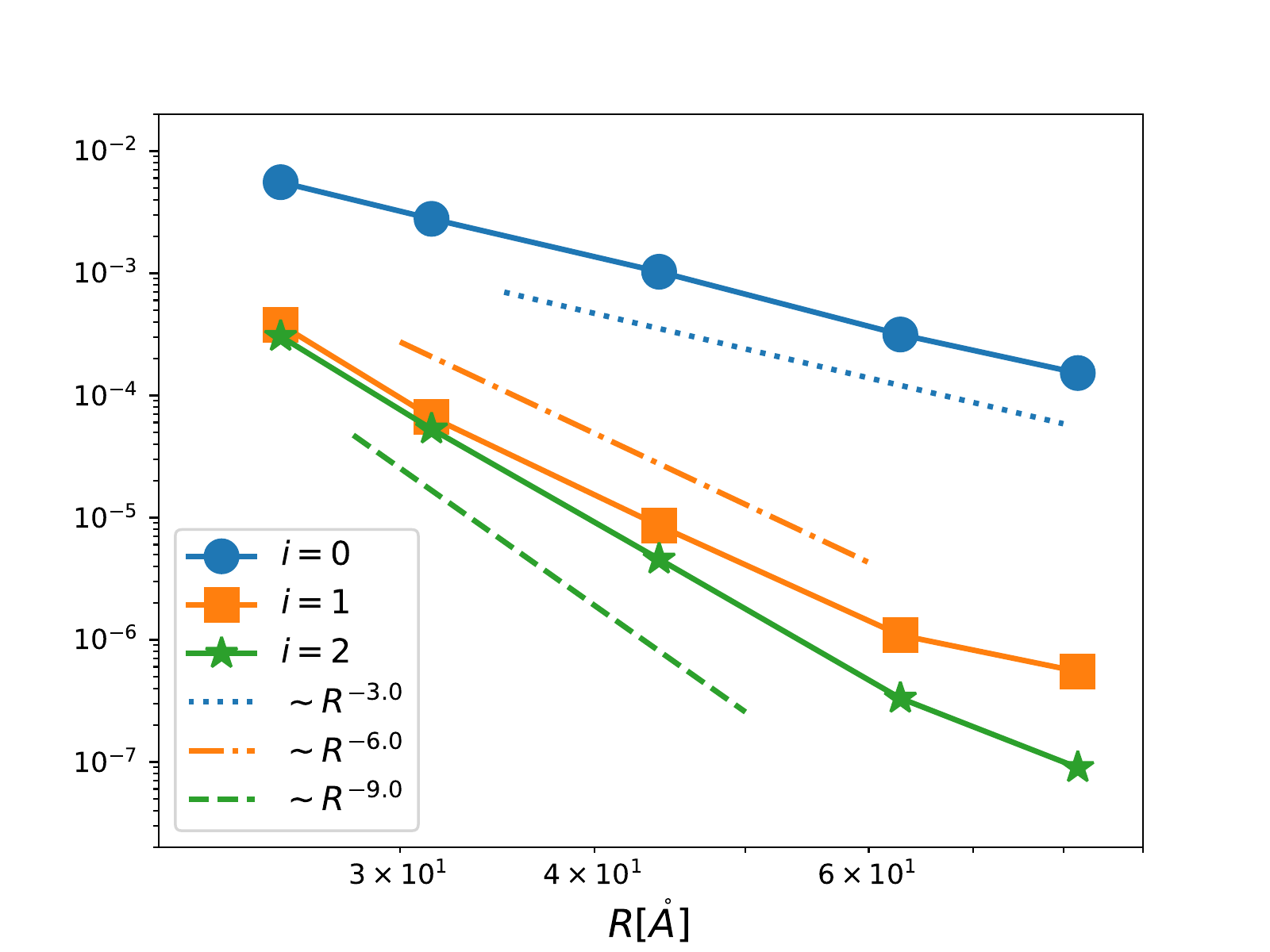}}
    \caption{Convergence of energy error $\big|\mathcal{E}(\bar{u})-\mathcal{E}(\bar{u}_{i, R})\big|$ for $i=0,1,2$ against domain size $R$.}
    \label{figs:conv_E}
\end{figure}

\section{Conclusion}
\label{sec:conclusion}
We presented a novel numerical scheme designed to improve the convergence of traditional cell approximations in defect simulations by constructing higher-order boundary conditions through multipole expansions of defect equilibrium. Our approach involves an iterative method designed to systematically improve the accuracy of approximate multipole tensor evaluations. This is complemented by rigorous error estimates. 
An interesting side-result is that our analysis also provides error estimates for the defect dipole tensor, an important quantity in materials defect modelling.
Notably, the developed approach inherently covers the anisotropic case and facilitates extensions to higher-order multipole tensors.
To evaluate the effectiveness of our approach, we presented numerical examples 
assessing both the geometry error and energy error convergence. Our results demonstrate that our numerical scheme achieves, in practice, accelerated convergence rates with respect to computational cell size. 

Our presentation here is restricted to simple lattices and point defects. Adapting the method to pure screw dislocations appears relatively straightforward (though it requires the solution of higher-order continuum PDE), but further generalisations do require additional technical difficulties to be overcome. However, there appears to be no fundamental limitation to extend the method and the results to multi-lattices and a range of other defects in some form.

\section{Proofs and Extensions}
\label{sec:proofs}

In this section, we provide proofs and extensions to complement the main numerical schemes discussed in Section~\ref{sec:moments}. These additional analyses aim to support the theoretical foundations and provide deeper insights into the key concepts and methodologies introduced in this paper.

\subsection{Proofs of the results in Section \ref{sec:moments}}
\label{sec:sub:proofs3}

In this section, we give the detailed proofs of the error estimates presented in Section~\ref{sec:moments}. First of all, we prove the Theorem~\ref{th:galerkin}.

\begin{proof}[Proof of Theorem \ref{th:galerkin}]
The existence and the error estimate for the geometry error directly follows from \cite[Theorem 3.7]{2021-defectexpansion} with $b^{(i,k)} = b^{(i,k)}_{\rm exact}$. Next, we provide the error estimate for the energy error. Since $\mathcal{E}$ is twice differentiable along the segment $\{(1-s)\bar{u}+s \bar{u}_{p,R}~|~ s\in(0,1)\}$, we have
\begin{align}
    \big|\mathcal{E}(\bar{u})-\mathcal{E}(\bar{u}_{p, R})\big| &= \Big| \int_{0}^{1} \big\<\delta \mathcal{E}\big( (1-s)\bar{u}+s \bar{u}_{p, R} \big), \bar{u}-\bar{u}_{p, R}\big\> \ds \Big| \nonumber \\[1ex]
    &\leq C M \big\| D\bar{u} - D\bar{u}_{p,R} \big\|_{\ell^2}^2,
\end{align}
where $M$ is the uniform Lipschitz constant of $\delta \mathcal{E}$. This yields the stated results.
\end{proof}

Next, we give the proof of Lemma~\ref{th:galerkinfixedb}, which follows a similar structure to that of Theorem~\ref{th:galerkin} but additionally considering the difference of moments. 

\begin{proof}[Proof of Lemma~\ref{th:galerkinfixedb}]
We first divide the target geometry error $\big\| D\bar{u} - D\bar{u}_{b,R} \big\|_{\ell^2}$ into two parts:
\begin{eqnarray}\label{eq:DubR_split}
\big\| D\bar{u} - D\bar{u}_{b,R} \big\|_{\ell^2} \leq \big\| D\bar{u} - D\bar{u}_{p,R} \big\|_{\ell^2} + \big\| D\bar{u}_{p, R} - D\bar{u}_{b,R} \big\|_{\ell^2}.
\end{eqnarray}
The first part can be directly bounded by the estimates in Theorem~\ref{th:galerkin}
\[
\big\| D\bar{u} - D\bar{u}_{p,R} \big\|_{\ell^2} \lesssim \cdot 
     R^{- d/2 - p} \cdot \log^{p+1}(R).
\]
For the second part, we note that the solutions lie in the exact and approximate moment space defined by \eqref{eq:exact_space} and \eqref{eq:approx_space}. It is shown in \cite[Lemma 6.2]{EOS2016} that lattice Green's functions satisfy $\big|D^i \mathcal{G}(\ell)\big|\leq C(1+|\ell|)^{-d-i+2}$. Hence, we can obtain
\[
\big\| D\bar{u}_{p, R} - D\bar{u}_{b,R} \big\|_{\ell^2} \lesssim \sum_{i=1}^p \lvert b^{(i,\cdot)} - b_{\rm exact}^{(i,\cdot)} \rvert \cdot \Big(\sum_{|\ell|>R}\big|D^i\mathcal{G}(\ell)\big|^2\Big)^{1/2} \lesssim \sum_{i=1}^p \lvert b^{(i,\cdot)} - b_{\rm exact}^{(i,\cdot)} \rvert \cdot R^{1-d/2-i}.
\]
Taking into account the two estimates above with the decomposition \eqref{eq:DubR_split}, one can yield the stated results.
\end{proof}

We are ready to prove Theorem~\ref{thm:moments}. Our goal is to analyze the moment error $\big \lvert \mathcal{I}_{i,R}[\bar{u}_{b}] - \mathcal{I}_{i}[\bar{u}] \big \rvert$ for $i=1,\ldots,p$ in terms of the domain size $R$ for the purpose of estimating the unspecified errors $\big \lvert b^{(i,\cdot)}-b_{\rm exact}^{(i,\cdot)}\big \rvert$ shown in Lemma~\ref{th:galerkinfixedb}.  

\begin{proof}[Proof of Theorem~\ref{thm:moments}]
We first split the moment error into two parts:
\begin{eqnarray}\label{eq:sp_Idiff}
\big \lvert \mathcal{I}_{i,R}[\bar{u}_{b,R}] - \mathcal{I}_{i}[\bar{u}] \big \rvert \leq \big\lvert \mathcal{I}_{i,R}[\bar{u}] - \mathcal{I}_{i}[\bar{u}] \big\rvert + \big \lvert \mathcal{I}_{i,R}[\bar{u}_{b,R}] - \mathcal{I}_{i,R}[\bar{u}] \big \rvert =: I_1 + I_2.
\end{eqnarray}

To estimate $I_1$, after a straightforward manipulation, we can obtain
\begin{align}\label{eq:I1}
\big\lvert \mathcal{I}_{i,R}[\bar{u}] - \mathcal{I}_{i}[\bar{u}] \big\rvert &\lesssim \sum_{\lvert \ell \rvert > R/3} \lvert \ell \rvert^{-d-p-1+i} \cdot \log^{p}(\lvert \ell \rvert)\nonumber \\[1ex]
&\lesssim R^{i-p-1} \cdot \log^{p}(R),
\end{align}
where the last inequality follows from the estimates in \cite[Lemma 7.1]{2021-defectexpansion}.

For the term $I_2$, note that both $\bar{u}$ and $\bar{u}_{b,R}$ solve the discrete force equilibrium equations for $\lvert \ell \rvert < 2R/3$ (actually $\lvert \ell \rvert<R-R_0$ is already sufficient), hence we have
\begin{align}\label{eq:Hdiff}
H [\bar{u}_{b,R}](\ell) - H [\bar{u}](\ell) = \int_0^1 (1-t)\Div \big( &\nabla^3 V(tD\bar{u}_{b,R})[D\bar{u}_{b,R}, D\bar{u}_{b,R}] \nonumber \\
&- \nabla^3 V(tD\bar{u})[D\bar{u}, D\bar{u}]\big) \dt.
\end{align}
Before inserting the above term into the moment sum \eqref{eq:results:defn_Ij}, we first estimate the stress difference in \eqref{eq:Hdiff}. Note that for any bounded Lipschitz function $f$ and any sub-multiplicative norm and product:
\[
\big\lvert f(z) \cdot z \cdot z - f(x) \cdot x \cdot x \big\rvert \lesssim \lvert z-x \rvert (\lvert x \rvert + \lvert z-x \rvert) (1+\lvert x \rvert + \lvert z-x \rvert). 
\]
Hence, the stress difference in \eqref{eq:Hdiff} can be further estimated by
\begin{align*}
&\big\lvert \nabla^3 V(tD\bar{u}_{b,R}) [D\bar{u}_{b,R}, D\bar{u}_{b,R}] - \nabla^3 V(tD\bar{u})[D\bar{u}, D\bar{u}] \big\rvert \\[1ex]
\lesssim& \lvert D\bar{u}_{b,R} - D\bar{u} \rvert \cdot \big(\lvert D\bar{u} \rvert+\lvert D\bar{u}_{b,R} - D\bar{u} \rvert\big) \cdot \big(1+\lvert D\bar{u} \rvert+\lvert D\bar{u}_{b,R} - D\bar{u} \rvert\big) \\[1ex]
\lesssim& \lvert D\bar{u}_{b,R} - D\bar{u} \rvert \cdot \big(\lvert \ell \rvert^{-d}+\lvert D\bar{u}_{b,R} - D\bar{u} \rvert\big),
\end{align*}
where the last inequality follows from the decay estimate of the equilibrium for point defects~\cite[Theorem 1]{EOS2016}.
Inserting that into the moment sum after partially summing, we obtain
\begin{align}\label{eq:I2}
    &\big\lvert \mathcal{I}_{i,R}[\bar{u}_{b,R}] - \mathcal{I}_{i,R}[\bar{u}] \big\rvert \nonumber \\[1ex]
    \lesssim&~\int_0^1 (1-t) \sum_{\ell \in \Lambda \cap B_{2R/3}({\bf 0})} \lvert \ell \rvert^{i-1} \cdot \big\lvert \nabla^3 V(tD\bar{u}_{b,R}) [D\bar{u}_{b,R}, D\bar{u}_{b,R}] - \nabla^3 V(tD\bar{u})[D\bar{u}, D\bar{u}] \big\rvert \dt \nonumber \\[1ex]
    \lesssim&~\sum_{\ell \in \Lambda \cap B_{2R/3}({\bf 0})} \lvert \ell \rvert^{i-1} \cdot \lvert D\bar{u}_{b,R} - D\bar{u} \rvert \cdot \big(\lvert \ell \rvert^{-d}+\lvert D\bar{u}_{b,R} - D\bar{u} \rvert\big) \nonumber \\[0.5ex]
    \lesssim&~\lVert D\bar{u}_{b,R} - D\bar{u} \rVert_{\ell^2} \cdot \Bigg(R^{i-1}\lVert D\bar{u}_{b,R} - D\bar{u} \rVert_{\ell^2} + \Bigg(\sum_{\ell \in \Lambda \cap B_{2R/3}({\bf 0})} \lvert \ell \rvert^{i-2-2d}\Bigg)^{1/2}\Bigg),
\end{align}
where the last term can be further estimated by
\begin{equation*}
    \Bigg(\sum_{\ell \in \Lambda \cap B_{2R/3}({\bf 0})} \lvert \ell \rvert^{2i-2-2d}\Bigg)^{1/2} \lesssim \alpha_i(R) := \begin{cases}
          1 &~i<1+d/2 \\
          \log(R) &~i=1+d/2 \\
          R^{i-1-d/2} &~i>1+d/2 
       \end{cases}.
\end{equation*}
Combing \eqref{eq:sp_Idiff}, \eqref{eq:I1} with \eqref{eq:I2}, we obtain
\begin{equation}\label{eq:momenterror_}
    \big\lvert \mathcal{I}_{i,R}[\bar{u}_{b,R}] - \mathcal{I}_{i}[\bar{u}] \big\rvert \lesssim R^{i-p-1} \log^p(R) + R^{i-1} \big\lVert D\bar{u}_{b,R} - D\bar{u} \rVert_{\ell^2}^2 + \alpha_i(R) \lVert D\bar{u}_{b,R} - D\bar{u} \rVert_{\ell^2},
\end{equation}
which yields the stated result.
\end{proof}

\subsection{The relationship between force moments, discrete and continuous coefficients}
\label{sec:sub:abIrelation}

In this section, we establish the explicit relationship between force moments and coefficients in both discrete and continuous expansions (cf.~\eqref{eq:exp_b} and \eqref{eq:exp_a}). We focus on the case of three-dimensional systems ($d=3$) and consider up to the third order expansions ($p=3$). 

Applying Theorem~\ref{thm:pointdef} and Lemma~\ref{thm:structurewithmomentscont}, the decomposition of the equilibrium $\bar{u}$ in both discrete and continuous versions can be written as 
\begin{align}\label{eq:decomp_u_3}
\bar{u} &=~\sum_{k=1}^3 b^{(1,k)}:D_{\mathcal{S}} (\mathcal{G})_{\cdot k} + \sum_{k=1}^3 b^{(2,k)}:D^2_{\mathcal{S}} (\mathcal{G})_{\cdot k} + \sum_{k=1}^3 b^{(3,k)}:D^3_{\mathcal{S}} (\mathcal{G})_{\cdot k} + O(\lvert \ell \rvert^{-5}), \nonumber \\[1ex]
&=~\Bigg( \sum_{k=1}^3 a^{(1,0,k)}:\nabla (G_0)_{\cdot k} + \sum_{k=1}^3 a^{(1,1,k)}:\nabla (G_1)_{\cdot k}\Bigg) + \sum_{k=1}^3 a^{(2,0,k)}:\nabla^2 (G_0)_{\cdot k} \nonumber \\
&\quad + \sum_{k=1}^3 a^{(3,0,k)}:\nabla^3 (G_0)_{\cdot k} + O(\lvert \ell \rvert^{-5}) \nonumber \\[1ex]
&=:~u_1 + u_2 + u_3 + O(\lvert \ell \rvert^{-5}).
\end{align}

\subsubsection{The relationship between the continuous coefficients $a^{(i,n,k)}$ and the discrete coefficients $b^{(i,k)}$} In order to derive the identities shown in \eqref{eq:a-b}, we first Taylor expand the discrete difference stencil as well as its higher-order discrete difference at site $\ell$, for any $\rho,\sigma,\tau \in \mathcal{R}$,
\begin{align*}
  D_\rho \mathcal G &= \nabla_\rho G_0 + \nabla_\rho G_1 + \frac{1}{2} \nabla^2_{\rho\rho} G_0 + \frac{1}{6} \nabla^3_{\rho\rho\rho} G_0 + {\rm h.o.t.}, \\[1ex]
  D^2_{\rho\sigma} \mathcal G &= \nabla^2_{\rho\sigma} G_0 + \frac{1}{2} \nabla^3_{\rho\sigma\sigma} G_0 +  \frac{1}{2} \nabla^3_{\rho\rho\sigma} G_0 + {\rm h.o.t.}, \\[1ex]
  D^3_{\rho\sigma\tau} \mathcal G &= \nabla^3_{\rho\sigma\tau} G_0 +  {\rm h.o.t.}
\end{align*}

 For the first term $u_1$ in \eqref{eq:decomp_u_3}, by matching the first-order terms and applying the above identities, we can obtain
\begin{align*}
&\sum_{j,k=1}^3 \Big( (a^{(1,0,k)})_{\cdot j} \cdot \partial_j (G_0)_{\cdot k} + (a^{(1,1,k)})_{\cdot j} \cdot \partial_j (G_1)_{\cdot k} \Big) \\[1ex]
=~&\sum_{k=1}^3 \sum_{\rho\in\mathcal{R}} (b^{(1,k)})_\rho \cdot D_\rho \mathcal{G}_{\cdot k} + {\rm h.o.t.}\\[1ex]
=~&\sum_{j,k=1}^3 \sum_{\rho\in\mathcal{R}} \Big( (b^{(1,k)})_\rho \cdot \partial_j (G_0)_{\cdot k} \cdot \rho_j + (b^{(1,k)})_\rho \cdot \partial_j (G_1)_{\cdot k} \cdot \rho_j \Big). 
\end{align*}
Hence, for each $j,k=1,2,3$,
\begin{eqnarray}\label{a-b-1}
(a^{(1,0,k)})_{\cdot j} = (a^{(1,1,k)})_{\cdot j} = \sum_{\rho\in\mathcal{R}} (b^{(1,k)})_\rho \cdot \rho_j.
\end{eqnarray}

To get the second-order coefficients ($i=2$), it is necessary to consider the term $u_2$ as well as the square term in $u_1$, that is,
\begin{align*}
\sum_{j,m,k=1}^3 (a^{(2,0,k)})_{\cdot jm} \cdot \partial^2_{jm} (G_0)_{\cdot k} =&~\sum_{k=1}^3 \sum_{\rho, \sigma \in \mathcal{R}} (b^{(2,k)})_{\rho\sigma} \cdot D^2_{\rho\sigma} \mathcal{G}_{\cdot k} +  \sum_{k=1}^3 \sum_{\rho\in\mathcal{R}} (b^{(1,k)})_\rho \cdot  \Big(\frac{1}{2} \nabla^2_{\rho\rho} G_0 \Big)_{\cdot k} + {\rm h.o.t.} \\[1ex]
=&~\sum_{j,m,k=1}^3 \sum_{\rho, \sigma \in \mathcal{R}} (b^{(2,k)})_{\rho\sigma} \cdot \partial^2_{jm} (G_0)_{\cdot k} \cdot \rho_j \sigma_m \\
&+ \frac{1}{2} \sum_{j,m,k=1}^3 \sum_{\rho\in\mathcal{R}} (b^{(1,k)})_\rho \cdot \partial^2_{jm} (G_0)_{\cdot k} \cdot \rho_j\rho_m.
\end{align*}
Hence, for each $j, m, k=1,2,3$, we have
\begin{align}\label{a-b-2}
(a^{(2,0,k)})_{\cdot jm} =& \sum_{\rho, \sigma \in \mathcal{R}} (b^{(2,k)})_{\rho\sigma} \cdot \rho_j \sigma_m + \frac{1}{2} \sum_{\rho\in\mathcal{R}} (b^{(1,k)})_\rho \cdot \rho_j\rho_m \nonumber \\[1ex]
=&  \sum_{\rho, \sigma \in \mathcal{R}} (b^{(2,k)})_{\rho\sigma} \cdot \rho_j \sigma_m + \frac{1}{2} \sum_{\rho\in \mathcal{R}} (a^{(1,0,k)})_{\cdot j} \cdot \rho_m .
\end{align}

Analogously, after a tedious calculation, we obtain the following identity
\begin{align*}
  &\sum_{j,m,n,k=1}^3 (a^{(3,0,k)})_{\cdot jmn} \cdot \partial^3_{jmn} (G_0)_{\cdot k} \\[1ex]
  =&~\sum_{k=1}^3 \sum_{\rho,\sigma,\tau \in \mathcal{R}} (b^{(3,k)})_{\rho\sigma\tau} \cdot D^3_{\rho\sigma\tau} \mathcal{G}_{\cdot k} 
  + \sum_{k=1}^3 \sum_{\rho,\sigma\mathcal{R}} (b^{(2,k)})_{\rho\sigma} \cdot \Big(\frac{1}{2} \nabla^3_{\rho\sigma\sigma} G_0 + \frac{1}{2} \nabla^3_{\rho\rho\sigma} G_0 \Big)_{\cdot k} \\
  & + \sum_{k=1}^3 \sum_{\rho\in\mathcal{R}} (b^{(1,k)})_\rho \cdot \Big(\frac{1}{6} \nabla^3_{\rho\rho\rho} G_0 \Big)_{\cdot k} + {\rm h.o.t.}  \\[1ex]
  =& \sum_{j,m,n,k=1}^3 \sum_{\rho,\sigma,\tau \in\mathcal{R}} (b^{(3,k)})_{\rho\sigma\tau} \cdot \partial^3_{jmn} (G_0)_{\cdot k} \cdot \rho_j \sigma_m \tau_n\\
  & + \frac{1}{2} \sum_{j,m,n,k=1}^3 \sum_{\rho,\sigma\in\mathcal{R}} (b^{(2,k)})_{\rho\sigma} \cdot \partial^3_{jmn} (G_0)_{\cdot k} \cdot (\rho_j\sigma_m\sigma_n + \rho_j\rho_m\sigma_n) \\  
  &+ \frac{1}{6} \sum_{j,m,n,k=1}^3 \sum_{\rho\in\mathcal{R}} (b^{(1,k)})_\rho \cdot \partial^3_{jmn} (G_0)_{\cdot k} \cdot \rho_j\rho_m\rho_n.
  \end{align*}
Therefore, if we denote $a^{(3,0,k)}$ as a collection of $(a^{(3,0,k)})_{\cdot jmn}$ for all $j,m,n=1,2,3$, then for each $k=1,2,3$, we can obtain 
\begin{align}\label{a-b-3}
  a^{(3,0,k)} =& \sum_{\rho,\sigma,\tau \in\mathcal{R}} (b^{(3,k)})_{\rho\sigma\tau} \cdot \rho \otimes \sigma \otimes \tau + \sum_{\rho,\sigma \in \mathcal{R}} (b^{(2,k)})_{\rho\sigma} \cdot \rho \odot \sigma \odot \sigma \nonumber \\
  &+ \frac{1}{6} \sum_{\rho \in \mathcal{R}} (b^{(1,k)})_{\rho} \cdot \rho \otimes \rho \otimes \rho.
\end{align}
  
Taking into account \eqref{a-b-1}, \eqref{a-b-2} with \eqref{a-b-3}, we can acquire the relationship between $a^{(i,n,k)}$ and $b^{(i,k)}$ presented in \eqref{eq:a-b}.

\subsubsection{The relation between the force moments $\mathcal{I}_i[\bar{u}]$ and the discrete coefficients $b^{(i,k)}$} Next, we give a concrete formulation of \eqref{eq:bIrelation}. From the definition of force moments, for $i=1,2,3$, we have
\begin{eqnarray}\label{eq:I}
\mathcal{I}_i[\bar{u}] = \sum_{\ell \in \La} H [\bar{u}](\ell) \otimes \ell^{\otimes i}.
\end{eqnarray}
The relation between moments and discrete coefficients is derived based on \eqref{eq:I}. As a matter of fact, recalling the decomposition \eqref{eq:decomp_u_3} and the fact that $H[\mathcal{G}_{\cdot k}](\ell) = \delta_{0,\ell} e_k$, we can obtain
\begin{align}\label{eq:bI}
H[u_{1}] =& H\Bigg[\sum_{k=1}^3 b^{(1,k)}:D_{\mathcal{S}} (\mathcal{G})_{\cdot k}\Bigg] = \sum_{k=1}^3 \sum_{\rho\in\mathcal{R}} b^{(1,k)}_{\rho} \cdot D_{\rho}\delta_0  e_k, \nonumber \\[1ex]
H[u_{2}] =& H\Bigg[\sum_{k=1}^3 b^{(2,k)}:D^2_{\mathcal{S}} (\mathcal{G})_{\cdot k}\Bigg] = \sum_{k=1}^3 \sum_{\rho, \sigma \in \mathcal{R}} (b^{(2,k)})_{\rho\sigma} \cdot D^2_{\rho\sigma} \delta_0  e_k, \\[1ex]
H[u_{3}] =& H\Bigg[\sum_{k=1}^3 b^{(3,k)}:D^3_{\mathcal{S}} (\mathcal{G})_{\cdot k}\Bigg] = \sum_{k=1}^3 \sum_{\rho, \sigma, \tau \in \mathcal{R}} (b^{(2,k)})_{\rho\sigma\tau} \cdot D^3_{\rho\sigma\tau} \delta_0 e_k. \nonumber
\end{align}
Hence, combining \eqref{eq:I} with \eqref{eq:bI}, one can acquire that
\begin{align}\label{eq:b-I-1}
\mathcal{I}_1[\bar{u}] &= \sum_{k=1}^3 \sum_{\rho \in \mathcal{R}}\sum_{\ell\in\Lambda} (b^{(1,k)})_{\rho} \cdot D_{\rho} \delta_0 e_k \otimes \ell \nonumber  \\[1ex]
&= - \sum_{k=1}^3 \sum_{\rho\in\mathcal{R}} (b^{(1,k)})_{\rho} \cdot e_k \otimes \rho \nonumber  \\[1ex]
&= - \sum_{j,k=1}^3 \sum_{\rho\in\mathcal{R}} (b^{(1,k)})_{\rho} \cdot \rho_j \cdot e_k \otimes e_j,
\end{align}
which yields the relation between the first order coefficient $b^{(1,\cdot)}$ and dipole moment $\mathcal{I}_1[\bar{u}]$.

As for the tripole moment $\mathcal{I}_2[\bar{u}]$, similarly we have
\begin{align}\label{eq:b-I-2}
\mathcal{I}_2[\bar{u}] =& \sum_{k=1}^3 \sum_{\rho\in\mathcal{R}}\sum_{\ell\in\Lambda} (b^{(1,k)})_{\rho} \cdot D_{\rho} \delta_0  e_k \otimes \ell \otimes \ell + \sum_{k=1}^3 \sum_{\rho, \sigma \in \mathcal{R}} \sum_{\ell \in \Lambda} (b^{(2,k)})_{\rho\sigma} \cdot D^2_{\rho\sigma} \delta_0  e_k \otimes \ell \otimes \ell \nonumber \\[1ex]
=& \sum_{k=1}^3 \sum_{\rho\in\mathcal{R}} (b^{(1,k)})_{\rho} e_k \otimes \rho \otimes \rho + 2 \sum_{k=1}^3 \sum_{\rho, \sigma \in\mathcal{R}} (b^{(2,k)})_{\rho\sigma} \cdot e_k \otimes \rho \otimes \sigma  \nonumber \\[1ex] 
=& \sum_{j,m,k=1}^3 \sum_{\rho\in\mathcal{R}} (b^{(1,k)})_{\rho} \cdot \rho_j \rho_m e_k \otimes e_j \otimes e_m + 2\sum_{j,m,k=1}^3 \sum_{\rho, \sigma \in \mathcal{R}} (b^{(2,k)})_{\rho\sigma} \cdot \rho_j \sigma_m \cdot e_k \otimes e_j \otimes e_m.
\end{align}
For the quadrupole moment $\mathcal{I}_3[\bar{u}]$, after a tedious calculation, we can obtain
\begin{align}\label{eq:b-I-3}
\mathcal{I}_3[\bar{u}] =& \sum_{k=1}^3 \sum_{\rho \in \mathcal{R}}\sum_{\ell\in\Lambda} (b^{(1,k)})_{\rho} \cdot D_{\rho} \delta_0 e_k \otimes \ell \otimes \ell \otimes \ell + \sum_{k=1}^3 \sum_{\rho, \sigma\in\mathcal{R}} \sum_{\ell\in\mathcal{R}} (b^{(2,k)})_{\rho\sigma}\cdot D^2_{\rho\sigma} \delta_0  e_k \otimes \ell \otimes \ell \otimes \ell  \nonumber \\
 &+ \sum_{k=1}^3 \sum_{\rho, \sigma, \tau\in \mathcal{R}} \sum_{\ell\in\Lambda} (b^{(3,k)})_{\rho\sigma\tau} \cdot D^3_{\rho\sigma\tau} \delta_0  e_k \otimes \ell \otimes \ell \otimes \ell \nonumber \\[1ex]
 =& \sum_{k=1}^3 \sum_{\rho\in\mathcal{R}} \sum_{\ell\in\Lambda}(b^{(1,k)})_{\rho} \cdot e_k \otimes D_{-\rho} \Big( \ell \otimes \ell \otimes \ell\Big) \nonumber \\
 &+ \sum_{k=1}^3 \sum_{\rho, \sigma\in\mathcal{R}} \sum_{\ell\in\Lambda} (b^{(2,k)})_{\rho\sigma} \cdot e_k \otimes D_{-\rho}D_{-\sigma} \Big(\ell \otimes \ell \otimes \ell \Big)  \nonumber \\
 &+ \sum_{k=1}^3 \sum_{\rho, \sigma, \tau\in\mathcal{R}} \sum_{\ell\in\Lambda} (b^{(3,k)})_{\rho\sigma\tau}\cdot e_k \otimes D_{-\rho}D_{-\sigma}D_{-\tau}\Big( \ell \otimes \ell \otimes \ell \Big) \nonumber \\[1ex]
 =& -\sum_{k=1}^3 \sum_{\rho\in\mathcal{R}} (b^{(1,k)})_{\rho} \cdot e_k \otimes \rho \otimes \rho \otimes \rho  - 6 \sum_{k=1}^3 \sum_{\rho, \sigma\in\mathcal{R}} (b^{(2,k)})_{\rho\sigma} \cdot e_k \otimes \rho \odot \sigma \odot \sigma  \nonumber \\ 
&- 6 \sum_{k=1}^3 \sum_{\rho, \sigma, \tau\in\mathcal{R}} (b^{(3,k)})_{\rho\sigma\tau} \cdot e_k \otimes \rho \odot \sigma \odot \tau.
\end{align}
Hence, we yield the results in \cite[Lemma 5.6]{2021-defectexpansion} or \eqref{eq:bIrelation}.

\subsubsection{The relationship between the force moments $I_i[\bar{u}]$ and the continuous coefficients $a^{(i,n,k)}$} Combining the results presented above, we then derive the relation between the moments $I_i[\bar{u}]$ and the continuous coefficients $a^{(i,n,k)}$. More precisely, for the dipole moment, by inserting \eqref{a-b-1} and \eqref{a-b-2} into \eqref{eq:b-I-1}, we have
\[
\mathcal{I}_1[\bar{u}] = - \sum_{j,k=1}^3 \sum_{\rho\in\mathcal{R}} (b^{(1,k)})_{\rho} \cdot \rho_j \cdot e_k \otimes e_j = - a^{(1,0,\cdot)} = - a^{(1,1,\cdot)}.
\]
Similarly, taking into account \eqref{a-b-2} with \eqref{eq:b-I-2}, we get
\begin{align*}
\mathcal{I}_2[\bar{u}] =& \sum_{j,m,k=1}^3 \sum_{\rho\in\mathcal{R}} (b^{(1,k)})_{\rho} \cdot \rho_j \rho_m \cdot e_k \otimes e_j \otimes e_m  \nonumber \\
&+ 2\sum_{j,m,k=1}^3 \sum_{\rho, \sigma\in\mathcal{R}} (b^{(2,k)})_{\rho\sigma} \cdot \rho_j \sigma_m \cdot e_k \otimes e_j \otimes e_m \nonumber \\[1ex]
=& \sum_{j,m,k=1}^3 \Big(\sum_{\rho\in\mathcal{R}} (a^{(1,0,k)})_{\cdot j} \cdot \rho_m \Big) e_k \otimes e_j \otimes e_m \nonumber \\
&+ \sum_{j,m,k=1}^3  \Big( 2 (a^{(2,0,k)})_{\cdot jm}  - \sum_{\rho\in\mathcal{R}} (a^{(1,0,k)})_{\cdot j} \cdot \rho_m \Big) e_k \otimes e_j \otimes e_m \nonumber \\[1ex]
=&~2 a^{(2,0,\cdot)}.
\end{align*}
As for the third order term (quadrupole moment), adding \eqref{a-b-3} into \eqref{eq:b-I-3} and after a direct algebraic manipulation, one can acquire that
\begin{align}
\mathcal{I}_3[\bar{u}] = -6 a^{(3,0,\cdot)}.
\end{align}

Hence, we obtain the practical formulation of continuous coefficients for $i=1,2,3$ via force moments 
\begin{align}
    a^{(1,0,\cdot)} &= -\mathcal{I}_1[\bar{u}], \quad a^{(1,1,\cdot)} = -\mathcal{I}_1[\bar{u}], \nonumber \\[1ex]
    a^{(2,0,\cdot)} &= \frac{1}{2}\mathcal{I}_2[\bar{u}], \quad 
    a^{(3,0,\cdot)} = -\frac{1}{6}\mathcal{I}_3[\bar{u}],
\end{align}
which can be applied directly in our main algorithm (cf. Algorithm~\ref{alg:moment_iter_a}).

\subsection{The computation of continuous Green's functions}
\label{sec:sub:gf}

In this section, we provide a brief overview of the implementation of continuous Green's functions. These Green's functions offer several advantages, including the ability to deduce regularity, decay, and even homogeneity \cite{2021-defectexpansion}. Importantly, the representation is computationally promising, as it only requires a finite surface integral in Fourier space for evaluation.

The continuous Green's functions and their higher order correction can be expressed by applying the Morrey formula (cf.~\cite[Section 6.4]{2021-defectexpansion}), that is, for $n\geq1$ and $\ell \neq 0$ 
\begin{equation} \label{eq:morreykernel}
G_{n}(\ell) = (-\Delta)^P \frac{c_{\rm vol}}{(2 \pi)^d}  \int_{\mathbb{S}^{d-1}} \mathcal{A}_{2n-2}(\sigma) J_{-1-h}(\ell \cdot \sigma) \dsigma,
\end{equation}
where $P=\lceil \frac{d+2n-1}{2}\rceil$, $\Delta$ is the Laplacian, and
\begin{align*}
J_t(w) &= t! (-iw)^{-t-1}, &\text{ for } t \geq 0\\[1ex]
J_t(w) &= \frac{1}{(-t-1)!} (iw)^{-t-1}\Big(-\log(-iw) + \sum_{j=1}^{-t-1} j^{-1}\Big), &\text{ for } t < 0,
\end{align*}
where $J_t'(w)=iJ_{t+1}(w)$ is satisfied. We only consider $d=3$ throughout this paper and the results for $d=2$ could be obtained similarly. In particular, $G_0$ reads
\begin{equation}\label{eq:g0_}
G_{0}(\ell) = (-\Delta) \frac{c_{\rm vol}}{(2 \pi)^3}  \int_{\mathbb{S}^{2}} \mathcal{A}_{-2}(\sigma) J_{-2}(\ell \cdot \sigma) \dsigma.
\end{equation}
Note that $\mathcal{A}_{-2} = \widehat{H}_2^{-1}$ where $\widehat{H}_{2n}^{-1}$, the inverse of the discrete lattice operator Fourier multiplier~\cite{braun2016existence, hudson2012stability}, is explicitly defined in~\cite[Section 6.2]{2021-defectexpansion}. For real $w$ we have the identity
\[{\rm Re} J_{-2}(w) =  (-iw) i {\rm Arg}(-iw) = - \pi/2 \lvert w \rvert.\]
Hence, one can further write \eqref{eq:g0_} as 
\begin{equation}
G_{0}(\ell) = \frac{c_{\rm vol}}{16 \pi^2}  \Delta  \int_{\mathbb{S}^{2}} \mathcal{A}_{-2}(\sigma) \lvert \ell \cdot \sigma \rvert \dsigma.
\end{equation}

As discussed in \cite[Section 6]{2021-defectexpansion}, the integral defines a $C^\infty$ function. One can easily evaluate the Laplacian in the distributional sense. We thus obtain
\begin{equation}\label{eq:G0}
G_{0}(\ell) = \frac{c_{\rm vol}}{8 \pi^2}  \int_{\mathbb{S}^{2} \cap \{\sigma \cdot x =0\}} \mathcal{A}_{-2}(\sigma) \dsigma,
\end{equation}
which yields a rigorous derivation of Barnett's formula~\cite{barnett1972precise}.

Applying \eqref{eq:morreykernel} as well as the similar calculation as that on $G_0(\ell)$, the first order correction $G_1(\ell)$ can be rewritten as 
\begin{align}\label{eq:G1}
  G_{1}(\ell) &= (\Delta)^2 \frac{c_{\rm vol}}{(2 \pi)^3}  \int_{\mathbb{S}^{2}} \mathcal{A}_{0}(\sigma) J_{-2}(\ell \cdot \sigma) \dsigma \nonumber \\[1ex]
  &= \frac{c_{\rm vol}}{8 \pi^2} \Delta  \int_{\mathbb{S}^{2} \cap \{\sigma \cdot x =0\}} \mathcal{A}_{0}(\sigma) \dsigma,
\end{align}
where $\mathcal{A}_0 := -  \widehat{H}_2^{-1} \widehat{H}_4 \widehat{H}_2^{-1}$. 

In practical implementation, we make full use of \eqref{eq:G0} and \eqref{eq:G1} to obtain the continuous Green's function $G_0$ and its first-order correction $G_1$. As discussed in the main context, the higher-order derivatives of $G_0$ are obtained using the automatic differentiation~\cite{revels2016forward}.

\appendix
\renewcommand\thesection{\appendixname~\Alph{section}}

\bibliographystyle{plain}
\bibliography{bib}

\end{document}